\newcommand{\R}{\mathbb{R}}
\newcommand{\E}{\mathbb{E}}
\newcommand{\PP}{\mathbb{P}}
\newcommand{\N}{\mathbb{N}}
\newcommand{\Z}{\mathbb{Z}}
\newcommand{\F}{\mathbb{F}}
\newcommand{\1}{\mathbbm{1}}
\renewcommand{\P}{\mathbb P}
\newcommand{\g}{\mathcal G}
\newcommand{\f}{\mathcal F}
\newcommand{\toop}{\stackrel{\PP}{\longrightarrow}}
\newcommand{\schw}{\stackrel{d}{\longrightarrow}}
\newcommand{\eqschw}{\stackrel{d}{=}}
\newcommand{\stab}{\stackrel{\mathcal{L}-s}{\longrightarrow}}
\newcommand{\toas}{\stackrel{\mbox{\tiny a.s.}}{\longrightarrow}}
\renewcommand{\epsilon}{\varepsilon}
\newcommand{\bee}{\begin{equation}}
\newcommand{\eee}{\end{equation}}
\newcommand{\beea}{\begin{array}}
\newcommand{\eeea}{\end{array}}
\renewcommand{\theequation}{\arabic{section}.\arabic{equation}}
\theoremstyle{plain}
\newtheorem{prop}{Proposition}[section]
\newtheorem{theo}[prop]{Theorem}
\newtheorem{lem}[prop]{Lemma}
\theoremstyle{definition}
\newtheorem{rem}[prop]{Remark}
\begin{document}

\title{Limit theorems for a class of \\ stationary increments L\'evy driven moving averages}
\author{Andreas Basse-O'Connor\thanks{Department
of Mathematics, Aarhus University, 
E-mail: basse@math.au.dk.} \and 
Rapha\"el Lachi\`eze-Rey\thanks{Department
of Mathematics, University Paris Decartes, 
E-mail: raphael.lachieze-rey@parisdescartes.fr.} \and
Mark Podolskij\thanks{Department
of Mathematics, Aarhus University,
E-mail: mpodolskij@math.au.dk.}}

\maketitle

\begin{abstract}
In this paper we present some new limit theorems for power variation of $k$th order increments of  
stationary increments L\'evy driven moving averages. In the infill asymptotic setting, where the sampling frequency converges to zero while the time span remains fixed, 
the asymptotic theory
gives very surprising results, which (partially) have no counterpart in the theory of discrete moving averages.
More specifically, we will show that the first order limit theorems and the mode of convergence 
strongly depend on the interplay between the given order of the increments,
the considered power $p>0$, the Blumenthal--Getoor index $\beta \in (0,2)$ of the driving pure jump L\'evy process $L$ and the behaviour of the kernel function
$g$ at $0$ determined by the power $\alpha $. 
First order asymptotic theory essentially comprises three cases: stable convergence towards a certain infinitely divisible distribution,
an ergodic type limit theorem and convergence in probability towards an integrated random process. We also prove the second order limit theorem 
connected to the ergodic type result. When the driving L\'evy process $L$ is a symmetric $\beta$-stable process we  obtain two different limits: 
a central limit theorem and convergence in distribution towards a  $(1-\alpha )\beta$-stable 
totally right skewed random variable.

\ \

{\it Key words}: \
Power variation, limit theorems, moving averages, fractional processes,  stable convergence, high frequency data.\bigskip

{\it AMS 2010 subject classifications.} Primary~60F05,~60F15,~60G22;
secondary~60G48, ~60H05.

\end{abstract}

\section{Introduction and main results} \label{Intro}
\setcounter{equation}{0}
\renewcommand{\theequation}{\thesection.\arabic{equation}}

In the recent years there has been an increasing interest in limit theory for power variations of stochastic processes. Power variation functionals 
and related statistics play a major role in analyzing the fine properties of the underlying model, in stochastic integration concepts and statistical
inference. In the last decade asymptotic theory for power variations of various classes of stochastic processes has been intensively 
investigated in the literature.
We refer e.g.\  to \cite{BGJPS,J,JP,PV} for limit theory for power variations of It\^o semimartingales, to \cite{BNCP09,BNCPW09,Coeu,gl89,nr09} 
for the asymptotic results in the framework of fractional Brownian motion and related processes, and to \cite{Rosenblatt, CTV11, Rosenblatt-1} for investigations 
of power variation of the Rosenblatt process. 
  
In this paper we study the power variation for a class of \textit{stationary increments L\'evy driven moving averages}. More specifically, we consider 
an infinitely divisible process with  stationary increments  $(X_t)_{t\geq 0}$, defined on a  probability space $(\Omega, \mathcal F,   \mathbb P)$, 
given as
\begin{align} \label{def-of-X-43}
X_t= \int_{-\infty}^t \big\{g(t-s)-g_0(-s)\big\}\, dL_s.
\end{align} 
Here $L=(L_t)_{t\in \R}$ is a symmetric L\'evy process on $\R$ with  $L_0=0$ and without Gaussian
component. Furthermore,  $g,g_0: \R \to \R$ are deterministic  functions vanishing on $(-\infty,0)$. 
In the further discussion
we will need the notion of \textit{Blumenthal--Getoor index} of $L$, which is defined via
\begin{align} \label{def-B-G}
\beta:=\inf\left\{r\geq 0: \int_{-1}^1 |x|^r\,\nu(dx)<\infty\right\}\in [0,2],
\end{align} 
where $\nu$ denotes the L\'evy measure of $L$. When $g_0=0$, the process $X$ is a moving average,  and in this case $X$ is a stationary process.  If $g(s)=g_0(s)=s^\alpha_+$,
$X$ is a so called \text{fractional L\'evy process}. In particular, when $L$ is a $\beta$-stable L\'evy motion with $\beta \in (0,2)$,
$X$ is called a linear fractional stable motion and it is self-similar with index $H=\alpha+1/\beta$; see e.g.\ \cite{SamTaq}
(since in this case the stability index and the Blumenthal--Getoor index coincide, they are both denoted by $\beta$). 

Probabilistic analysis of stationary increments L\'evy driven moving averages such as semimartingale property, fine scale structure
and integration concepts, have been investigated in several papers. We refer to the work
of \cite{BasRosSM,BCI, bls12,  bm08,FK} among many others. 
However, only  few results on the power variations of such processes are presently available. Exceptions to this are \cite[Theorem~5.1]{BCI} and   \cite[Theorem~2]{sg14};  see     Remark~\ref{rem3} for a closer discussion of a result from \cite[Theorem~5.1]{BCI}. These two results are concerned with certain power variations of fractional L\'evy process and have some overlap with our Theorem~\ref{maintheorem}(ii) for the linear fractional stable motion, but we apply different proofs.  The aim of this paper is to derive a rather complete picture of the first order asymptotic theory for power variation of the process $X$, and,
in some cases, the associated second order limit theory. We will see that the type of convergence and the limiting random variables/distributions
are quite surprising and novel in the literature. Apart from pure probabilistic interest, limit theory for power variations of 
stationary increments L\'evy driven moving averages give rise to a variety of statistical  
methods. In particular, the theoretical results can be applied to identify and estimate the parameters 
$\alpha$ and  $\beta$ of the model (cf. Section  \ref{stat}).  We refer to e.g. \cite{BCI, Rosenblatt, CTV11, glt15} for related statistical procedures. Furthermore, the asymptotic results  
provide a first step towards limit theory for power variation of stochastic processes,
which contain $X$ as a building block. In this context let us mention stochastic integrals with respect to $X$ and L\'evy semi-stationary processes,
which have been introduced in \cite{BNBV}.  

To describe our main results we need to introduce some notation and a set of assumptions. In this work    
we consider the $k$th order increments $\Delta_{i,k}^{n} X$ of $X$, $k\in \N$, that are defined by 
\begin{align} \label{filter}
\Delta_{i,k}^{n} X:= \sum_{j=0}^k (-1)^j \binom{k}{j} X_{(i-j)/n}, \qquad i\geq k.
\end{align}
 For instance, we have that $\Delta_{i,1}^n X = X_{\frac{i}{n}}-X_{\frac{i-1}{n}}$ and $\Delta_{i,2}^n X = X_{\frac{i}{n}}-
2X_{\frac{i-1}{n}}+X_{\frac{i-2}{n}}$. Our main functional  is the power variation computed on the basis of $k$th order filters:
\begin{align} \label{vn}
V(p;k)_n := \sum_{i=k}^n |\Delta_{i,k}^{n} X|^p, \qquad p>0.
\end{align}
Now, we introduce the following set of assumptions on $g$ and $\nu$:
   
\noindent \textbf{Assumption~(A):}
The function $g\!:\R\to\R$ satisfies $g\in C^k((0, \infty))$ and
\begin{align}\label{kshs}
g(t)\sim c_0 t^\alpha\qquad \text{as } t\downarrow 0\quad \text{for some }\alpha>0\text{ and }c_0\neq  0, 
\end{align}
where $g(t)\sim f(t)$ as $t\downarrow 0$ means that $\lim_{t\downarrow 0}g(t)/f(t)= 1$. 
For some $\theta\in (0,2]$, $\limsup_{t\to \infty} \nu(x\!:|x|\geq t) t^{\theta}<\infty$ and $g - g_0$ is a bounded function in $L^{\theta}(\R_+)$. Finally,
there  exists a $\delta>0$ such that  $|g^{(k)}(t)|\leq K t^{\alpha-k}$ for all $t\in (0,\delta)$,  $|g'|$ and $|g^{(k)}|$ are in $L^\theta((\delta,\infty))$ and  decreasing on $(\delta,\infty)$.

\noindent \textbf{Assumption~(A-log):}
In addition to  (A) suppose that $\int_\delta^\infty |g^{(k)}(s)|^\theta \log(1/|g^{(k)}(s)|)\,ds<\infty$. 

Assumption~(A) ensures in particular  that the process $X$ is well-defined, cf.\ Section~\ref{secPrel}. When $L$ is a $\beta$-stable L\'evy process, we always choose $\theta = \beta$ in assumption (A).
Before we introduce the main results, we need some more notation. 
Let $h_k\!:\R\to\R$ be given by 
\begin{align}\label{def-h-13}
h_k(x)&=  \sum_{j=0}^k (-1)^j \binom{k}{j} (x-j)_{+}^{\alpha},\qquad x\in \R,
\end{align} 
where  $y_+=\max\{y,0\}$ for all $y\in \R$. 
Let $\F=(\f_t)_{t\geq 0}$ be the  filtration generated by $(L_t)_{t\geq 0}$, $(T_m)_{m\geq 1}$  be a   sequence of $\F$-stopping times  that exhausts the jumps of $(L_t)_{t\geq 0}$. That is,  $ \{T_m(\omega):m\geq 1\}\cap \R_+ = \{t\geq 0: \Delta L_t(\omega)\neq 0\}$ and  $T_m(\omega)\neq T_n(\omega)$ for all $m\neq n$ with $T_m(\omega)<\infty$. Let     $(U_m)_{m\geq 1}$ be independent and uniform $[0,1]$-distributed random variables, defined on an extension $(\Omega', \mathcal F', \mathbb P')$ of the original probability space,
which are independent of $\f$. 

The following two theorems summarize the first and  second order limit theory for the power variation $V(p;k)_n$. We would like to emphasize 
part (i) of Theorem \ref{maintheorem} and part (i) of Theorem \ref{sec-order}, which are quite unusual  probabilistic results. We refer to \cite{Aldous,ren}
and to Section \ref{secPrel} for the definition of $\mathcal F$-stable convergence in law which will be denoted $\stab$.    
   
\begin{theo}[First order asymptotics] \label{maintheorem}
Suppose (A) is satisfied and    assume that the Blumenthal--Getoor index satisfies $\beta<2$. 
We obtain the following three cases:
 
\begin{itemize}
\item[(i)] \label{mt-case-1} Suppose that (A-log) holds if  $\theta=1$. If $\alpha <k-1/p$ and $p>\beta$ we obtain the $\f$-stable convergence
\begin{equation} \label{part1}
n^{\alpha p}V(p;k)_n \stab |c_0|^p\!\!\!\!\! \!\sum_{m:\, T_m\in [0,1]} |\Delta L_{T_m}|^p V_m\quad\text{where}\quad 
V_m=\sum_{l=0}^{\infty} |h_k(l+U_m)|^p.
\end{equation} 
\item[(ii)] \label{mt-case-2} Suppose that  $L$ is a symmetric $\beta$-stable L\'evy process with scale parameter $\sigma>0$, i.e. the characteristic function of $L_1$ is given by 
$\E[\exp(iuL_1)]=\exp(-\sigma^{\beta}|u|^{\beta})$. If  $\alpha <k-1/\beta$ and   $p<\beta$ then it holds
that
\begin{align} \label{part2}
n^{-1+p(\alpha + 1/\beta)}V(p;k)_n \toop m_p
\end{align}
where  $m_p=|c_0|^p \sigma^p (\int_\R |h_k(x)|^\beta\,dx)^{p/\beta}\E[|Z|^p]$ and $Z$ is a symmetric $\beta$-stable random variable with scale parameter $1$. 

\item[(iii)] \label{mt-case-3} Suppose that $p\geq 1$. If  $p= \theta$ suppose in addition that (A-log) holds. For all    $\alpha>k-1/( \beta\vee p)$ 
we deduce that
\begin{equation} \label{part3}
  n^{-1+pk}V(p;k)_n \toop \int_0^1 |F_u|^p\, du 
 \end{equation}
where $(F_u)_{u\in \R}$ is a measurable process satisfying
\begin{equation}
F_u=  \int_{-\infty}^u g^{(k)}(u-s) \,dL_s\quad \text{a.s.\ for all }u\in \R \quad \text{and}\quad \int_0^1 |F_u|^{p}\,du<\infty\quad \text{a.s.}
\end{equation} 
\end{itemize}
\end{theo}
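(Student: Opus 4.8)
\textbf{Proof strategy for Theorem~\ref{maintheorem}.}

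The plan is to treat the three regimes separately but with a common localization device: since the statements concern convergence in probability (resp.\ $\f$-stable convergence on compact time intervals), I can assume by a standard localization argument that the driving L\'evy process has bounded jumps and that the associated characteristics are bounded, and then decompose $L$ into a ``big jump'' compound Poisson part $L^{(1)}$ and a ``small jump'' martingale-plus-drift remainder $L^{(2)}$ with L\'evy measure supported near the origin. Correspondingly $X = X^{(1)} + X^{(2)}$, where $X^{(1)}_t = \int_{-\infty}^t\{g(t-s)-g_0(-s)\}\,dL^{(1)}_s$ is a finite sum of kernel translates hit by the jumps $\Delta L_{T_m}$. The heuristic behind all three parts is that $\Delta^n_{i,k}X$ localizes: the kernel $g$ behaves like $c_0 t^\alpha$ near $0$ (Assumption (A)), so an increment straddling a jump time $T_m$ contributes, after rescaling by $n^\alpha$, approximately $c_0\,\Delta L_{T_m}\,h_k(l+U_m)$ where $l$ counts how many grid steps have elapsed since $T_m$ and $U_m\in[0,1]$ is the (asymptotically uniform) position of $T_m$ within its grid cell; meanwhile increments away from any jump are smooth of order $n^{-k}$ because $g\in C^k$ there.

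For part (i) ($\alpha < k - 1/p$, $p > \beta$): here the dominant contribution is the big-jump part. I would first show that $n^{\alpha p}\sum_{i=k}^n |\Delta^n_{i,k}X^{(1)}|^p$ converges $\f$-stably to the stated sum. The point is that near a fixed jump time $T_m = (N_m + U_m)/n$ — where $U_m$ becomes uniform on $[0,1]$ and independent of $\f$ in the limit — the increment $\Delta^n_{i,k}X^{(1)}$ for $i = N_m + l$, $l\ge 0$, equals $c_0\,n^{-\alpha}\,\Delta L_{T_m}\,h_k(l+U_m)(1+o(1))$ by (A), plus a remainder coming from the ``smooth history'' of the kernel before time $T_m$, which contributes only at order $n^{-k}$ and is negligible since $\alpha p < kp - 1 < kp$. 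Summing $|h_k(l+U_m)|^p$ over $l\ge 0$ produces $V_m$; finiteness of $V_m$ needs $h_k(x)\sim c\,x^{\alpha-k}$ as $x\to\infty$ (a Taylor expansion of $(x-j)_+^\alpha$, the defining property of $k$th-order increments), so $\sum_l l^{(\alpha-k)p} < \infty$ iff $(k-\alpha)p > 1$, i.e.\ precisely $\alpha < k - 1/p$. The condition $p>\beta$ enters when one shows that $\sum|\Delta L_{T_m}|^p V_m < \infty$ a.s.\ (the jumps with $|\Delta L_{T_m}|$ small must be $p$-summable, which holds since $p>\beta$) and, crucially, that the small-jump remainder $n^{\alpha p}\sum_i|\Delta^n_{i,k}X^{(2)}|^p$ is negligible: one estimates $\E|\Delta^n_{i,k}X^{(2)}|^p$ using the scaling of the truncated L\'evy measure together with $|\Delta^n_{i,k}X^{(2)}|$ being of order $n^{-\alpha - 1/\beta'}$ for $\beta'$ slightly above $\beta$, so that $n^{\alpha p}\cdot n\cdot n^{-\alpha p - p/\beta'} = n^{1 - p/\beta'} \to 0$ when $p>\beta$. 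Here the extra assumption (A-log) when $\theta = 1$ is needed to control a borderline logarithmic divergence in the tail estimate of the kernel integral.

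For part (ii) I would exploit self-similarity and stationarity of increments of the linear fractional stable motion. Writing $Y^n_i := n^{\alpha+1/\beta}\Delta^n_{i,k}X$, the sequence $(Y^n_i)_i$ is, for each $n$, a stationary $\beta$-stable sequence whose one-dimensional law converges to $c_0\sigma(\int|h_k|^\beta)^{1/\beta}Z$; moreover for $|i-j|\to\infty$ the dependence decays (the kernel $h_k$ of the associated moving-average representation is in $L^\beta$, so the sequence is mixing). Then $n^{-1+p(\alpha+1/\beta)}V(p;k)_n = \frac1n\sum_{i=k}^n|Y^n_i|^p$ is a normalized sum of a stationary mixing sequence of uniformly integrable (since $p<\beta$ guarantees $\E|Y^n_i|^p<\infty$ uniformly) random variables, and an ergodic-type / weak law of large numbers gives convergence in probability to $\E|c_0\sigma(\int|h_k|^\beta)^{1/\beta}Z|^p = m_p$. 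For part (iii) ($\alpha$ large), the increments are now smooth: a $k$th-order Taylor expansion gives $n^k\Delta^n_{i,k}X \approx F_{i/n}$ with $F_u = \int_{-\infty}^u g^{(k)}(u-s)\,dL_s$ (well-defined under (A) since $g^{(k)}\in L^\theta$ near $0$ because $\alpha > k - 1/(\beta\vee p) \ge k - 1/\theta$, and integrable on $(\delta,\infty)$ by the tail hypotheses), so $n^{-1+pk}V(p;k)_n = \frac1n\sum|n^k\Delta^n_{i,k}X|^p$ is a Riemann-sum approximation to $\int_0^1|F_u|^p\,du$; one needs a uniform-in-$i$ $L^p$ (or $L^1$) control of the Taylor remainder $n^k\Delta^n_{i,k}X - F_{i/n}$ together with path regularity / continuity in $L^p$ of $u\mapsto F_u$ to upgrade the Riemann-sum convergence to convergence in probability, and $p\ge 1$ is used via Minkowski/Jensen to bundle the $n$ error terms.

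\textbf{Main obstacle.} The delicate part is part (i): establishing the $\f$-\emph{stable} convergence (not merely convergence in distribution), which requires identifying the limiting pair $(\Delta L_{T_m}, U_m)_m$ jointly — showing that the grid-positions $U_m$ of the jump times become asymptotically i.i.d.\ uniform and independent of the jumps and of $\f$ — and, simultaneously, proving that the small-jump part $X^{(2)}$ contributes nothing after the $n^{\alpha p}$ scaling. The latter is where the interplay $p>\beta$ versus the kernel exponent $\alpha$ is fought out, and where the $\theta=1$ borderline forces the logarithmic refinement (A-log); getting a sharp enough moment bound on $|\Delta^n_{i,k}X^{(2)}|$ — uniformly in $i$ and summable after multiplication by $n\cdot n^{\alpha p}$ — while only assuming the weak tail control $\limsup_t \nu(|x|\ge t)t^\theta<\infty$ rather than an exact stable tail, is the technical heart of the argument.
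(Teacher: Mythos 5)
Your overall architecture is the paper's: truncate $L$ into a compound Poisson big-jump part plus a small-jump remainder, obtain the $\f$-stable limit for the big jumps from the asymptotic uniformity of the grid positions of the jump times (the paper's Lemma on fractional parts), use self-similarity plus mixing/Birkhoff for (ii), and identify the $k$th derivative $F$ plus a Riemann-sum argument for (iii) (the paper does this via $X\in W^{k,p}$ a.s., using Braverman--Samorodnitsky, which is where $p\geq1$ enters). However, two steps of your part (i) have genuine gaps. First, your negligibility argument for the small-jump part is quantitatively wrong: you claim $\E|\Delta^n_{i,k}X^{(2)}|^p\asymp n^{-(\alpha+1/\beta')p}$ so that $n^{\alpha p}\cdot n\cdot n^{-(\alpha+1/\beta')p}=n^{1-p/\beta'}\to0$ for a \emph{fixed} truncation level. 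That cannot be correct, because the limit $\sum_m|\Delta L_{T_m}|^pV_m$ charges jumps of every size; if the contribution of jumps below a fixed threshold vanished after scaling by $n^{\alpha p}$, the limit would only involve jumps above that threshold. In the regime $p>\beta$ the $p$th moment of the truncated increment is governed by the ``one jump in the local window'' scenario and is of order $n^{-\alpha p-1}\int_{\{|x|\leq 1/j\}}(|x|^p+x^2)\,\nu(dx)$, so $n^{\alpha p+1}\E|\Delta^n_{i,k}X(j)|^p$ does \emph{not} tend to zero for fixed $j$; it is only small in the double limit $n\to\infty$ followed by $j\to\infty$. This is exactly the content of the paper's approximation lemma (Lemma~\ref{lem-1}), whose proof requires splitting the kernel integral into the regions near the singularity, the intermediate power-decay zone and the $g^{(k)}$-tail, and it is here that $\alpha<k-1/p$ and $p>\beta$ are used; your $n^{-(\alpha+1/\beta)}$ scaling is the typical-size scaling relevant for $p<\beta$ (part (ii)), not for $p$th moments with $p>\beta$.

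Second, you dismiss the contribution of the distant past of the big-jump part --- the term $R_{i,n,\epsilon}=\int_{-\infty}^{i/n-\epsilon}g_{i,n}(s)\,dL_s$ --- as ``of order $n^{-k}$''. The counting $n^{\alpha p+1-kp}\to0$ is right, but it requires an almost sure bound on $n^k|R_{i,n,\epsilon}|$ that is \emph{uniform in $i$ and $n$} (or grows at most like a power of $\log n$), and these random variables are in general not integrable, so no simple moment bound is available. The paper devotes the hardest part of Step (i) to precisely this: for $\theta\in(0,1]$ an $\omega$-wise Poisson-integral domination, and for $\theta\in(1,2]$ the majorizing-measure machinery of Marcus--Rosi\'nski to control $\sup_{i,n}n^k|R_{i,n,\epsilon}|/(\log n)^{1/q}$. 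Note also that (A-log) at $\theta=1$ is needed exactly in this distant-past estimate (to make $\int|\psi(s)|\log(1/|\psi(s)|)\,ds$ finite), not in the small-jump tail estimate where you placed it. Without some argument of this kind your part (i) is incomplete; parts (ii) and (iii) as you sketch them follow the paper's route (for (iii) you would still need to justify the uniform control of the Taylor remainder without continuity of $F$, which the paper sidesteps by proving $X\in W^{k,p}$ and using a purely deterministic Riemann-sum lemma).
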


We remark that,  
except the critical cases where $p= \beta$, $\alpha= k-1/p$ and $\alpha= k-1/\beta$, 
Theorem~\ref{maintheorem} covers all possible choices of $\alpha>0,\beta\in [0,2)$ and  $p\geq 1$. We also note that the limiting random variable in 
\eqref{part1} is infinitely divisible, see Section~\ref{gencom} for more details. In addition, we note that there is no convergence in probability in \eqref{part1} 
due to the fact that the random variables $V_m$, $m\geq 1$, are independent of $L$ and the properties of stable convergence. 
To be used in the next theorem we recall that a   totally right skewed $\rho$-stable random variable $S$ with $\rho>1$, mean zero and scale parameter $\sigma>0$ has characteristic function given by 
\begin{equation}
\E[ e^{i \theta S} ] =\exp\Big( -\sigma^\rho |\theta |^\rho \big( 1- i \mathrm{sign}(\theta)\tan(\pi \rho/2)\big)\Big),\qquad \theta\in \R. 
\end{equation}

For part (ii) of Theorem \ref{maintheorem}, which we will refer to as the ergodic case, we also show the second order asymptotic results under the additional condition $p<\beta/2$. We remark that for 
$k=1$ we are automatically in the regime of Theorem~\ref{sec-order}(i).
\begin{theo}[Second order assymptotics]\label{sec-order}
Suppose that assumption (A) is satisfied and  $L$ is a symmetric $\beta$-stable L\'evy process with scale parameter $\sigma>0$. 
Let  $f:[0,\infty )\mapsto \R$ be given by  $f(t)=g(t)t^{-\alpha}$ for  $t>0$ and $f(0)= c_0$. Fix  $k\geq 1$ and  assume that $f$ is $k$-times  continuously right differentiable at $0$ and  $| g^{(k)}(t)|\leq K t^{\alpha-k}$ for all $t>0$. For all $p<\beta/2$ we  have the following two cases:
\begin{itemize}
\item [(i)]   Suppose that  $\alpha\in (k-2/\beta,k-1/\beta)$. If  $\beta<1/2$ assume in addition that  $\alpha>k-\frac{1}{\beta(1-\beta)}$. Then it  holds that
\[
n^{1-\frac{1}{(k-\alpha)\beta}}\Big(n^{-1+p(\alpha + 1/\beta)}V(p;k)_n- m_p\Big) \schw S,
\]
where $S$ is a totally right skewed  $(k-\alpha)\beta$-stable random variable with mean zero and scale parameter  $\widetilde{\sigma}$, which is defined in  Remark~\ref{rem-const}(i).   
\item[(ii)]  \label{mt-case-6} If   $\alpha \in (0,k-2/\beta)$ we deduce that 
\begin{align} \label{part5}
\sqrt{n} \Big( n^{-1+p(\alpha + 1/\beta)}V(p;k)_n - m_p \Big) \schw \mathcal N(0, \eta^2),
\end{align} 
where the quantity $\eta^2\in (0,\infty)$ is defined via
\begin{align} \label{etaformula}
\eta^2 = \lim_{n \to \infty} \left( \text{\rm var}\left( |n^{\alpha +1/\beta}\Delta_{k,k}^{n} X|^p \right) 
+ 2 \sum_{l=1}^{n-k} \text{\rm cov}\left(  |n^{\alpha +1/\beta}\Delta_{k,k}^{n} X|^p, 
 |n^{\alpha +1/\beta}\Delta_{k+l,k}^{n} X|^p \right)  \right).
\end{align}
\end{itemize}
\end{theo}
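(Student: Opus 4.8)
The plan is to reduce both parts to a limit theorem for the partial sums of $|\cdot|^p$ applied to a stationary symmetric $\beta$-stable moving average, and then to treat the two regimes with different classical tools: an $m$-dependent central limit theorem in (ii), and a big-jump / point-process argument producing a compensated-jump stable limit in (i).

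\textbf{Step 1 (representation and reduction).} Since $\sum_{j=0}^k(-1)^j\binom{k}{j}=0$, the $g_0$-terms in \eqref{def-of-X-43} telescope and, with the convention $g\equiv 0$ on $(-\infty,0)$,
\[
\Delta_{i,k}^n X=\int_\R\psi_{i,n}(s)\,dL_s,\qquad \psi_{i,n}(s):=\sum_{j=0}^k(-1)^j\binom{k}{j}\,g\!\left(\tfrac{i-j}{n}-s\right).
\]
Writing $g(t)=t^\alpha f(t)$ and using that $f$ is $k$-times continuously right differentiable at $0$ with $f(0)=c_0$ together with the global bound $|g^{(k)}(t)|\le Kt^{\alpha-k}$, the change of variables $u=i-ns$ yields
\[
n^{\alpha+1/\beta}\Delta_{i,k}^n X=c_0\,n^{1/\beta}\!\int_\R h_k(i-ns)\,dL_s+R_{i,n},
\]
where $h_k$ is as in \eqref{def-h-13} and satisfies $h_k(x)\sim c_{k,\alpha}x^{\alpha-k}$ as $x\to\infty$ for an explicit constant $c_{k,\alpha}$, so $h_k\in L^\beta(\R)$ because $\alpha<k-1/\beta$ in both regimes. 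By the self-similarity $(n^{1/\beta}L_{t/n})_{t\in\R}\eqschw(L_t)_{t\in\R}$, the principal term, jointly in $i$, has the law of $(c_0\widetilde\zeta_i)_{i\ge k}$ with $\widetilde\zeta_i:=\int_\R h_k(i-u)\,dL_u$ a stationary symmetric $\beta$-stable process and $\E|c_0\widetilde\zeta_0|^p=m_p$. Since $p<\beta/2<1$ one has $\big||x|^p-|y|^p\big|\le|x-y|^p$, hence $\big||n^{\alpha+1/\beta}\Delta_{i,k}^nX|^p-|c_0\widetilde\zeta_i^{(n)}|^p\big|\le|R_{i,n}|^p$ (with $\widetilde\zeta_i^{(n)}$ the principal part), and a standard — but in (i) with $\beta<1/2$ delicate — estimate on the $L^q$-norms ($q<\beta$) of the stable integral $R_{i,n}$ shows that $\sum_{i=k}^n\big(|R_{i,n}|^p-\E|R_{i,n}|^p\big)$ is of order strictly below the relevant normalising sequence. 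It therefore suffices to prove the two statements for $\tfrac1n\sum_{i=k}^n\big(|\widetilde\zeta_i|^p-\E|\widetilde\zeta_i|^p\big)$, the constant $c_0^p$ being absorbed.

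\textbf{Step 2 (part (ii), CLT).} Split $\widetilde\zeta_i=\widetilde\zeta_i^{(m)}+\rho_i^{(m)}$ with $\widetilde\zeta_i^{(m)}:=\int_{i-m}^i h_k(i-u)\,dL_u$. For fixed $m$ the sequence $(\widetilde\zeta_i^{(m)})_i$ is stationary and $m$-dependent, and $|\widetilde\zeta_i^{(m)}|^p\in L^2$ since $2p<\beta$; the Hoeffding--Robbins CLT gives $\tfrac1{\sqrt n}\sum_{i=k}^n\big(|\widetilde\zeta_i^{(m)}|^p-\E|\widetilde\zeta_i^{(m)}|^p\big)\schw\n(0,\eta_m^2)$. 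The analytic core is the decay estimate $\big|\mathrm{cov}(|\widetilde\zeta_0|^p,|\widetilde\zeta_l|^p)\big|\le C l^{-\gamma}$ with $\gamma>1$ when $(k-\alpha)\beta>2$, obtained by splitting the kernel of $\widetilde\zeta_l$ into the piece overlapping the past of $\widetilde\zeta_0$ (of $L^\beta$-norm $O(l^{\alpha-k+1/\beta})$) and the rest and expanding $|\cdot|^p$ to first order. It yields $\sum_{l\ge1}|\mathrm{cov}(|\widetilde\zeta_0|^p,|\widetilde\zeta_l|^p)|<\infty$, so $\eta_m^2\to\eta^2:=\var(|\widetilde\zeta_0|^p)+2\sum_{l\ge1}\mathrm{cov}(|\widetilde\zeta_0|^p,|\widetilde\zeta_l|^p)\in[0,\infty)$, which by dominated convergence coincides with \eqref{etaformula}; and it controls the tail, since $\big||\widetilde\zeta_i|^p-|\widetilde\zeta_i^{(m)}|^p\big|\le|\rho_i^{(m)}|^p$ and $\rho_0^{(m)}$ has scale $O(m^{\alpha-k+1/\beta})\to0$, whence $\E\big[\big(\tfrac1{\sqrt n}\sum_i(|\rho_i^{(m)}|^p-\E|\rho_i^{(m)}|^p)\big)^2\big]\le\sum_{l\in\Z}|\mathrm{cov}(|\rho_0^{(m)}|^p,|\rho_l^{(m)}|^p)|\to0$ as $m\to\infty$, uniformly in $n$. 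A routine $\ep$--$m$--$n$ argument then yields \eqref{part5}, and $\eta^2>0$ follows by excluding that $(|\widetilde\zeta_i|^p)_i$ is an $L^2$-coboundary (using e.g.\ that $\widetilde\zeta_0$ has a bounded density).

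\textbf{Step 3 (part (i), stable limit, and the main obstacle).} Here $(k-\alpha)\beta\in(1,2)$, $\sum_l|\mathrm{cov}(|\widetilde\zeta_0|^p,|\widetilde\zeta_l|^p)|=\infty$, and the partial sums are governed not by the light marginal tails but by the long-range effect of individual large jumps of $L$. Fix $\ep>0$, write $L=L'+L''$ with $L''$ carrying the jumps of modulus $>\ep$ (finitely many on $[0,1]$, at the stopping times $T_m$). A jump of size $x$ at time $T$ adds $\approx c_0 n^{1/\beta}h_k(i-nT)x$ to $n^{\alpha+1/\beta}\Delta_{i,k}^n X$, and by symmetry of $x$ its net contribution to $\sum_{i=k}^n\big(|n^{\alpha+1/\beta}\Delta_{i,k}^nX|^p-m_p\big)$ is, to leading order, $\sum_{i>nT}|c_0n^{1/\beta}h_k(i-nT)x|^p$ truncated at the radius $i-nT\asymp(n^{1/\beta}|x|)^{1/(k-\alpha)}$ inside which this term dominates the $O_P(1)$ background of $L'$; using $\sum_{l=1}^{N}|h_k(l)|^p\asymp N^{1-(k-\alpha)p}$ (divergent as $N\to\infty$ since $(k-\alpha)p<2p/\beta<1$) this contribution equals $n^{1/((k-\alpha)\beta)}|x|^{1/(k-\alpha)}$ times an explicit constant depending on $c_0,\sigma,k,\alpha$. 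Under $w=|x|^{1/(k-\alpha)}$ the jumps of $L$ on $[0,1]$ form a point process of intensity $\propto w^{-(k-\alpha)\beta-1}dw$; since $n^{1-1/((k-\alpha)\beta)}n^{-1}=n^{-1/((k-\alpha)\beta)}$, the compensated sum of these contributions converges in law to a totally right skewed $(k-\alpha)\beta$-stable random variable $S$ of mean zero, whose scale $\widetilde\sigma$ — the content of Remark~\ref{rem-const}(i) — comes from that constant and from $h_k(x)\sim c_{k,\alpha}x^{\alpha-k}$. Making this rigorous is the main obstacle: (a) the "zone of dominance" picture requires uniform-in-$i$ control of the background field $\int h_k(i-ns)\,dL'_s$ and a careful treatment, for $p<1$, of the cross terms in $|A_{i,n}+B_{i,n}|^p$; (b) one must show that the jumps $\le\ep$, the "non-dominating" contributions, and the Step-1 remainder are $o\big(n^{1/((k-\alpha)\beta)}\big)$ uniformly as $\ep\to0$ — here $2p<\beta$ makes the relevant compensated Poisson integrals $L^2$-bounded, the exponent inequality $1/((k-\alpha)\beta)>1-p((k-\alpha)-1/\beta)$ (valid since $p<1/(k-\alpha)$) shows the non-dominating part is genuinely of lower order, and the borderline case $\beta<1/2$ forces the hypothesis $\alpha>k-\tfrac1{\beta(1-\beta)}$; (c) the limit law is identified via characteristic functions and the L\'evy--Khintchine form of a $(k-\alpha)\beta$-stable distribution. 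Step (a)--(b) — the uniform control of the background field against which each jump is weighed — is the hardest part of the theorem.
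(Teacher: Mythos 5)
Your Step 1 reduction does not go through as stated, and this undermines both parts. Bounding the kernel-replacement error by $\sum_{i=k}^n|R_{i,n}|^p$ via $\big||x|^p-|y|^p\big|\le|x-y|^p$ leaves you with a \emph{nonnegative} error whose mean is of order $n\,\|D^kg_n-h_k\|_{L^\beta(\R_+)}^p$; since generically $|D^kg_n(x)-h_k(x)|\asymp n^{-1}x^{\alpha-k+1}$ for $k\le x\le n$ (and of order $n^{-1}$ on $[0,k]$ unless $f'(0)=0$), this mean is of order $n^{1+p((\alpha-k)\beta+1)/\beta}$. The hypothesis $p<\beta/2$ gives no lower bound on $p$, and for small $p$ this exponent exceeds both $1/2$ and $1/((k-\alpha)\beta)$ (e.g.\ $k=2$, $\beta=3/2$, $\alpha=0.6$, $p=0.2$ for part (ii); $k=1$, $\beta=3/2$, $\alpha=0.2$, $p=0.2$ for part (i)), so the error is \emph{not} $o$ of the normalisation; centering does not rescue it unless you additionally control the covariances across $i$ of the differences, which is exactly the analysis you are trying to bypass. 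The paper never makes this pathwise replacement: the fluctuation term is kept with the $n$-dependent kernel $\phi^n_j$ throughout, the limit kernel $h_k$ enters only through the deterministic centering (Lemma~\ref{helplem1}, rates $n^{-1}$ resp.\ $n^{(\alpha-k)\beta+1}$), and in part (i) the two kernels are compared only \emph{after} conditioning on the unit-increment $\sigma$-fields, where the smoothed function $\Phi_\rho\in C^3$ yields bounds that are essentially linear (or of order $\beta-\epsilon$) in the kernel difference (Lemma~\ref{est-H-inf}); this is precisely where the extra hypothesis $\alpha>k-\tfrac{1}{\beta(1-\beta)}$ for $\beta<1/2$ comes from, which your sketch does not reproduce.

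Beyond the reduction, the steps you label "standard" or "the main obstacle" are the actual content of the theorem and are not supplied. In (ii) your outline (truncation to an $m$-dependent sequence, Berk/Hoeffding--Robbins CLT, convergence of variances) matches the paper's, but the two estimates you merely assert --- the covariance decay $|\mathrm{cov}(|\widetilde\zeta_0|^p,|\widetilde\zeta_l|^p)|\le Cl^{-\gamma}$ with $\gamma>1$ "by expanding $|\cdot|^p$ to first order", and the uniform-in-$n$ negligibility of the truncation tail --- cannot be obtained that way: $|x|^p$ is not differentiable at the origin and the innovations have infinite variance, and the paper's substitute is the multi-step projection decomposition $S_n-S_{n,m}=\sum_r R^{n,m}_r+\sum_r Q^{n,m}_r$ with the smoothed functions $\Phi_\rho$ and the second-moment bounds of Proposition~\ref{prop-key-est}, which is where essentially all the work lies. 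In (i) your big-jump/zone-of-dominance picture identifies the correct tail index $(k-\alpha)\beta$, but a sharp truncation of $\sum_l|n^{1/\beta}h_k(l)x|^p$ at the dominance radius does not produce the correct scale: $\widetilde\sigma$ in Remark~\ref{rem-const} is built from $\kappa=\frac{k_\alpha^{1/(k-\alpha)}}{k-\alpha}\int_0^\infty\Phi(y)y^{-1-1/(k-\alpha)}dy$ with $\Phi(y)=\E[|\Delta_{k,k}\widetilde X+y|^p-|\Delta_{k,k}\widetilde X|^p]$, i.e.\ from the \emph{compensated, smoothed} interaction of a large increment with the background field. The paper obtains this by projecting $S_n$ onto the unit-increment $\sigma$-fields to get the i.i.d.\ sequence $Z_r$, proving $\P(Z_1>x)\sim\gamma x^{-(k-\alpha)\beta}$ via $\overline\Phi(x)\sim\kappa x^{1/(k-\alpha)}$, invoking the classical stable limit theorem for i.i.d.\ sums, and killing the remainders with Proposition~\ref{prop-key-est}; your plan would need equivalent machinery for the uniform control of the background, the $p<1$ cross terms and the small-jump limit $\epsilon\to0$, none of which is carried out.
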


This paper is structured as follows. The basic ideas and methodology of the proofs are demonstrated 
in Section ~\ref{sec3a}. Section~\ref{sec2} presents some remarks about the nature and applicability of the main results.   Section~\ref{secPrel} introduces some preliminaries.  
We state the proof of Theorem \ref{maintheorem} in Section~\ref{proofs-w3lkhj}, while the proof of Theorem \ref{sec-order} is demonstrated in Section \ref{sec5}.

\section{Basic ideas and methodology} \label{sec3a}
\setcounter{equation}{0}
\renewcommand{\theequation}{\thesection.\arabic{equation}}

\subsection{First order asymptotics} 
In this section we explain the basic intuition and the methodology of the proofs of Theorem~\ref{maintheorem}. For simplicity of exposition we only consider the case
$g_0=0$, $k=1$ and we set $\Delta_{i}^{n} X:=\Delta_{i,1}^{n} X$, $h:=h_1$ and $V(p)_n:=V(p;1)_n$.

In order to uncover the path properties of the process $X$ we perform a formal differentiation with respect to time. Since $g(0)=0$ we obtain a formal representation 
\begin{align} \label{formalsde}
dX_t = g(0) dL_t + \left(\int_{-\infty}^t g'(t-s) \,dL_s \right) dt = F_t \,dt.
\end{align}  
We remark that the path $(F_t(\omega))_{t \in [0,1]}$ 
is not necessarily bounded under assumption (A). However, 
under conditions of Theorem \ref{maintheorem}(iii), the process $X$ is differentiable almost everywhere and $X'=F\in
L^p([0,1])$; see Lemma \ref{abs-cont-sdf} for a detailed exposition.  Thus,
under conditions of Theorem~\ref{maintheorem}(iii),  an application of the mean value theorem gives
an intuitive proof  of \eqref{part3}:
\begin{align*}
\text{$\P$-}\!\!\!\lim_{n\to \infty} n^{-1+p} V(p)_n = 
\text{$\P$-}\!\!\!\lim_{n\to \infty} \frac{1}{n}   \sum_{i=1}^n |F_{\xi_i^n}|^p 
=\int_0^1 |F_u|^p\, du,
\end{align*}
where $\xi_i^n \in ((i-1)/n, i/n)$; we refer to Lemma \ref{lem-f-65} for a formal argument. 
This gives a sketch
of the proof of the asymptotic result at \eqref{part3}.

Now, we turn our attention to the small scale behaviour of the stationary increments L\'evy driven moving averages $X$. Recall that under conditions of Theorem \ref{maintheorem}(ii), 
$\alpha<1-1/\beta$ and thus $g'$ has an explosive behaviour at $0$. Hence,
we intuitively deduce the 
following approximation for the increments of $X$ for a small $\Delta>0$:
\begin{align}
X_{t+\Delta} - X_t &= \int_{\R} \{g(t+\Delta -s) - g(t -s)\}\, dL_s \nonumber \\
& \label{tangent} \approx \int_{t+\Delta -\epsilon }^{t+\Delta} \{g(t+\Delta -s) - g(t -s)\} \,dL_s \\
& \approx c_0 \int_{t+\Delta -\epsilon }^{t+\Delta} \{(t+\Delta -s)_{+}^\alpha  - (t -s)_{+}^\alpha \} \,dL_s 
 \nonumber \\
& \approx c_0 \int_{\R} \{(t+\Delta -s)_{+}^\alpha  - (t -s)_{+}^\alpha \} \,dL_s = \widetilde{X}_{t+\Delta} - \widetilde{X}_t,  \nonumber
\end{align} 
where 
\begin{align} \label{flm1}
\widetilde{X}_t := c_0 \int_{\R} \{(t-s)_{+}^\alpha  - ( -s)_{+}^\alpha \}\, dL_s,
\end{align} 
and $\epsilon >0$ is an arbitrary small real number with $\epsilon \gg \Delta$. In the classical terminology $\widetilde{X}$ is called
the \textit{tangent process of $X$}.    
In the framework of  
Theorem \ref{maintheorem}(ii) the process $\widetilde{X}$ 
is a symmetric fractional $\beta$-stable motion. 
We recall that $(\widetilde{X}_t)_{t \geq 0}$ has stationary increments, symmetric $\beta$-stable marginals,  H\"older index $\alpha$ (cf. \cite[Theorem~3.4]{Takashima})  and 
it is self-similar with index $H=\alpha + 1/\beta \in (1/2,1)$, i.e.
\[
(\widetilde{X}_{at})_{t \geq 0} \eqschw a^H(\widetilde{X}_t)_{t \geq 0}
\qquad \text{for any } a\in \R_+.
\] 
Furthermore,   the symmetric fractional $\beta$-stable noise 
$(\widetilde{X}_t - \widetilde{X}_{t-1})_{t \geq 1}$ is mixing; 
see e.g.\ \cite{Ergodic-Cam}. Thus, using Birkhoff's ergodic theorem we conclude that 
\begin{align*}
n^{-1+p(\alpha + 1/\beta)}V(p)_n & = \frac{1}{n} \sum_{i=1}^n |n^H \Delta_{i}^{n} X|^p \\
& \approx \frac{1}{n} \sum_{i=1}^n |n^H \Delta_{i}^{n} \widetilde{X}|^p \\
&  \eqschw \frac{1}{n} \sum_{i=1}^n |\widetilde{X}_i - \widetilde{X}_{i-1}|^p \toop 
\E [|\widetilde{X}_1 - \widetilde{X}_{0}|^p]. 
\end{align*}  
This method sketches the proof of the convergence at  
\eqref{part2}.

\begin{rem} \label{rem3.1} \rm
We recall that $L$ is assumed to be a symmetric $\beta$-stable L\'evy process   in Theorems \ref{maintheorem}(ii) and \ref{sec-order}. 
We conjecture that this assumption
can be relaxed following the discussion of tangent processes at \eqref{tangent}. 
Indeed, the small jumps of the driving L\'evy process $L$ are dominating in the asymptotic results
of Theorems \ref{maintheorem}(ii) and \ref{sec-order}. Thus, it seems to suffice when 
small jumps of $L$ are in the domain of attraction of a symmetric $\beta$-stable L\'evy
process, e.g.\   the  L\'evy measure of the process $L$ satisfies the decomposition
\[
\nu (dx) = \left(\text{const} \cdot |x|^{-1-\beta} + \varphi(x) \right) dx,
\]  
where the function $\varphi$ satisfies the conditions $\int_{\R} (1 \wedge  x^2) \varphi(x) dx <
\infty$ and  $\varphi(x)=o(|x|^{-1-\beta})$ as $x\rightarrow 0$. Such processes include for instance  tempered or truncated symmetric $\beta$-stable L\'evy motions. 
We believe that the statement of Theorem \ref{maintheorem}(ii) remains valid for the above 
form of the L\'evy density. Theorem  \ref{sec-order} would probably require a stronger condition
on the function $\varphi$.
\qed
\end{rem}

At this stage we need to better understand the fine scale behaviour of the process $X$ in order
to describe the intuition behind the non-standard result of Theorem  \ref{maintheorem}(i). For the ease 
of exposition we will discuss the following simple framework: Assume that the driving motion $L$ jumps only once at random time $T$, which has a density on the interval $(0,1)$ (note that 
$L$ is not a L\'evy process in this case). That is, $L$ has the representation 
\[
L_t = \1_{(-\infty, T)} (t) \Delta L_T.
\]   
Now, we will describe  the asymptotic distribution of the 
scaled increments $n^{\alpha} \Delta_{i}^{n} X$. Let $j_n$ be a random index satisfying
$T \in [(j_n-1)/n,j_n/n)$. Similarly to the approximation at \eqref{tangent}, we obtain that  
\begin{align*}
\Delta_{i}^{n} X  \approx A_i^n := c_0 
\int_{0}^{\frac{i}{n}} \Big\{ \Big( \frac{i}{n} -s \Big)^{\alpha}_{+} - 
\Big( \frac{i-1}{n} -s \Big)^{\alpha}_{+}  \Big \} \, dL_s.
\end{align*}
Since $T \in [(j_n-1)/n,j_n/n)$ 
is the only jump time of $L$, we observe that $A_i^n = 0$  
for all $i<j_n$.  More precisely, we deduce that 
\begin{align*}
\Delta_{j_n+l}^{n} X \approx
c_0 \Delta L_T \left( \Big( \frac{j_n+l}{n} -T \Big)_+^{\alpha} - 
\Big( \frac{j_n+l-1}{n} -T \Big)_+^{\alpha} \right), \qquad  l \geq 0.
\end{align*}
Now, we use the following result, which is essentially due to Tukey  \cite{Tukey} (see also
\cite{Jacod-round-off} and Lemma \ref{frac-part-lem} below): Let $Z$ be a random 
variable with an absolutely continuous distribution and let $\{x\}:=x-\lfloor x\rfloor\in [0,1)$ denote
the fractional part of $x\in \R$. Then it holds that 
\[
\{nZ\} \stab U \sim \mathcal{U}([0,1]),
\]   
where $U$ is defined on the extended probability space and $U$ is independent of $Z$.
Since $j_n-nT=1- \{n T\}$ and $1-U \sim \mathcal{U}([0,1])$,  we conclude the stable convergence 
in law 
\[
n^{\alpha} \Delta_{j_n+l}^{n} X \stab c_0 \Delta L_T \left((l+U)_+^{\alpha} -  (l-1+U)_+^{\alpha} \right),
\qquad l \geq 0.
\]
Thus, in this setting we obtain the result of \eqref{part1} as follows:
\begin{align} \label{stable1jump}
n^{\alpha p} V(X,p)_n \approx \sum_{i=j_n}^n |n^{\alpha} A_i^n |^p \stab  |c_0 \Delta L_T|^p
\sum_{l=0}^{\infty} \left|(l+U)_+^{\alpha} -  (l-1+U)_+^{\alpha} \right|^p,
\end{align} 
which gives an intuitive proof of Theorem \ref{maintheorem}(i). A formal proof of the stable convergence at \eqref{part1} for a general L\'evy motion $L$ 
requires a decomposition of the driving jump measure associated with 
$L$ into big and small jumps, and a certain time separation between the big jumps.

\subsection{Weak limit theorems}
In this section we highlight the basic methodology behind the proof of Theorem~\ref{sec-order}. 
For the sake of exposition, we will rather consider the power variation 
$V(\widetilde{X}, p;k)_n$ of the tangent process 
$\widetilde{X}$ defined at \eqref{flm1} driven by a symmetric $\beta$-stable
L\'evy motion $L$.

Weak limit theory for statistics of \textit{discrete} moving averages has been a subject of a deep investigation during the last thirty years. 
In a functional framework a variety of different limit distributions may appear. They include
Brownian motion, $m$th order Hermite  processes, stable L\'evy processes with various stability indexes and fractional Brownian motion. 
We refer to the papers \cite{at87,hh97,h99,ks01,s02,s04} for an overview. 

Let us start with the treatment of Theorem~\ref{sec-order}(i). By self-similarity of the symmetric fractional $\beta$-stable motion $\widetilde{X}$, we conclude that 
\begin{align} \label{distreq}
n^{1-\frac{1}{(k-\alpha)\beta}}\Big(n^{-1+p(\alpha + 1/\beta)}V(\widetilde{X},p;k)_n- m_p\Big)
\eqschw n^{-\frac{1}{(k-\alpha)\beta}} \sum_{i=k}^n H \left(\Delta_{i,k} \widetilde{X}\right), 
\end{align}  
where 
\[
\Delta_{i,k} \widetilde{X}:= \sum_{j=0}^k (-1)^j \binom{k}{j} \widetilde{X}_{i-j} \qquad 
\text{and} \qquad  H(x):=|x|^p -m_p.
\]
In this framework the most important ingredient is the \textit{Appell rank}
of the function $H$ (cf.\ \cite{at87}). We recall that for a general function $H:\R \to \R$
with $\E[H(\Delta_{k,k} \widetilde{X})]=0$ 
the Appell rank of $H$ is defined via
\begin{align*}
m^{\star} := \min_{m\geq 1} \{H_\infty^{(m)} (0) \not= 0\} \qquad \text{with} \qquad H_\infty (x):= \mathbb E[H(\Delta_{k,k} \widetilde{X}+x)].  
\end{align*} 
Notice that in the setting $H(x)=|x|^p -m_p$ we have that $m^{\star}=2$, since $\widetilde{X}$ has 
symmetric distribution and the function $x \mapsto |x|^p$ is even.  
It turns out that Appell rank $m^{\star}$ together with the parameter $\alpha$ and the tail behaviour 
of the noise $L_1$ determines the limiting behaviour of the statistic on the right hand side 
of \eqref{distreq}. The weak limit theory for $m^{\star}=1,2,3$ in the context of discrete moving average  
processes has been investigated in  \cite{hh97,h99, s02,s04} among others. We remark however that
for $m^{\star}\geq 2$ the authors
 only consider bounded functions $H$ and existence of second moments of the noise process. 
Both assumptions are obviously not satisfied in our setting since $\mathbb E[L_1^2]=\infty$. 

The key to proving Theorem~\ref{sec-order}(i) are several projection techniques that are described in
details in Section \ref{sec6.1} (cf. Eq. \eqref{eq:7836638-22}). In particular, they show the 
decomposition 
\[
n^{-\frac{1}{(k-\alpha)\beta}} \sum_{i=k}^n H \left(\Delta_{i,k} \widetilde{X}\right) =
n^{-\frac{1}{(k-\alpha)\beta}} \sum_{i=k}^n Z_i + o_{\mathbb P} (1),
\]
where $(Z_i)_{i \geq k}$ is a certain sequence of i.i.d. random variables. Thus, by  \cite[Theorem~1.8.1]{SamTaq}, it is sufficient to determine the tail behaviour of $Z_1$.
That is, we prove the convergence 
\[
\lim_{x\to\infty}x^{(k-\alpha)\beta}\P( Z_1>x)= \gamma \qquad \text{and}\qquad \lim_{x\to\infty}x^{(k-\alpha)\beta} \P(Z_1<-x)=0
\]
for a constant $\gamma \in (0,\infty)$, which completes the proof of Theorem~\ref{sec-order}(i)
(cf. Section \ref{sec6.3}). At this stage we remark that Theorem~\ref{sec-order}(i) 
is similar in spirit to the results of \cite{s04}. Indeed, we apply a similar proof strategy to show the weak convergence. However, strong modifications due to unboundedness of $H$, triangular nature of summands in \eqref{vn}, stochastic integrals instead of sums,  and the different set of conditions are required. 

In order to describe the main ideas behind the proof of Theorem~\ref{sec-order}(ii) we again observe  the identity in distribution
\begin{align}
\sqrt{n}\Big(n^{-1+p(\alpha + 1/\beta)}V(\widetilde{X},p;k)_n- m_p\Big)
\eqschw \frac{1}{\sqrt{n}}\sum_{i=k}^n H \left(\Delta_{i,k} \widetilde{X}\right)
=: \frac{1}{\sqrt{n}} S_n. 
\end{align} 
In the first step the term $S_n$ is approximated by the quantity $S_{n,m}$, which is a sum of $m$-dependent identically distributed random variables. This approximation is obtained by a proper cut off 
in the integration region of the integral $\Delta_{i,k} \widetilde{X}$. In the second step we will show that
\[
\frac{1}{\sqrt{n}} S_{n,m} \schw \mathcal N(0, \eta^2_m) \quad \text{as } n \to \infty
\qquad \text{and} \qquad \lim_{m\to \infty} \eta^2_m = \eta^2. 
\]  
Hence, the proof of Theorem~\ref{sec-order}(ii) is complete if we show the convergence
\[
\lim_{m\to \infty} \limsup_{n\to \infty} \big(n^{-1} \E[(S_{n}-S_{n,m})^2]\big)  = 0,
\]
which is the main part of the proof. It again relies on rather complex projection techniques similar to 
the proof of  Theorem~\ref{sec-order}(i) (cf. Eq. \eqref{eq:283723}).

\begin{rem} \label{rem2} \rm
The symmetry condition on the L\'evy process $L$ is assumed for sake of assumption simplification. Most asymptotic results of this paper
would not change if we dropped this condition. However, the Appell rank of the function 
$H(x)=|x|^p - m_p$ might be $1$ when $L$ is not symmetric
and this does change the result of Theorem \ref{sec-order}(i). We conjecture that the limiting distribution becomes $\beta$-stable in this framework (see e.g.\  \cite{ks01} for the discrete moving average setting). 
However, 
we dispense with the exact exposition of this case. \qed  
\end{rem}

\section{Some remarks and applications} \label{sec2}
\setcounter{equation}{0}
\renewcommand{\theequation}{\thesection.\arabic{equation}}

\subsection{General comments} \label{gencom}
In this section we discuss the set of assumptions and the various statements of the main 
theoretical results. We start by commenting on the set of conditions introduced in assumption (A).   
First of all, assumption (A) ensures that the process $X$ introduced at  \eqref{def-of-X-43} is well-defined (cf. Section \ref{secPrel}). 
More importantly, the conditions of assumption (A) guarantee that the quantity
\begin{align} \label{impcond}
\int_{-\infty}^{t-\epsilon} g^{(k)}(t-s) \,dL_s, \qquad \epsilon >0,
\end{align}            
is well-defined (typically, this does not hold true for $\epsilon =0$). 
The latter is crucial for the proof of Theorem \ref{maintheorem}(i).
We recall that the condition $p\geq 1$ is imposed in Theorem \ref{maintheorem}(iii). 
We think that this condition might not be necessary, but the results
of \cite{SamBra} applied in our proofs require $p\geq 1$. Finally, we note that 
assumption (A-$\log$) will be used only for the case $\theta =1$ (resp.\ $\theta=p$) in part (i) (resp.\ part (iii)) of Theorem \ref{maintheorem}.

The conditions $\alpha \in (0,k-1/p)$ and $p>\beta$ of Theorem \ref{maintheorem}(i) seem to be sharp.
Indeed, Taylor expansion implies that $|h_k(x)|\leq K|x|^{\alpha -k}$ for large $x$. Consequently, 
we obtain from \eqref{part1} that 
\[
\sup_{m\geq 1} V_m <\infty  
\]
when $\alpha \in (0,k-1/p)$. On the other hand $\sum_{m:T_m\in [0,1]} |\Delta L_{T_m}|^p<\infty$ for $p>\beta$, which follows from the definition
of the Blumenthal--Getoor index at \eqref{def-B-G}. Notice that under assumption $\alpha \in (0, k-1/2)$
the case $p=2$, which corresponds to quadratic variation, always falls under Theorem \ref{maintheorem}(i).  

We remark that the distribution of the limiting variable in \eqref{part1} does not depend
on the chosen sequence $(T_m)_{m\geq 1}$ of stopping times which exhausts the jump times of $L$. Furthermore, 
the limiting random variable $Z$ in \eqref{part1} is infinitely divisible with L\'evy 
measure $(\nu\otimes \eta)\circ \big((y,v)\mapsto  |c_0 y|^p v\big)^{-1}$, where $\eta$ denotes the law of $V_1$. In fact, $Z$ has characteristic function
given by 
\begin{equation}
\E[\exp(i \theta Z)]=\exp\Big( \int_{\R_0\times \R} (e^{i \theta |c_0 y|^p v}-1)  \,\nu(dy)\,\eta(dv)   \Big).
\end{equation}
To show this, let  $\Lambda$ be the Poisson random measure given by 
$\Lambda=\sum_{m=1}^\infty \delta_{(T_m,\Delta L_{T_m})}$ on $[0,1]\times \R_0$  which has intensity measure $\lambda\otimes \nu$. Here  $\R_0:=\R\setminus \{0\}$ and  $\lambda$ denotes the Lebesgue measure on $[0,1]$. 
Set  $\Theta=\sum_{m=1}^\infty \delta_{(T_m, \Delta L_{T_m},V_m)}$. Then $\Theta$ is a Poisson random measure with intensity  measure $\lambda\otimes \nu \otimes \eta$, due to  \cite[Theorem~36]{Serfozo}. Thus, the  above claim  follows from the stochastic integral representation
\begin{equation}
Z= \int_{[0,1]\times \R_0\times \R} \big(|c_0 y|^p v\big) \, \Theta(ds,dy,dv).
\end{equation} 
As for Theorem \ref{maintheorem}(iii), we remark that for values of $\alpha$ close to $k-1/p$ or $k-1/\beta $, 
the function $g^{(k)}$ explodes at $0$. This leads to unboundedness of the path 
$(F_t(\omega))_{t \in [0,1]}$. Nevertheless,  the limiting random variable in \eqref{part3}
is still finite.

\begin{rem}\label{rem-const} (Definition of $\tilde \sigma$)
In order to introduce the constant $\tilde \sigma$  appearing in Theorem~\ref{sec-order}(i)
we set 
\begin{align}
\kappa = {}&\frac{k_\alpha^{1/(k-\alpha)} }{k-\alpha} 
\int_0^{\infty} \Phi (y) y^{-1-1/(k-\alpha )} \,dy,
\end{align}
where   $\Phi(y):= \E[|\Delta_{k,k} \widetilde{X}+y|^p - |\Delta_{k,k} \widetilde{X}|^p ] $, $y\in\R$,    $\widetilde{X}_t$ is a linear fractional stable motion  defined in \eqref{flm1} with $c_0=1$ and $L$ being a standard symmetric $\beta$-stable L\'evy process, and $k_\alpha = \alpha (\alpha-1)(\alpha-2)\cdots (\alpha-k+1)$. 
In addition, set 
\begin{equation}\label{def-tau-rho}
 \tau_{\rho}=  \frac{\rho-1}{\Gamma(2-\rho)|\cos(\pi \rho/2)|},\qquad \text{for all }\rho\in (1,2),
\end{equation}
where $\Gamma$ denotes the gamma function. Then, the scale parameter $\tilde \sigma$
is defined via
\begin{equation}
\tilde \sigma = |c_0|^p \sigma^p  \Big(\frac{\tau_\beta}{\tau_{(1-\alpha)\beta}}\Big)^{\frac{1}{(1-\alpha)\beta}}\kappa .
\end{equation}
The function $\Phi(y)$  
can be computed explicitly, see \eqref{rep-H-est-2}.  
This representation shows, in particular, that $\Phi (y)>0$ for all $y>0$, and hence the limiting variable 
$S$
in Theorem \ref{sec-order}(i) is not degenerate, because $\tilde \sigma >0$.
\qed
\end{rem}

\begin{rem} \label{rem3} \rm
Theorem~5.1 of \cite{BCI} studies the first order asymptotic of the power variation  of some fractional fields $(X_t)_{t\in \R^d}$. In the  case  $d=1$, they consider  fractional L\'evy processes $(X_t)_{t\in \R}$ of the form 
\begin{equation}\label{eq:7237}
 X_t= \int_{\R} \big\{|t-s|^{H-1/2} - |s|^{H-1/2}\big\}\,dL_s
\end{equation} where $L$ is a truncated $\beta$-stable L\'evy process. This  setting is close to fit into the framework of the present paper \eqref{def-of-X-43} with $\alpha= H-1/2$ except for the fact that  the stochastic integral \eqref{eq:7237} is over the hole real line. However, the proof of Theorem~\ref{maintheorem}(i) still holds for $X$ in \eqref{eq:7237} with  the obvious modifications of $h_k$ and $V_m$ in \eqref{def-h-13} and \eqref{part1}, respectively.  
Notice also that   \cite{BCI} considers  the power variation along the subsequence $2^n$, which corresponds to dyadic partitions, and their setting includes second order increments ($k=2$).  For $p<\beta$,  Theorem~5.1 of \cite{BCI} claims that 
$2^{n \alpha p} V(p;2)_{2^n}\to C$ almost surely, where $C$ is a positive constant. However, this contradicts Theorem~\ref{maintheorem}(i) together with the remark following it, namely that,   convergence in probability can not take place under the conditions of  Theorem~\ref{maintheorem}(i), not even trough a subsequence. It seems that the   last three lines of the   proof of  \cite[Theorem~5.1]{BCI}  are erroneous, since the derived   estimates are not uniform in the parameters which are required for the stated conclusion to hold, see  \cite[p.~372]{BCI}.  
\qed
\end{rem}

\subsection{Statistical applications} \label{stat}
The asymptotic theory of this paper has a variety of potential applications in statistics. We have
seen  in Section~ \ref{sec3a} that in the framework of a symmetric fractional $\beta$-stable motion
$(\widetilde{X}_t)_{t\geq 0}$ the parameter $H=\alpha +1/\beta \in(1/2,1)$ is the self-similarity 
index of the process $\widetilde{X}$ while $\alpha>0$ determines the H\"older continuity index of 
$\widetilde{X}$. 

Having understood the role of the parameters $\alpha>0$ and $H=\alpha+1/\beta \in (1/2,1)$ from 
the modelling perspective, it is obviously important to investigate estimation methods for these parameters when the underlying process is given by  $(X_t)_{t\geq 0}$.  We start with a direct estimation procedure that identifies the convergence 
rates in Theorem~\ref{maintheorem}(i)-(iii).  We apply these convergence results only for 
$k=1$. Since we have assumed that $\alpha>0$ and $H=\alpha+1/\beta \in (1/2,1)$, it must hold that 
$\beta \in (1,2)$. Notice also that the condition $p>1$ is required in  Theorem~\ref{maintheorem}(i) when $k=1$. Now, we define the statistic 
\begin{align} \label{statistic}
S_{\alpha, \beta} (n,p):= -\frac{\log V(p)_n}{\log n} \qquad \text{with} \qquad V(p)_n= V(p;1)_n.
\end{align} 
Assume that the underlying L\'evy motion $L$ is symmetric $\beta$-stable, in which case 
Theorems~\ref{maintheorem}(i)-(iii) are all applicable. Then, under assumptions of 
Theorem~\ref{maintheorem},   for $p>1$ it holds that 
\begin{align} \label{logscale}
S_{\alpha, \beta} (n,p) \toop  S_{\alpha, \beta} (p)  :=
\left \{
\begin{array} {ll}
\alpha p: & \alpha < 1-1/p \text{ and } p>\beta \\
pH-1: &\alpha < 1-1/\beta \text{ and } p<\beta \\
p -1:  & \alpha > 1-1/\max(p,\beta)
\end{array} 
\right. 
\end{align}
for any fixed $p \in (1,2)$. Indeed, the result of Theorem~\ref{maintheorem}(i) implies that 
\[
\frac{\alpha p \log n + \log V(p)^n}{\log n} \stab 0 \qquad \Rightarrow \qquad
\frac{\alpha p \log n + \log V(p)^n}{\log n} \toop 0,
\]
which explains the first line in  \eqref{logscale}. Similarly, Theorem~\ref{maintheorem}(ii) and  (iii)
imply the other convergence results of \eqref{logscale}. At this stage we remark that the limit 
$S_{\alpha, \beta} (p)$ is a piecewise linear function in $p \in (1,2)$ with different slopes. Indeed,
it suffices to only consider $p \in (1,2)$ to uncover all three slopes.
Now, it is natural to consider 
the $L^2$-distance between the observed scale function $S_{\alpha, \beta} (n,p)$ and the theoretical
$S_{\alpha, \beta} (p)$:
\begin{align} \label{scaleest}
( \hat{\alpha}_n, \hat{\beta}_n) := \text{argmin}_{\alpha>0, ~\alpha + 1/\beta \in (1/2,1)} 
\int_1^2 \left( S_{\alpha, \beta} (n,p) -   S_{\alpha, \beta} (p) \right)^2 dp. 
\end{align} 
In practice the integral in \eqref{scaleest} needs to be discretised.   
This approach is somewhat similar to the estimation method proposed in \cite{glt15}. 

If we are interested in the estimation of the self-similarity parameter  $H=\alpha+1/\beta \in (1/2,1)$,
then there is an alternative estimator based on a ratio statistic. Recalling that  $\beta \in (1,2)$, 
we deduce for any $p \in (0,1]$
\begin{align} \label{ratiostatistic}
R(n,p) := \frac{\sum_{i=2}^n |X_{\frac{i}{n}} - X_{\frac{i-2}{n}}|^p}{\sum_{i=1}^n |X_{\frac{i}{n}} - X_{\frac{i-1}{n}}|^p} \toop 2^{pH}  
\end{align}
by a direct application of Theorem~\ref{maintheorem}(ii). Thus, we immediately conclude that 
\[
\hat{H}_n := \frac{\log R(n,p)}{ p \log 2} \toop H. 
\]
This type of idea is rather standard in the framework of fractional Brownian motion with Hurst 
parameter $H$.  Theorem~\ref{sec-order}(i) suggests that the statistic  $\hat{H}_n$ has convergence rate $n^{1 - 1/(1-\alpha)\beta}$ when $p\in (0,1/2]$. By Theorem~\ref{sec-order}(ii)
this convergence rate can be improved to $\sqrt{n}$ when the first order increments are replaced by 
$k$th order increments, $k\geq 2$, in the definition of the statistic $R(n,p)$.

\section{Preliminaries} \label{secPrel}

Throughout the following sections all positive constants will be denoted by $K$, although they may change from line to line. 
Also the notation might change from subsection to subsection, but the meaning will be clear from the context. Throughout all the next sections we assume, without loss of generality,  that $c_0=\delta=\sigma= 1$. Recall that  $g(t)=g_0(t)=0$ for all $t<0$ by assumption.

For a sequences of random variables  $(Y_n)_{n\in \N}$ defined on the probability space $(\Omega,\mathcal F,\P)$ 
we write $Y_n\stab Y$ if $Y_n$ converges $\f$-stably in law to $Y$.  That is, $Y$ is a random variable defined on an extension  of $(\Omega,\mathcal F,\P)$ such that 
for all $\f$-measurable random variables $U$ we have the joint convergence in law  
\[
(Y_n,U)  \schw (Y,U).
\] 
In particular, $Y_n\stab Y$ implies $Y_n\schw Y$.  For $A\in \f$ we will say that $Y_n\stab Y$ on $A$, if $Y_n\stab Y$ under $\P_{|A}$, where  $\P_{|A}$ denotes the conditionally probability measure $B\mapsto \P(B \cap A)/\P(A)$, when $\P(A)>0$.  We refer to the work \cite{Aldous,ren} for a detailed exposition of stable convergence. In addition,  $\toop$ will denote convergence in probability. We will  write $V(Y,p;k)_n=\sum_{i=k}^n |\Delta^n_{i,k} Y|^p$ when we want to stress that the power variation is built from a process $Y$. 
On the other hand, when  $k$ and $p$ are fixed we will sometimes write $V(Y)_n=V(Y,p;k)_n$ to simplify the notation. 

First of all, it follows from \cite[Theorem~7]{RajRos} that the process $X$ introduced in 
\eqref{def-of-X-43}  is well-defined if and only if for all $t\geq 0$, 
\begin{equation}\label{sdkfhsd;kf}
\int_{-t}^\infty \int_\R \Big(\big| f_t(s)x\big|^2\wedge 1\Big)\,\nu(dx)\,ds<\infty,
\end{equation}
where $f_t(s)=g(t+s)-g_0(s)$.
By adding  and subtracting  $g$ to $f_t$ it follows by assumption (A) and the  mean value theorem  that  $f_t\in L^\theta(\R_+)$ and $f_t$ is  bounded. 
For all $\epsilon>0$, assumption (A) implies that 
\begin{equation}\label{eq:74}
\int_{\R} (|y x|^2\wedge 1) \,\nu(dx)\leq K\Big( \1_{\{|y|\leq 1\}} |y|^\theta +\1_{\{|y|>1\}}|y|^{\beta+\epsilon}\Big), 
\end{equation}
which shows  \eqref{sdkfhsd;kf} since $f_t\in L^\theta(\R_+)$  is bounded.

Now, for all $n,i\in \N$, we set
\begin{align}\label{def-g-i-n}
g_{i,n}(x) ={}&  \sum_{j=0}^k (-1)^j \binom{k}{j} g\big((i-j)/n-x\big),\\ 
\label{def-h-i-n}
h_{i,n}(x) = {}& \sum_{j=0}^k (-1)^j \binom{k}{j} \big((i-j)/n-x\big)_+^\alpha,\\
\label{def-g-n}
g_n(x)={}& n^{\alpha}g(x/n), \qquad x\in \R.
\end{align}
In addition,  for each function $\phi\!:\R\to \R$ define  $D^k \phi\!:\R\to\R$  by 
\begin{align} \label{dkdef}
D^k \phi(x)=\sum_{j=0}^k (-1)^j \binom{k}{j} \phi(x-j),\qquad x\in \R.
\end{align}
In this notation the function $h_k$, defined in \eqref{def-h-13}, is given by $h_k=D^k \phi$ with $\phi: x\mapsto x_+^\alpha$.

%
%
%

\begin{lem} \label{helplem}
Assume that $g$ satisfies condition (A). Then we obtain the following estimates 
\begin{align}
\label{lemest1} |g_{i,n}(x)|&\leq K (i/n -x)^{\alpha}, \qquad x \in [(i-k)/n,i/n], \\
\label{lemest2} |g_{i,n}(x)|&\leq K n^{-k}((i-k)/n -x)^{\alpha-k} , \qquad x \in (i/n - 1, (i-k)/n), \\
\label{lemest3} |g_{i,n}(x)|&\leq K n^{-k}
\left( \1_{[(i-k)/n - 1, i/n - 1]}(x)  +
g^{(k)} ((i-k)/n -x) \1_{(-\infty, (i-k)/n - 1)} (x)  \right),\qquad  \\ 
& x \in (- \infty, i/n - 1].
\end{align}
The same estimates trivially hold for the function $h_{i,n}$. 
\end{lem}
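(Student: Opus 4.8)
\textbf{Proof strategy for Lemma~\ref{helplem}.}
The plan is to exploit the fact that $D^k$ acting on a smooth function can be written exactly as an iterated (or $k$-fold) difference, and then to pass to an integral representation via Taylor's theorem with integral remainder. Concretely, for a function $\phi\in C^k$ on the relevant interval one has the classical identity
\[
D^k\phi(x)=(-1)^k\int_{[0,1]^k} \phi^{(k)}\big(x-u_1-\cdots-u_k\big)\,du_1\cdots du_k,
\]
which expresses the $k$th order increment as an average of $\phi^{(k)}$ over the simplex-like region of total width $k$. Applying this with $\phi(\cdot)=g\big(i/n-\cdot\,\big)$ (so that $\phi^{(k)}$ picks up $g^{(k)}$ evaluated at shifted arguments, up to sign) turns $g_{i,n}(x)$ into an integral of $g^{(k)}$ over arguments ranging in an interval of length $k/n$ near $i/n-x$. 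The bound $|g^{(k)}(t)|\le Kt^{\alpha-k}$ on $(0,\delta)$ from assumption (A) (recall $\delta=1$ by the normalization in Section~\ref{secPrel}) is exactly what is needed to control the integrand.

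The argument then splits into the three ranges. For \eqref{lemest1}, where $x\in[(i-k)/n,i/n]$, the increments straddle the singularity of $g$ at $0$, so the integral representation is not directly available on the whole region; instead I would bound $g_{i,n}(x)$ termwise, using $g(t)\sim c_0 t^\alpha$ (hence $|g(t)|\le Kt^\alpha$ near $0$) on each of the $k+1$ summands $g((i-j)/n-x)$, noting each argument lies in $[0,k/n]\subset[0,\delta)$ and is at most $i/n-x$ up to a bounded multiplicative constant; summing gives $|g_{i,n}(x)|\le K(i/n-x)^\alpha$. For \eqref{lemest2}, where $x\in(i/n-1,(i-k)/n)$, all $k+1$ arguments $(i-j)/n-x$ are strictly positive and bounded (they lie in $((i-k)/n-x,\,(i-k)/n-x+k/n)$, with upper end $<1=\delta$), so the integral representation applies; the integrand $g^{(k)}$ is evaluated at arguments $\ge (i-k)/n-x$, giving $|g^{(k)}|\le K((i-k)/n-x)^{\alpha-k}$, and the integral over a region of volume $n^{-k}$ yields the stated bound. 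For the last estimate \eqref{lemest3}, where $x\le i/n-1$, one again uses the integral representation; now the arguments of $g^{(k)}$ range over an interval of length $k/n$ ending near $(i-k)/n-x\ge k/n$. When $x\in[(i-k)/n-1,i/n-1]$ the relevant arguments may dip below $\delta=1$, so I bound $|g^{(k)}|$ there by a constant (it is bounded on any interval bounded away from $0$, and on $(0,\delta)$ by $Kt^{\alpha-k}$ which is integrable issues aside bounded after the first increment eats one power), producing the indicator term; when $x<(i-k)/n-1$ all arguments exceed $\delta=1$, so monotonicity of $|g^{(k)}|$ on $(\delta,\infty)$ (from (A)) gives $|g^{(k)}(\text{arg})|\le g^{(k)}((i-k)/n-x)$, and pulling this out of the integral of volume $n^{-k}$ gives the second term.

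The main obstacle is the careful bookkeeping in the transition region and the correct handling of the singularity at $0$: for \eqref{lemest1} and the indicator part of \eqref{lemest3}, the $k$th derivative $g^{(k)}$ need not be integrable up to $0$, so one cannot naively use the full integral representation across the singularity and must instead argue termwise (or integrate only $j<k$ of the $k$ differences and treat the innermost difference by the $\sim c_0t^\alpha$ asymptotics). Once the region is split correctly, each piece is a routine estimate: the only inputs are $|g(t)|\le Kt^\alpha$ near $0$, $|g^{(k)}(t)|\le Kt^{\alpha-k}$ on $(0,1)$, boundedness and monotonicity of $|g^{(k)}|$ on $(1,\infty)$, and the exact $k$-fold difference formula above. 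Finally, the assertion for $h_{i,n}$ is immediate since $\phi(t)=t_+^\alpha$ satisfies all the same bounds on $(0,\infty)$ (indeed with equality up to constants), so the identical argument applies verbatim.
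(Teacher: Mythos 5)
Your proposal is correct and follows essentially the same route as the paper: \eqref{lemest1} is obtained termwise from $g(t)\sim c_0t^\alpha$ near zero, while \eqref{lemest2} and \eqref{lemest3} come from a $k$th order Taylor-type argument — your iterated-integral representation of the $k$th finite difference is just the precise form of the Taylor expansion the paper invokes — combined with $|g^{(k)}(t)|\le K t^{\alpha-k}$ on $(0,\delta)$ and the boundedness/monotonicity of $|g^{(k)}|$ beyond $\delta$. The spurious factor $(-1)^k$ in your difference formula is immaterial, since only absolute values are being estimated.
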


\begin{proof}
The inequality \eqref{lemest1} follows directly from condition \eqref{kshs} of (A). The second inequality
\eqref{lemest2} is a straightforward consequence of Taylor expansion of order $k$ and the condition
$|g^{(k)}(t)|\leq K t^{\alpha-k}$ for $t\in (0,1)$. The third inequality \eqref{lemest3} follows again through Taylor expansion and the fact that the function $g^{(k)}$ is decreasing on $(1, \infty)$. 
\end{proof}

\section{Proof of Theorem \ref{maintheorem}}\label{proofs-w3lkhj}
\setcounter{equation}{0}
\renewcommand{\theequation}{\thesection.\arabic{equation}}

In this section we will prove the assertions of Theorem \ref{maintheorem}.

\subsection{Proof of Theorem~\ref{maintheorem}(i)} 

The proof of Theorem~\ref{maintheorem}(i) is divided into the following three steps.
In Step~(i) we show Theorem~\ref{maintheorem}(i) for the compound Poisson case, which stands for the treatment of big jumps of $L$. 
Step (ii) consists of an approximating lemma, which proves that the small jumps of $L$ are asymptotically negligible.
Step (iii) combines the previous results to obtain the general theorem. 

Before proceeding with the proof  we will need the following preliminary lemma. Let $\{x\}:=x-\lfloor x\rfloor\in [0,1)$ denote the fractional
part of $x\in \R$. The lemma below seems to be essentially known (cf.\ \cite{Jacod-round-off,Tukey}), however, we have not been able 
to find this particular formulation. Therefore it is stated below for completeness. 

\begin{lem}\label{frac-part-lem}
For $d\geq 1$ let $V=(V_1,\dots,V_d)$ be an absolutely continuous random vector in $\R^d$ with a density $v\!:\R^d\to \R_+$. Suppose that there exists an open convex set $A\subseteq \R^d$ such that $v$ is continuous differentiable on $A$ 
and vanish outside $A$.  Then, as $n\to \infty$, 
\begin{equation}\label{sflj}
\big(\{n V_1\},\dots,\{n V_d\}\big)\stab U=\big(U_1,\dots,U_d\big)
\end{equation}
 where $U_1,\dots,U_d$ are independent   $\mathcal U([0,1])$-distributed random variables which are independent of $\f$. 
\end{lem}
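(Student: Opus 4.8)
\textbf{Proof plan for Lemma~\ref{frac-part-lem}.}

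The plan is to reduce the statement to the one-dimensional Tukey/rounding result via the characterization of stable convergence by characteristic functions. Recall that $(\{nV_1\},\dots,\{nV_d\})\stab U$ with $U$ independent of $\f$ is equivalent to showing that for every bounded $\f$-measurable random variable $W$ and every $\lambda\in\R^d$,
\[
\E\Big[W\,\exp\Big(i\sum_{j=1}^d \lambda_j \{nV_j\}\Big)\Big]\longrightarrow \E[W]\,\prod_{j=1}^d \E\big[\exp(i\lambda_j U_j)\big],
\]
where $U_1,\dots,U_d$ are i.i.d.\ $\mathcal U([0,1])$. Since the density $v$ is supported on the bounded-looking (open convex) set $A$ and is $C^1$ there, it suffices by a standard density/approximation argument to prove the convergence for $W$ of the form $W=\psi(V)$ with $\psi$ continuous and bounded (by conditioning, the general $\f$-measurable case follows because the joint law of $(W, \{nV_j\}_j)$ is determined by test functions of this product type, and $\{nV\}$ is $\sigma(V)$-measurable, hence $\f$-measurable).

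First I would set up the key computation. With $e_\lambda(x):=\exp(i\sum_j\lambda_j\{x_j\})$, a $1$-periodic (in each coordinate) bounded function on $\R^d$, we must show
\[
\int_{\R^d}\psi(x)\,e_\lambda(nx)\,v(x)\,dx \longrightarrow \Big(\int_{\R^d}\psi(x)v(x)\,dx\Big)\prod_{j=1}^d\Big(\int_0^1 e^{i\lambda_j u_j}\,du_j\Big).
\]
This is exactly a Riemann--Lebesgue / equidistribution statement: as $n\to\infty$, the rapidly oscillating periodic function $e_\lambda(n\,\cdot)$ ``averages out'' against the fixed $L^1$ weight $\psi v$, and its mean over a period is $\prod_j\int_0^1 e^{i\lambda_j u_j}du_j$. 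The cleanest route is to expand $e_\lambda$ in its (multiple) Fourier series $e_\lambda(x)=\sum_{m\in\Z^d}\widehat{c}_m e^{2\pi i\langle m,x\rangle}$ with $\widehat{c}_0=\prod_j\int_0^1 e^{i\lambda_j u_j}du_j$, and then observe that for each $m\neq 0$, $\int\psi(x)v(x)e^{2\pi i n\langle m,x\rangle}dx\to 0$ by the Riemann--Lebesgue lemma applied to the $L^1$ function $\psi v$ (here is where $v\in L^1$ and, more than enough, $v\in C^1$ with bounded support is used; one does not even need differentiability, merely $L^1$, but the hypotheses certainly suffice). Dominated convergence in the sum over $m$ — justified since $\sum_m|\widehat{c}_m|\cdot\|\psi v\|_{L^1}<\infty$ is not immediate, so instead I would split off a finite block $\{|m|\le M\}$, handle it termwise, and bound the tail $\{|m|>M\}$ uniformly using that $\psi v\in C^1_c$ forces $|\widehat{\psi v}(n m)|\lesssim |nm|^{-1}$, or more robustly just use the $1$-dimensional sharpening below to avoid Fourier-series tail issues entirely.

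Alternatively, and this is the version I would actually write to sidestep any delicacy with multidimensional Fourier tails: fix a large $N$ and partition $[0,1)^d$ (periodized) into the cubes $Q_{\ell}=\prod_j[\ell_j/N,(\ell_j+1)/N)$; on the preimage cubes of side $1/(nN)$ the $C^1$ function $\psi v$ is nearly constant (error $O(1/(nN))$ uniformly, using $\|\nabla(\psi v)\|_\infty<\infty$), so $\int\psi(x)v(x)e_\lambda(nx)\,dx$ equals, up to $O(1/N)+o_n(1)$, the sum over cells of $(\text{cell average of }\psi v)\times\int_{Q_\ell}e_\lambda(y)\,dy\cdot|Q_\ell|/|Q_\ell|$, which telescopes to $\big(\int\psi v\big)\cdot\prod_j\int_0^1 e^{i\lambda_j u_j}du_j+O(1/N)$; letting $n\to\infty$ then $N\to\infty$ gives the claim. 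The main obstacle is therefore not conceptual but bookkeeping: making the ``$\psi v$ is nearly constant on fine cubes'' step quantitative and uniform, and reducing the general bounded $\f$-measurable $W$ to $W=\psi(V)$ with $\psi\in C_b$; the former needs only $\|\nabla(\psi v)\|_\infty\le\|\nabla\psi\|_\infty\|v\|_\infty+\|\psi\|_\infty\|\nabla v\|_{L^\infty(A)}<\infty$ (finite because $\bar A$ is covered by the support assumption), and the latter is the standard fact that characteristic-function convergence against a separating class of $W$'s suffices for stable convergence. Everything else is routine.
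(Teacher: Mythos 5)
Your overall strategy (reduce to test variables of the form $\psi(V)$, then prove an equidistribution statement for $\int \psi(x)v(x)e_\lambda(nx)\,dx$ by a fine-cube or Fourier argument) is close in spirit to the paper's proof, but there is a genuine gap in the step where you quantify the oscillation: both of your routes rest on the claim that $\psi v$ is globally $C^1$ with compact support, or at least that $\|\nabla(\psi v)\|_\infty\le \|\nabla\psi\|_\infty\|v\|_\infty+\|\psi\|_\infty\|\nabla v\|_{L^\infty(A)}<\infty$ ``because $\bar A$ is covered by the support assumption''. This does not follow from the hypotheses and is false in the intended application. The lemma only assumes $v\in C^1(A)$ and $v=0$ off $A$; it does not assume $A$ bounded, nor $v$ or $\nabla v$ bounded on $A$, nor that $v$ extends continuously to $\partial A$. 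In the paper's use of the lemma, $V=(T_1,\dots,T_d)$ are the ordered jump times, $A=\{0<x_1<\dots<x_d\}$ is unbounded and the density jumps from a strictly positive value to $0$ across $\partial A$, so $\psi v$ is not even continuous on $\R^d$, let alone $C^1_c$. Consequently the Fourier tail bound $|\widehat{\psi v}(nm)|\lesssim|nm|^{-1}$ is unjustified, and in the cube argument the ``nearly constant on cubes of side $1/(nN)$'' estimate fails on every cube meeting $\partial A$ (and, without extra bounds, possibly near infinity or near $\partial A$ where $\nabla v$ may blow up). The conclusion can still be rescued, but only after an additional localization: e.g.\ first restrict to a compact subset of $A$ carrying all but $\epsilon$ of the law of $V$ (using $\P(V\in A)=1$ and inner regularity), work with $\psi$ supported there, and control the boundary cubes separately; as written, the key quantitative step does not go through.

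This localization is exactly what the paper builds into its choice of test functions: it verifies convergence of $\E[f(V,\{nV\})]$ only for $C^1$ functions $f$ vanishing outside a closed ball contained in $A\times\R^d$, so that $\phi=f\cdot v$ is genuinely $C^1$ with compact support; a cube decomposition and the mean value theorem then give the uniform bound \eqref{eq-est-f-g} of order $\rho=1/n$, and Proposition~2(D$''$) of \cite{Aldous} upgrades convergence against this restricted class of test functions to the stated $\f$-stable convergence (the limit being independent of $\f$, testing against $\sigma(V)$-measurable variables suffices, which also covers your conditioning reduction). So the missing ingredient in your proposal is precisely this restriction of the test class to compact subsets of $A$ before any smoothness of $v$ is invoked, together with an argument (such as the Aldous criterion, or a tightness/approximation step) that this restricted class is rich enough to imply stable convergence.
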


\begin{proof}
For $x=(x_1,\dots,x_d)\in \R^k$ let $\{x\}=(\{x_1\},\dots,\{x_d\})$ be the fractional parts of its components. 
Let  $f:\R^d\times \R^d\to \R$ be a $C^1$-function, which vanishes outside some closed ball  in $A\times \R^d$.  We claim that 
for all $\rho>0$ there exists a constant 
$K>0$ such that  
\begin{align}\label{eq-est-f-g}
D_\rho:= \Big| \int_{\R^d}  f(x,\{ x/\rho \})\, v(x) \,dx-\int_{\R^k}\Big(\int_{[0,1]^d} f(x,u)\,du\Big)v(x)\,dx\Big|  \leq   K \rho. 
\end{align}
Indeed, by \eqref{eq-est-f-g} used for $\rho=1/n$ we obtain that 
\begin{equation}\label{sdkfh}
\E[f(V,\{n V\})]\to \E[f(V,U)]\qquad \text{as }n\to \infty,
\end{equation}
with $U=(U_1,\dots, U_d)$ given in the lemma. Moreover, due to \cite[Proposition~2(D'')]{Aldous},  \eqref{sdkfh}  implies the stable convergence  $\{n V\}\stab U$ as $n\to\infty$,  and the proof  is  complete. Thus, it only remains to prove the inequality \eqref{eq-est-f-g}. At this stage we use a similar 
technique as in  \cite[Lemma~6.1]{Jacod-round-off}. 
 Define   $\phi(x,u): =f(x,u)v(x)$. Then it holds by substitution that  
\begin{align}
\int_{\R^d}  f(x,\{  x/\rho \})v(x)\,dx= 
\sum_{j\in \Z^d} \int_{(0,1]^d} \rho^d \phi(\rho j + \rho u , u)\,du 
\end{align}
and
\begin{align}
\int_{\R^d}\Big(\int_{[0,1]^d} f(x,u)\,du\Big)v(x)\,dx={}& 
\sum_{j\in \Z^d}  \int_{[0,1]^d} \Big(\int_{(\rho j,\rho (j+1)]}\phi(x,u)\,dx\Big)\,du.
\end{align}
Hence, we conclude that  
\begin{align}
D_\rho\leq {}& \sum_{j\in \Z^d} \int_{(0,1]^d} \Big|\int_{(\rho j,\rho (j+1)]}\phi(x,u)\,dx-\rho^d \phi(\rho j + \rho u , u)\Big|\,du 
 \\  \leq {}& \sum_{j\in \Z^d} \int_{(0,1]^d} \int_{(\rho j,\rho (j+1)]}\Big|\phi(x,u)- \phi(\rho j + \rho u , u)\Big|\,dx\,du .
\end{align}
By mean value theorem there exists a positive constant $K$ and a compact set $B\subseteq \R^d\times \R^d$ such that for all $j\in \Z^d,\ x\in (\rho j,\rho(j+1)]$ and $u\in (0,1]^d$ we have 
\begin{equation}
\Big|\phi(x,u)- \phi(\rho j + \rho u , u)\Big|\leq K \rho \1_B(x,u). 
\end{equation}
Thus, 
$
D_\rho\leq K \rho \int_{(0,1]^d} \int_{\R^d}\1_{B}(x,u)\,dx\,du
$,
which shows \eqref{eq-est-f-g}. 
\end{proof}

\begin{proof}[Step~(i): The compound Poisson case]
Let $L=(L_t)_{t\in \R}$ be a compound Poisson process
and let $0\leq T_1<T_2<\ldots$ denote the jump times of the L\'evy process 
$(L_t)_{t\geq 0}$ chosen in increasing order.  Consider a fixed  $\epsilon>0$ and let $n\in \N$ satisfy $\varepsilon n  >4 k $. We define 
\begin{align}
\Omega_{\varepsilon}:= \Big\{\omega \in \Omega : {}& \text{for all }j\geq 1\text{ with $T_j(\omega)\in [0,1]$ we have  }|T_{j+1}(\omega )-T_{j}(\omega)|> \varepsilon/2 \\ {}& \text{and }\Delta L_s(\omega)=0\text{ for all }s\in [-\epsilon,\epsilon]\cup [1-\epsilon,1]\Big\}. 
\end{align}
Notice that $\P(\Omega_{\varepsilon}) \uparrow 1$ as  $\varepsilon \downarrow 0$. Now, we decompose for $i=k,\dots,n$
\[\label{rep-X-42}
\Delta_{i,k}^n X = M_{i,n,\varepsilon} + R_{i,n,\varepsilon}, 
\]
where 
\begin{align*}
M_{i,n,\varepsilon} = {}& 
\int_{\frac{i}{n}-\varepsilon}^{\frac{i}{n}} g_{i,n}(s)\,dL_s, \qquad \qquad R_{i,n,\varepsilon} = \int_{-\infty }^{\frac{i}{n}-\varepsilon} g_{i,n}(s)\,dL_s,
\end{align*}
and the function  $g_{i,n}$ is introduced in  \eqref{def-g-i-n}.
 The term $M_{i,n,\varepsilon}$ represents the dominating quantity, while $R_{i,n,\varepsilon}$ turns out to be negligible.

\noindent \textit{The dominating term:}
We claim that on $\Omega_\epsilon$ and as $n\to \infty$,
\begin{equation}\label{main-con}
 n^{\alpha p} \sum_{i=k}^n |M_{i,n,\epsilon}|^p\stab Z\qquad\text{where}\qquad Z=\sum_{m:\, T_m\in (0,1]} |\Delta L_{T_m}|^p V_m,
\end{equation}
where $V_m$, $m\geq 1$, are  defined in \eqref{part1}.
To show \eqref{main-con} let  $i_m=i_m(\omega,n)$ denote the random index such that $T_m\in ((i_m-1)/n, i_m/n]$. The following representation will be crucial:  On $\Omega_{\varepsilon}$ we have that
\begin{align}
\label{cru-decomp}
{}& n^{\alpha p} \sum_{i=k}^n | M_{i,n,\varepsilon}|^p =  V_{n,\varepsilon}\qquad\qquad \text{with}\\
{}&  V_{n,\varepsilon}  = n^{\alpha p} \sum_{m:\,T_m\in (0,1]} |\Delta L_{T_m}|^p \left(
\sum_{l=0}^{[\varepsilon n]+v_m} |g_{i_m+l,n}(T_m)|^p \right)
\end{align}
for some random indexes $v_m=v_m(\omega,n,\epsilon)\in \{-2,-1,0\}$ which are measurable with respect to $T_m$.
Indeed,  on $\Omega_\epsilon$ and for each $i=k,\dots,n$, $L$ has at most one jump in  $(i/n-\epsilon/2,i/n]$. For each $m\in \N$ with $T_m\in (0,1]$ we have  $T_m\in (i/n-\epsilon,i/n]$ if and only if $i_m\leq i <n(T_m+\epsilon)$  (recall that $\epsilon n > 4k $). Thus,
\begin{equation}\label{sldfj}
\sum_{i\in \{k,\dots,n\}:\, T_m\in (i/n-\epsilon,i/n]} | M_{i,n,\varepsilon}|^p=|\Delta L_{T_m}|^p\left(\sum_{l=0}^{[\varepsilon n]+v_m} |g_{i_m+l,n}(T_m)|^p \right)
\end{equation}
for some $T_m$-measurable random variable $v_m\in \{-2,-1,0\}$.
Thus, by summing \eqref{sldfj} over all  $m\in \N$ with $T_m\in (0,1]$,   \eqref{cru-decomp} follows. In the following we will show that 
\begin{equation}\label{con-Z-V-n}
V_{n,\epsilon} \stab Z
\qquad \text{as }n\to\infty. 
\end{equation}
For $d\geq 1$ it is well-known that  the random vector 
$(T_1,\dots,T_d)$ is absolutely continuous with a $C^1$-density   
on  the open convex set $A:=\{(x_1,\dots,x_d)\in \R^d\!: 0<x_1<x_2<\dots<x_d\}$,  which is vanishing outside $A$. Thus, by Lemma~\ref{frac-part-lem}  we have  
\begin{equation}\label{eq-con-T_k-U}
(\{nT_m\})_{m\leq d}\stab (U_m)_{m\leq d}\qquad \text{as } n\to \infty
\end{equation}
where  
$(U_i)_{i\in \N}$ are  i.i.d.\ $\mathcal U([0,1])$-distributed random variables. By \eqref{kshs} we may write  $g(x)=x^\alpha_+ f(x)$  where $f\!:\R\to\R$  satisfies $f(x)\to 1$ as $x\downarrow 0$. 
By definition of $i_m$ we have that $\{n T_m\}=nT_m-(i_m-1)$ and therefore  
for all $l=0,1,2,\dots$ and $j=0,\dots,k$,  
\begin{align}\label{eq-sdlfj-1}
  {}&  n^\alpha g\Big(\frac{l+i_m-j}{n}-T_m\Big)= n^\alpha \Big(\frac{l+i_m-j}{n}-T_m\Big)^\alpha_+ f\Big(\frac{l+i_m-j}{n}-T_m\Big)\\ \label{eq-sdlfj-2}  {}& \qquad  =\Big(l-j+(i_m-nT_m)\Big)^\alpha_+ f\Big(\frac{l-j}{n}+n^{-1}(i_m-nT_m)\Big)
  \\ {}&\qquad = \Big(l-j+1-\{nT_m\}\Big)^\alpha_+ f\Big(\frac{l-j}{n}+n^{-1}(1-\{nT_m\})\Big).
\end{align}
By  \eqref{eq-con-T_k-U}, $(U_m)_{m\leq d}\eqschw (1-U_m)_{m\leq d}$ 
and $f(x)\to 1$ as $x\downarrow 0$  we obtain that  
\begin{equation}\label{con-g-U}
\Big\{n^\alpha g\Big(\frac{l+i_m-j}{n}-T_m\Big)\Big\}_{l,m\leq d}
\stab \Big\{\big(l-j+U_m\big)^\alpha_+\Big\}_{l,m\leq d}  \qquad \text{as } n\rightarrow \infty. 
\end{equation}
 Eq.~\eqref{con-g-U} implies that 
\begin{align}\label{stab-con-g-23}
\big\{n^\alpha  g_{i_m+l,n}(T_m)\big\}_{l,m\leq d} \stab \big\{h_k(l+U_m)\big\}_{l,m\leq d},
\end{align}
with $h_k$ being defined at \eqref{def-h-13}. Due to the $\f$-stable convergence in \eqref{stab-con-g-23}  we obtain 
by the continuous mapping theorem that for each fixed $d\geq 1$  and as $n\to \infty$, 
\begin{align}
{}& V_{n,\epsilon,d}:=n^{\alpha p} \sum_{m:\,m\leq d,\,T_m\in [0,1]} |\Delta L_{T_m}|^p \left(
\sum_{l=0}^{[\varepsilon d]+v_m} |g_{i_m+l,n}(T_m)|^p \right)
\\ {}& \qquad  \stab Z_{d}=\sum_{m:\, m\leq d,\, T_m\in [0,1]} |\Delta L_{T_m}|^p \left( \sum_{l=0}^{[\varepsilon d]+v_m} | h_k(l+U_m)|^p \right).\label{eq-erlkj}
\end{align}
Moreover, for $\omega\in \Omega$ we have as $d\to \infty$,
\begin{equation}
Z_{d}(\omega)\uparrow Z(\omega).
\end{equation}
Recall that  $|h_k(x)|\leq K (x-k)^{\alpha-k}$ for $x>k+1$, which implies that $Z<\infty$ a.s.\ since $p(\alpha-k)<-1$.
For all $l\in \N$ with $k\leq l\leq n$, we have 
\begin{equation}\label{eq:7365}
n^{\alpha p } |g_{i_m+l,n}(T_m)|^p \leq K  | l-k| ^{(\alpha-k)p },
\end{equation}
due to \eqref{lemest2} of Lemma \ref{helplem}. 
For all $d\geq 0$ set  $C_d=\sum_{m>d:\, T_m\in [0,1]} |\Delta L_{T_m}|^p$ and note that $C_d\to 0$ a.s.\ as $d\to \infty$ since $L$ is a compound Poisson process. 
By \eqref{eq:7365} we have  
\begin{align}
 |V_{n,\epsilon}- V_{n,\epsilon,d}|
\leq {}& K\Big( C_d+ C_0 \sum_{l=[\varepsilon d]-1}^{\infty} |l-k|^{p(\alpha-k)}\Big) \to 0 \qquad \text{as }d\to \infty 
\end{align}
since $p(\alpha-k)<-1$. 
Due to the fact that  $n^{\alpha p} \sum_{i=k}^n | M_{i,n,\varepsilon}|^p=V_{n,\epsilon}$ a.s.\ on $\Omega_\epsilon$ and 
$V_{n,\epsilon}\stab Z$, it follows that $n^{\alpha p} \sum_{i=k}^n | M_{i,n,\varepsilon}|^p\stab Z$ on $\Omega_\epsilon$, since $\Omega_\epsilon\in \f$. This proves \eqref{main-con}.

\noindent 
\textit{The rest term:}
In the following we will show that 
\begin{equation}\label{rest-1}
n^{\alpha p} \sum_{i=k}^n | R_{i,n,\epsilon}|^p\toop 0\qquad \text{as } n\to \infty. 
\end{equation}
 The fact that  the random variables in \eqref{rest-1} are  usually not integrable makes the proof of \eqref{rest-1} considerably   more complicated.
Similarly to  \eqref{lemest3} of Lemma~\ref{helplem} we have that 
\begin{equation}\label{est-g-34}
n^k |g_{i,n}(s)| \1_{\{s\leq i/n-\epsilon\}}\leq K\big(\1_{\{s\in [-1,1]\}}+
\1_{\{s<-1\}} |g^{(k)}(-s)|\big)=:\psi(s) 
\end{equation}
where $K=K_\epsilon$. 
We will use the function $\psi$ several times in the proof of  \eqref{rest-1}, which will be divided into the two special cases $\theta\in (0,1]$ and $\theta\in (1,2]$. 

Suppose first that  $\theta\in (0,1]$. 
To show \eqref{rest-1} it suffices to prove  that 
\begin{equation}\label{we-want-52-1}
\sup_{n\in \N,\, i\in \{k,\dots,n\}} n^k|R_{i,n,\epsilon}|<\infty\qquad \text{a.s.}
\end{equation}
 since $\alpha<k-1/p$. 
 To show \eqref{we-want-52-1} 
we will  first prove that 
\begin{equation}\label{lshlfsjl}
\int_\R \int_{\R} \Big(|\psi(s)x|\wedge 1\Big)\,\nu(dx)\,ds<\infty.
\end{equation}
Choose $\tilde K$ such that $\psi(x)\leq \tilde K$ for all $x\in \R$. For $u\in [-\tilde K,\tilde K]$ we have that 
\begin{align} 
\int_{\R} \Big(|u x|\wedge 1\Big)\,\nu(dx) \leq   {}&   K \int_1^\infty \Big( |xu|\wedge 1 \Big)x^{-1-\theta}\,dx
\\ 
\leq {}& 
\begin{cases}
K |u|^\theta   & \theta\in (0,1) \\ 
K |u|^\theta\log(1/u)  & \theta =1, 
\end{cases}\label{lsfjdslfhl}
 \end{align}
 where we have used that $\theta\leq 1$. 
By \eqref{lsfjdslfhl} applied 
to $u=\psi(s)$ and assumption (A) it follows that \eqref{lshlfsjl} is satisfied. 
Since $L$ is a symmetric compound Poisson process we can find a Poisson random measure $\mu$ with compensator $\lambda \otimes \nu$ such that for all $-\infty<u<t<\infty$, $L_t-L_u=\int_{(u,t]\times \R} x\,\mu(ds,dx)$.
Due to \cite[Theorem~10.15]{k83}, \eqref{lshlfsjl} ensures the existence  of the stochastic 
integral $\int_{\R\times \R} |\psi(s)x| \,\mu(ds,dx)$. Moreover, $\int_{\R\times \R} |\psi(s)x| \,\mu(ds,dx)$ can be regarded as an $\omega$ by $\omega$  integral with respect to the measure $\mu_\omega$. 
Now, we have that 
\begin{align}\label{eq:883}
 | n^k R_{i,n,\epsilon} |\leq      \int_{(-\infty,i/n-\epsilon]\times \R}  \big| n^k g_{i,n}(s)x \big|\, \mu(ds,dx)  \leq \int_{\R\times \R} |\psi(s)x| \,\mu(ds,dx)<\infty, \qquad 
\end{align} 
which shows \eqref{we-want-52-1}, since the right-hand side of \eqref{eq:883} does not depend on $i$ and $n$.  
 
Suppose that  $\theta\in (1,2]$. 
Similarly as  before it suffices to show that 
\begin{equation}\label{we-want-52}
\sup_{n\in \N,\, i\in \{k,\dots,n\}} \frac{n^k|R_{i,n,\epsilon}| }{(\log n)^{1/q}}<\infty\qquad \text{a.s.}
\end{equation}
where $q>1$ denotes the conjugated number to $\theta>1$ determined by $1/\theta+1/q=1$.  
In the following we will show \eqref{we-want-52}  using the majorizing measure techniques developed in \cite{Marcus-Rosinski}. In fact, our  arguments are closely related to their Section~4.2.   
Set $T=\{(i,n)\!: n\geq k,\,i=k,\dots, n\}$. For $(i,n)\in T$ we have 
\begin{equation}
\frac{n^k|R_{i,n,\epsilon}| }{(\log n)^{1/q}}=\Big|\int_\R \zeta_{i,n}(s)\,dL_s\Big|, \qquad 
\zeta_{i,n}(s):=\frac{n^k}{(\log n)^{1/q}} g_{i,n}(s)\1_{\{s\leq i/n-\epsilon\}}.
\end{equation}
For $t=(i,n)\in T$ we will sometimes write $\zeta_t(s)$ for $\zeta_{i,n}(s)$. 
Let  $\tau\!: T\times T\to \R_+$ denote the metric given by 
\begin{equation}
\tau\big((i,n),(j,m)\big)=\begin{cases}
\log(n-k+1)^{-1/q}+\log(m-k+1)^{-1/q} & (i,n)\neq (j,l) \\ 0 & (i,n)=(j,l). 
\end{cases}
\end{equation}
Moreover, let $m$ be the probability measure on $T$ given by  $m(\{(i,n)\})=K n^{-3}$ for a suitable constant $K>0$.   
Set $B_\tau(t,r)=\{s\in T\!: \tau(s,t)\leq r\}$ for $t\in T$, $r>0$,  $D=\sup\{\tau(s,t)\!: s,t\in T\}$ and 
\begin{align}
I_q(m,\tau;D)=\sup_{t\in T}\int_0^D \Big(\log \frac{1}{m(B_\tau(t,r))}\Big)^{1/q} dr.
\end{align}
In the following we will show that $m$ is a so-called majorizing measure, which means that  
$I_q(m,\tau,D)<\infty$. 
For $r< (\log (n-k+1))^{-1/q}$ we have $B_\tau((i,n),r)=\{(i,n)\}$. Therefore, $m( B_\tau((i,n),r))=Kn^{-3}$ and 
\begin{align}\label{slfj-2}
 \int_0^{(\log(n-k+1))^{-1/q}}  \Big(\log \frac{1}{m(B_\tau((i,n),r))}\Big)^{1/q} dr
 = \int_0^{(\log(n-k+1))^{-1/q}} \Big(3\log n+\log K\Big)^{1/q}\,dr.
\end{align} 
For all $r\geq (\log(n-k+1))^{-1/q}$, $(k,k)\in B_\tau((i,n),r)$  and hence $m(B_\tau((i,n),r))\geq m(\{(k,k)\})=K (k+1)^{-3}$. Therefore, 
\begin{align}\label{slfj-3}
{}& \int_{(\log(n-k+1))^{-1/q}}^D  \Big(\log \frac{1}{m(B_\tau((i,n),r))}\Big)^{1/q} \,dr 
\\ {}& \qquad 
\leq
\int_{(\log(n-k+1))^{-1/q}}^D  \Big(3\log(k+1)+\log K\Big)^{1/q}\,dr.
\end{align}
By \eqref{slfj-2} and \eqref{slfj-3} it follows that 
$I_q(m,\tau,D)<\infty$. 
For  $(i,n)\neq (j,l)$ we have that 
\begin{align}\label{est-g-35}
\frac{| \zeta_{i,n}(s)-\zeta_{j,l}(s)|}{\tau\big((i,n),(j,l)\big)}\leq {}& 
n^k |g_{i,n}(s)| \1_{\{s\leq i/n-\epsilon\}}+l^k |g_{j,l}(s)| \1_{\{s\leq j/l-\epsilon\}}\leq K \psi(s).
\end{align}
Fix $t_0\in T$ and consider the following Lipschitz
type norm of $\zeta$, 
\begin{equation}
\|\zeta \|_\tau(s)= D^{-1}|\zeta_{t_0}(s)|+ \sup_{\substack{t_1,t_2\in T:\\ \, \tau(t_1,t_2)\neq 0}} \frac{|\zeta_{t_1}(s)-\zeta_{t_2}(s)|}{\tau(t_1,t_2)}.
\end{equation}
By \eqref{est-g-35} it follows  that 
$\| \zeta \|_{\tau}(s)\leq K  \psi(s) $
and hence   
\begin{equation}\label{eq-tau-norm}
\int_\R \|\zeta \|^\theta_\tau(s)\,ds\leq K\Big(2+\int_1^\infty |g^{(k)}(s)|^\theta\,ds\Big)<\infty.
\end{equation}
By \cite[Theorem~3.1, Eq.~(3.11)]{Marcus-Rosinski} together with $I_q(m,\tau,D)<\infty$ and \eqref{eq-tau-norm} we deduce  \eqref{we-want-52}, which completes the proof of \eqref{rest-1}.

\noindent
\textit{End of the proof:}
 Recall the decomposition $\Delta^n_{i,n}X= M_{i,n,\epsilon}+R_{i,n,\epsilon}$ in  \eqref{rep-X-42}. Eq.~\eqref{main-con}, \eqref{rest-1} and an application of Minkowski inequality yield that  
 \begin{equation}\label{eqyert}
n^{\alpha p} V(p;k)_n\stab Z\qquad \text{ on $\Omega_\epsilon$ as $n\to \infty$.}
 \end{equation}
 Since $\P(\Omega_\epsilon)\uparrow 1$ as $\epsilon\downarrow 0$, \eqref{eqyert} implies that 
 \begin{equation}
n^{\alpha p} V(p;k)_n\stab  Z. 
 \end{equation}
We have now completed the proof for a particular  choice 
of stopping times $(T_m)_{m\geq 1}$. However, the result remains valid for any choice of $\F$-stopping times, since the distribution 
of $Z$ is invariant with respect to reordering of stopping times. 
\end{proof}

\noindent
{\em Step~(ii): An approximation.}
 To prove Theorem~\ref{maintheorem}(i) in the general case we need the following approximation result.
Consider a general symmetric L\'evy process $L=(L_t)_{t\in \R}$ as in Theorem~\ref{maintheorem}(i)
and let $N$ be the corresponding Poisson random measure $N(A):= \sharp\{t: (t,\Delta L_t)\in A\}$ for all measurable  $A\subseteq \R\times (\R\setminus\{0\})$.
%
%
By our assumptions (in particular, by  symmetry),  the  process $X(j)$ given by 
\begin{equation}\label{def-X-jaj}
X_t(j)=\int_{(-\infty,t]\times [-\frac{1}{j},\frac{1}{j}]} \big\{(g(t-s)-g_0(-s))x \big\}\,N(ds,dx)
\end{equation}
is well-defined. The following estimate on the processes $X(j)$ will be  crucial.

\begin{lem}\label{lem-1}
Suppose that   $\alpha<k-1/p$  and  $\beta<p$. 
Then  
\begin{equation*}
\lim_{j\to \infty} \limsup_{n\to \infty}\P\big(n^{\alpha p}V(X(j))_n>\epsilon\big)=0\qquad \text{for all }\epsilon>0.
\end{equation*}
\end{lem}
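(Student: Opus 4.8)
The goal is to show that the power variation built from the "small-jump part" $X(j)$ of $L$ is asymptotically negligible after scaling by $n^{\alpha p}$, uniformly as we first let $n\to\infty$ and then $j\to\infty$. Since $p$ may be larger than $\beta$ and in particular $\ge 1$, I would not try to work with $L^p$-moments directly; instead, the plan is to split each increment $\Delta^n_{i,k}X(j)$ via the same decomposition as in Step~(i), namely $\Delta^n_{i,k}X(j) = M_{i,n,\epsilon}(j) + R_{i,n,\epsilon}(j)$, where $M_{i,n,\epsilon}(j)$ integrates $g_{i,n}$ over $(i/n-\epsilon, i/n]$ against the truncated jump measure and $R_{i,n,\epsilon}(j)$ integrates over $(-\infty, i/n-\epsilon]$. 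The rest term $R_{i,n,\epsilon}(j)$ is handled exactly as in Step~(i) of the proof of Theorem~\ref{maintheorem}(i) — the majorizing-measure / $\omega$-by-$\omega$ estimates there only use the kernel bounds from Lemma~\ref{helplem} and assumption (A), and restricting the jumps to $[-1/j,1/j]$ only helps — so one gets $n^{\alpha p}\sum_i |R_{i,n,\epsilon}(j)|^p \toop 0$ for each fixed $j$ (in fact uniformly in $j$). Hence it suffices to control the dominating part $n^{\alpha p}\sum_{i=k}^n |M_{i,n,\epsilon}(j)|^p$.

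For the dominating part, the key point is that the small jumps of $L$ carry little $p$-mass when $p>\beta$. I would estimate $|M_{i,n,\epsilon}(j)|^p$ using Lemma~\ref{helplem}: if a jump of size $x$ with $|x|\le 1/j$ occurs at time $T\in((i_m-1)/n,i_m/n]$, then it contributes to $M_{i_m+l,n,\epsilon}(j)$ a term bounded by $K|x|\,n^{-\alpha}|h_k(l+U)|$-type quantities, and summing $|h_k(l+\cdot)|^p$ over $l\ge 0$ converges because $\alpha<k-1/p$. More precisely, using the representation analogous to \eqref{cru-decomp},
\[
n^{\alpha p}\sum_{i=k}^n |M_{i,n,\epsilon}(j)|^p \le K \sum_{m:\, T_m\in(0,1],\, |\Delta L_{T_m}|\le 1/j} |\Delta L_{T_m}|^p \sum_{l\ge 0} |h_k(l+\{nT_m\})|^p \cdot (\text{bounded factor}),
\]
and since $\sup_{u\in[0,1]}\sum_{l\ge 0}|h_k(l+u)|^p =: c_{k,p}<\infty$, the right-hand side is bounded by $K c_{k,p}\sum_{m: T_m\in(0,1],\,|\Delta L_{T_m}|\le 1/j}|\Delta L_{T_m}|^p$. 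Taking expectations and using the compensation formula, the latter has expectation $K c_{k,p}\int_{|x|\le 1/j}|x|^p\,\nu(dx)$, which is finite because $p>\beta$, and tends to $0$ as $j\to\infty$. This gives $\limsup_n \P(n^{\alpha p}\sum_i|M_{i,n,\epsilon}(j)|^p>\epsilon)\le K c_{k,p}\epsilon^{-1}\int_{|x|\le 1/j}|x|^p\,\nu(dx) \to 0$ as $j\to\infty$, on $\Omega_\epsilon$; since $\P(\Omega_\epsilon)\uparrow 1$ one removes the restriction to $\Omega_\epsilon$ at the end.

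Combining the two parts by Minkowski's (or the elementary $|a+b|^p \le 2^p(|a|^p+|b|^p)$) inequality gives the claim. \textbf{The main obstacle} is making the passage from the heuristic "a jump of size $x$ contributes $\approx c_0 x$ times a discrete filter of $(\cdot)_+^\alpha$" into a clean deterministic bound valid uniformly over $n$ and over the positions of all small jumps simultaneously — i.e., correctly accounting for the fact that $M_{i,n,\epsilon}(j)$ aggregates contributions of possibly many small jumps in the window of length $\epsilon$, so one needs an $\ell^p$-type superposition bound. Here, because the summands $\sum_l|h_k(l+u)|^p$ are uniformly bounded in $u$, and because $p\mapsto$ the contribution is handled most easily through the jump-by-jump bound followed by $p$-th power superadditivity concerns, the cleanest route is to bound $|M_{i,n,\epsilon}(j)|$ pointwise by $\int_{(i/n-\epsilon,i/n]\times[-1/j,1/j]}|g_{i,n}(s)x|\,N(ds,dx)$, raise to the $p$-th power using subadditivity only when $p\le 1$ and Jensen/Hölder on the (a.s.\ finite, on $\Omega_\epsilon$) number of jumps in the window when $p>1$, and then sum over $i$. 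This bookkeeping, together with verifying that the constant $c_{k,p}$ is finite (which is exactly where $\alpha<k-1/p$ enters, via the Taylor estimate $|h_k(x)|\le K|x|^{\alpha-k}$ for large $x$), constitutes the real work.
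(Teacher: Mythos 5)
There is a genuine gap, and it sits exactly where you locate ``the real work''. Your plan transplants the compound Poisson machinery of Step~(i) to $X(j)$, but $X(j)$ is the small-jump part of a general L\'evy process: whenever $\nu(\R)=\infty$ (the only case in which Lemma~\ref{lem-1} is actually needed, since otherwise $L$ is compound Poisson and Step~(i) already applies), $X(j)$ has almost surely infinitely many jumps in every interval. Hence the analogue of $\Omega_\epsilon$ has probability zero, the ``a.s.\ finite number of jumps in the window'' on which you want to run Jensen/H\"older for $p>1$ is a.s.\ infinite, and the representation of $M_{i,n,\epsilon}(j)$ as a finite sum of per-jump contributions $g_{i,n}(T_m)\Delta L_{T_m}$ (your analogue of \eqref{cru-decomp}) does not exist. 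The fallback pointwise bound $|M_{i,n,\epsilon}(j)|\le\int|g_{i,n}(s)x|\,N(ds,dx)$ is also unusable once $\beta\ge 1$: the truncated integral is only a compensated ($L^2$-limit) integral, and the right-hand side is a.s.\ infinite because the small jumps then have infinite variation; note that $\beta\ge1$ forces $p>1$, so this is precisely the regime where you would also need the false $\ell^1\!\to\!\ell^p$ superposition inequality behind your central display. The same infinite-variation issue invalidates the claim that the rest-term argument of Step~(i) ``only gets better'' under truncation: in the case $\theta\in(0,1]$ that argument rests on the $\omega$-wise absolute integrability \eqref{lshlfsjl}, which uses finiteness of $\nu$ and fails for $\nu$ restricted to $[-1/j,1/j]$ when $\beta\ge 1$. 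The correct ingredients you do identify (finiteness of $\sup_u\sum_{l}|h_k(l+u)|^p$ via $\alpha<k-1/p$, and $\int_{|x|\le 1/j}|x|^p\,\nu(dx)\to0$ via $p>\beta$) cannot be connected to $V(X(j))_n$ by the route you propose.

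The paper's proof is shorter and avoids all of this, and it shows that your stated reason for shunning moments is the wrong instinct: because the jumps of $X(j)$ are truncated at $1/j$, its increments \emph{do} have finite $p$-th moments. Markov's inequality together with stationarity of the increments reduces the lemma to showing $\lim_j\limsup_n n^{\alpha p+1}\E[|\Delta^n_{k,k}X(j)|^p]=0$; this moment is controlled by the Rajput--Rosi\'nski estimate \cite[Theorem~3.3]{RajRos}, which bounds it by $\xi_{n,j}=\int_{|x|\le 1/j}\chi_n(x)\,\nu(dx)$ with $\chi_n$ the truncated $p/2$-moment functional of \eqref{def-h}. The real work is then the uniform-in-$n$ bound $\chi_n(x)\le K(|x|^p+x^2)$ for $|x|\le1$, proved by splitting the $ds$-integral into $[-k/n,k/n]$, $[-1,-k/n]$ and $(-\infty,-1)$ and using the kernel estimates of Lemma~\ref{helplem} (this is where $\alpha<k-1/p$ and the integrability/monotonicity of $g^{(k)}$ enter); since $p>\beta$, $\int_{|x|\le1/j}(|x|^p+x^2)\,\nu(dx)\to0$ finishes the proof. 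If you prefer a self-contained argument, the viable substitute for your superposition step is a moment inequality for the compensated Poisson integral (Rajput--Rosi\'nski or a BDG-type bound), applied to the whole increment rather than jump by jump.
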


\begin{proof}
 By Markov's inequality and the stationary increments of $X(j)$ we have that 
\begin{align}
\P\big(n^{\alpha p}V(X(j))_n>\epsilon\big)
 \leq \epsilon^{-1}n^{\alpha p}  \sum_{i=k}^n \E [|\Delta^n_{i,k} X(j)|^p]\leq  \epsilon^{-1}n^{\alpha p +1} \E[|\Delta_{k,k}^n X(j)|^p].
\end{align}
Hence, it is enough to show that 
\begin{equation}\label{con-Y}
\lim_{j\to \infty} \limsup_{n\to \infty}  \E[|Y_{n,j} |^p] = 0\qquad \text{with}\quad Y_{n,j}:=n^{\alpha +1/p} \Delta^n_{k,k} X(j).
\end{equation}
To show \eqref{con-Y} it suffices  to prove that
\begin{align}
\label{def-h-2}
{}& \lim_{j\to \infty} \limsup_{n\to \infty} \xi_{n,j}= 0\qquad \text{where}\qquad   \xi_{n,j}=   \int_{|x|\leq 1/j} \chi_n(x)\,\nu(dx)\quad \text{and} \\ \label{def-h}
{}& \chi_n(x)=\int_{-\infty}^{k/n} \Big(|n^{\alpha+1/p} g_{k,n}(s)x|^p\1_{\{|n^{\alpha+1/p} g_{k,n}(s)x |\geq 1\}}
\\ {}& \phantom{\chi_n(x)=\int_{-\infty}^{k/n} \Big(  }
+|n^{\alpha+1/p} g_{k,n}(s)x|^2\1_{\{|n^{\alpha+1/p} g_{k,n}(s)x |\leq 1\}}\Big)\,ds,
\end{align}
which follows from the  representation  
\begin{equation*}
Y_{n,j}=\int_{(-\infty,k/n]\times  [-\frac{1}{j},\frac{1}{j}]}\big(n^{\alpha+1/p} g_{k,n}(s)x\big)\, N(ds,dx),
\end{equation*}
and   by   \cite[Theorem~3.3 and the remarks  above it]{RajRos}. 
Suppose for the moment that there exists a finite constant $K>0$ such that 
\begin{equation}\label{est-h-ewrlj}
\chi_n(x)\leq  K (|x|^p+x^2) \qquad \text{for all } x\in [-1,1].
\end{equation}
Then, 
\begin{equation}\label{last-est-sldfj}
\limsup_{j\to\infty}\big\{\limsup_{n\to \infty} \xi_{n,j}\big\}\leq K \limsup_{j\to\infty}\int_{|x|\leq 1/j} (|x|^p+x^2)\,\nu(dx)=0
\end{equation}
since $p>\beta$. Hence it suffices to show the estimate \eqref{est-h-ewrlj}, which we will do in the following.

Let $\Phi_p\!:\R\to\R_+$ denote the function  $\Phi_p(y)= |y|^2\1_{\{|y|\leq 1\}} +|y|^p\1_{\{|y|> 1\}}$.
We split $\chi_n$ into the following three terms which need different treatments
 \begin{align}
  \chi_n(x) {}& = \int_{-k/n}^{k/n} \Phi_p\Big( n^{\alpha+1/p} g_{k,n}(s)x\Big)\,ds +\int_{-1}^{-k/n} \Phi_p\Big(n^{\alpha+1/p} g_{k,n}(s)x\Big)\,ds \\{}
 {}&\phantom{=}+\int_{-\infty}^{-1} \Phi_p\Big(n^{\alpha+1/p} g_{k,n}(s) x\Big)\,ds \\ {}
{}& =:   I_{1,n}(x)+I_{2,n}(x)+I_{3,n}(x).
 \end{align}
\emph{Estimation of $I_{1,n}$}: By \eqref{lemest1} of Lemma \ref{helplem} we have that 
\begin{equation}\label{rep-g-slfj}
|g_{k,n}(s)|\leq K (k/n-s)^\alpha, \qquad s\in [-k/n,k/n].
\end{equation}  Since  $\Phi_p$ is increasing on $\R_+$, 
 \eqref{rep-g-slfj} implies that  
\begin{align}\label{est-v_n-1}
I_{1,n}(x)\leq K \int_{0}^{2k/n}\Phi_p\Big(x n^{\alpha+1/p}s^\alpha\Big)\,ds.
\end{align} 
By basic calculus it follows that 
\begin{align}
{}& \int_{0}^{2k/n} |xn^{\alpha+1/p}s^\alpha |^2\1_{\{|xn^{\alpha+1/p}s^\alpha|\leq 1\}}\,ds \nonumber
\\  
{}& \quad\leq K \Big( \1_{\{|x|\leq (2k )^{-\alpha}n^{-1/p}\}} x^2 n^{2/p-1} + \1_{\{|x|>(2k )^{-\alpha}n^{-1/p}\}} |x|^{-1/\alpha} n^{-1-1/(\alpha p)}\Big)
\\ {}& \quad\leq K (|x|^p +x^2). \label{lhsflh}
\end{align}
Moreover, 
\begin{align}\label{est-h-v2}
{}&  \int_{0}^{2k/n} |x  n^{\alpha+1/p} s^\alpha|^p \1_{\{|x  n^{\alpha+1/p} s^\alpha|>1\}}\,ds\leq \int_{0}^{2k/n} |x  n^{\alpha+1/p} s^\alpha|^p 
  \,ds 
  \leq K |x|^p. 
\end{align}
Combining \eqref{est-v_n-1}, \eqref{lhsflh} and \eqref{est-h-v2}  show the estimiate 
$I_{1,n}(x)\leq K(|x|^p+x^2)$.
 
 \noindent
\emph{Estimation of $I_{2,n}$}:
By \eqref{lemest2} of Lemma \ref{helplem} it holds  that 
\begin{align}\label{est-ljsdflj}
|g_{k,n}(s)|\leq K n^{-k} |s|^{\alpha-k}, \qquad s\in (-1,-k/n). 
\end{align}
Again, due to the fact that $\Phi_p$ is increasing on $\R_+$, \eqref{est-ljsdflj} implies that 
\begin{align}\label{kjhsfg-231}
 I_{2,n}(x) \leq K \int_{k/n}^{1} \Phi_p(x n^{\alpha+1/p-k}
s^{\alpha-k})\,ds.
\end{align}
For  $\alpha\neq k-1/2$ we have  
\begin{align}
 {}& \int_{k/n}^{1} |xn^{\alpha+1/p-k} s^{\alpha-k}|^2\1_{\{|xn^{\alpha+1/p-k}s^{\alpha-k}|\leq 1\}}\,ds \nonumber
\\ {}&\quad \leq 
 K \Big(x^2 n^{2(\alpha+1/p-k)} 
+\1_{\{|x|\leq n^{-1/p}k^{-(\alpha-k)}\}} |x|^2 n^{2/p-1} 
 \\ {}& \qquad \qquad + 
 \1_{\{|x|> n^{-1/p}k^{-(\alpha-k)}\}}
|x|^{1/(k-\alpha)}n^{1/(p(k-\alpha))-1}\Big)
 \\ \label{lsdjflsh-2}{}& \quad \leq K \Big(x^2+|x|^p\Big),
\end{align}
where we have used that  $\alpha< k-1/p$. 
 For $\alpha=k-1/2$ we have 
\begin{align}
{}& \int_{k/n}^{1} |xn^{\alpha+1/p-k} s^{\alpha-k}|^2\1_{\{|xn^{\alpha+1/p-k}s^{\alpha-k}|\leq 1\}}\,ds \nonumber
\\ \label{lsdjflsh-4}{}&\qquad \leq 
x^2 n^{2(\alpha+1/p-k)} \int_{k/n}^1 s^{-1} \,ds=x^2 n^{2(\alpha+1/p-k)}\log(n/k)\leq  K x^2,
\end{align}
where we again have used   $\alpha<k-1/p$ in the last inequality. 
Moreover, 
\begin{align}\label{lsdjflsh-5}
{}& \int_{k/n}^{1} |xn^{\alpha+1/p-k} s^{\alpha-k}|^p\1_{\{|xn^{\alpha+1/p-k}s^{\alpha-k}|> 1\}}\,ds 
\\ {}& \qquad \leq 
K |x|^p n^{p(\alpha+1/p-k)} \Big(1+(1/n)^{p(\alpha-k)+1}\Big)\leq K |x|^p.  \label{lsdjflsh-6}
\end{align}
By \eqref{kjhsfg-231}, \eqref{lsdjflsh-2}, \eqref{lsdjflsh-4} and \eqref{lsdjflsh-6} we obtain the estimate $I_{2,n}(x)\leq K(|x|^p+x^2)$. 

\noindent
\emph{Estimation of $I_{3,n}$}:
For $s<-1$ we have that 
$
|g_{k,n}(s)|\leq K n^{-k} |g^{(k)}(-k/n-s)|,
$
by \eqref{lemest3} of Lemma \ref{helplem},
and hence 
\begin{align}\label{klhshlkslh}
{}&I_{3,n}(x)
\leq  K \int_{1}^{\infty} 
\Phi_p\big(n^{\alpha+1/p-k} g^{(k)}(s)\big)\,ds.
\end{align}
We have that 
\begin{equation}\label{iuyhswrhjl}
 \int_{1}^{\infty} |xn^{\alpha+1/p-k} g^{(k)}(s)|^2\1_{\{|xn^{\alpha+1/p-k}g^{(k)}(s)|\leq 1\}}\,ds
 \leq 
 x^2 n^{2(\alpha+1/p-k)} \int_{1}^\infty |g^{(k)}(s)|^2\,ds.
\end{equation}
Since $|g^{(k)}|$ is decreasing on $(1,\infty)$ and $g^{(k)}\in L^\theta((1,\infty))$ for some $\theta\leq 2$, the integral on the right-hand side of \eqref{iuyhswrhjl} is finite. For $x\in [-1,1]$ we have  
\begin{align}
{}& \int_{1}^{\infty} |xn^{\alpha+1/p-k} g^{(k)}(s)|^p\1_{\{|xn^{\alpha+1/p-k}g^{(k)}(s)|> 1\}}\,ds \nonumber
 \\ {}& \qquad \leq 
 |x|^p n^{p(\alpha+1/p-k)} \int_{1}^\infty |g^{(k)}(s)|^p\1_{\{|g^{(k)}(s)|> 1\}}\,ds. \label{skdhhfho}
\end{align}
From  our assumptions it follows that the integral in \eqref{skdhhfho} is finite. By \eqref{klhshlkslh}, \eqref{iuyhswrhjl} and \eqref{skdhhfho} 
we have  that 
$I_{3,n}(x)\leq K (|x|^p+x^2)$ for all $x\in [-1,1]$,
which completes the proof of \eqref{est-h-ewrlj} and therefore also the proof of the lemma.  
\end{proof}

\noindent
{\em Step~(iii): The general case.}
In the following we will prove Theorem~\ref{maintheorem}(i) in the general case  by combining the above Steps~(i) and (ii). 

 \begin{proof}[Proof of Theorem~\ref{maintheorem}(i)]
Let $(T_m)_{m\geq 1}$ be a sequence of $\F$-stopping times that exhausts the jumps of $(L_t)_{t\geq 0}$. For each $j\in \N$ let $\hat L(j)$ be the L\'evy process given by 
\begin{equation}
\hat L_t(j)-\hat L_u(j)=\sum_{u\in (s,t]} \Delta L_u\1_{\{|\Delta L_u|>\frac{1}{j}\}},\qquad s<t,
\end{equation}
and set 
\begin{equation}
\hat X_t(j)= \int_{-\infty}^t \big(g(t-s)-g_0(-s)\big)\,d\hat L_s(j). 
\end{equation}
Moreover, set 
\begin{equation}
T_{m,j}=\begin{cases}
T_{m} & \text{if }|\Delta L_{T_m}|>\frac{1}{j} \\ \infty & \text{else}, 
\end{cases}
\end{equation}
and note that  $(T_{m,j})_{m\geq 1}$ is a sequence of $\F$-stopping times that exhausts the jumps of  $(\hat L_t(j))_{t\geq 0}$.
Since  $\hat L(j)$ is a compound Poisson process,  Step~(i) shows that 
\begin{align}\label{app-12}
n^{\alpha p}V(\hat X(j))_n \stab Z_j:=\sum_{m:\,T_{m,j}\in [0,1]} |\Delta \hat L_{T_{m,j}}(j)|^p V_{m} \qquad \text{as } n\rightarrow \infty,
\end{align}
where $V_m$, $m\geq 1$, are defined in \eqref{part1}. 
By definition of $T_{m,j}$ and monotone convergence we have as $j\to \infty$, 
\begin{equation}\label{app-22}
Z_j=\sum_{m:\,T_{m}\in [0,1]} |\Delta L_{T_{m}}|^p V_{m} \1_{\{|\Delta L_{T_m}|>\frac{1}{j}|\}} \toas \sum_{m:\,T_{m}\in [0,1]} |\Delta L_{T_{m}}|^p V_m=:Z.
\end{equation}
Suppose first that $p\geq 1$ and decompose
 \begin{align}
\big(n^{\alpha p} V(X)_n\big)^{1/p}{}& = \big(n^{\alpha p}V(\hat X(j))_n\big)^{1/p} +\Big(\big(n^{\alpha p}V(X)_n\big)^{1/p}-\big(n^{\alpha p}V(\hat X(j))_n\big)^{1/p}\Big)
\\ {}& =: Y_{n,j}+U_{n,j}.\label{eq:746}
\end{align}
Eq.~\eqref{app-12} and \eqref{app-22} show 
\begin{equation}\label{eq:3456}
Y_{n,j}\xrightarrow[n\to \infty]{\mathcal{L}-s} Z_j^{1/p}\qquad \text{and}\qquad Z_j^{1/p}\xrightarrow[j\to \infty]{\P} Z^{1/p}. 
\end{equation}
Note that $X-\hat X(j)=X(j)$, where $X(j)$ is defined in \eqref{def-X-jaj}. For all $\epsilon>0$ we have by Minkowski's inequality  
\begin{align}\label{eq:63653}
{}& \limsup_{j \to \infty}\limsup_{n\to \infty} \P\big( |U_{n,j}|>\epsilon\big)  \leq 
\limsup_{j \to \infty}\limsup_{n\to \infty} \P\big(n^{\alpha p}V(X(j))_n>\epsilon^p\big)=0,
\end{align}
where the last equality follows by Lemma~\ref{lem-1}. By a standard argument, see e.g.\ \cite[Theorem~3.2]{Billingsley}, \eqref{eq:3456} and \eqref{eq:63653} implies that  
$(n^{\alpha p} V(X)_n)^{1/p}\stab Z^{1/p}$ which completes the proof of Theorem~\ref{maintheorem}(i) when $p\geq 1$. For  $p<1$, Theorem~\ref{maintheorem}(i) follows by \eqref{app-12}, \eqref{app-22}, the inequality  
$| V(X)_n- V(\hat X(j))_n |\leq V(X(j))_n$  and \cite[Theorem~3.2]{Billingsley}. 
\end{proof}

\subsection{Proof of Theorem~\ref{maintheorem}(ii)}\label{main-thm-2}

Suppose that $\alpha <k-1/\beta$, $p<\beta$ and $L$ is a symmetric $\beta$-stable L\'evy proces. In the proof of Theorem~\ref{maintheorem}(ii) we will use  the following notation: For all $n\geq 1$, $r\geq 0$  set
\begin{equation}\label{eq:23412343}
\phi_{r}^n(s)= D^kg_n(r-s),\qquad \phi_r^\infty(u)=h_k(r-u),
\end{equation}
where 
$g_n$ and $D^k$ are defined at \eqref{def-g-n} and \eqref{dkdef}, and the function $h_k$ is defined in \eqref{def-h-13}.
For all $n\in \N\cup\{\infty\}$ and $t\geq 0$ set 
\begin{equation}\label{eq:232452}
  Y_{t}^n=  \int_{-\infty}^t \phi_{t}^n(s)\,dL_s.
\end{equation}
By self-similarity of $L$ of index $1/\beta$ we have for all $n\in \N$, 
\begin{equation}\label{self-similar}
\{n^{\alpha+1/\beta} \Delta^n_{i,k} X\!:i=k,\dots,n\}\eqschw \{Y_{i}^n\!:i=k,\dots,n\},
\end{equation}
  where  $\eqschw$ means equality in distribution. 
For $\alpha<1-1/\beta$,  $Y^\infty$ is the $k$-order increments of a linear fractional stable motion. For $\alpha\geq 1-1/\beta$ the linear fractional stable motion is not well-defined, but $Y^\infty$ is well-defined since the function $h_k$ is locally bounded and satisfies  $|h_k(x)|\leq K x^{\alpha-k}$ for all $x\geq k+1$,  
which implies that $h_k\in L^\beta(\R)$. We are now ready to prove Theorem~\ref{maintheorem}(ii), which is done by approximate  $Y^n_t$ by $Y^\infty_t$ and applying  the   ergodic properties of $Y^\infty_t$. 

%

To show that $Y_{k}^n\to Y^\infty_k$  in $L^p$ as $n\to \infty$ we  recall  that for $\phi:s\mapsto s^\alpha_+$ we have $D^k \phi=h_k \in L^\beta(\R)$.
For $s\in \R$ let  
$\psi_n(s)=g_n(s)-s_+^{\alpha}$. Since $p<\beta$,   
\begin{align}\label{est-lkhgwkg}
\E[|Y_{k}^n-Y_k^\infty|^p]=K \Big(\int_0^\infty | D^k \psi_n(s)  |^\beta \,ds\Big)^{p/\beta}.
\end{align}
To show that the right-hand side of \eqref{est-lkhgwkg} converge to zero we   note that 
\begin{align}\label{sdkkls}
{}& \int_{n+k}^\infty |D^k g_n(s)|^\beta\,ds \leq K n^{\beta(\alpha-k)} 
\int_{n+k}^\infty |g^{(k)}\big((s-k)/n\big)|^\beta\,ds
\\ \label{sdkkls-1} {}& \qquad = 
K n^{\beta(\alpha-k)+1} \int_{1 }^\infty |g^{(k)}(s)|^\beta\,ds\to 0\qquad \text{as }n\to \infty,
\end{align}
which implies that 
\begin{align}\label{skdhks}
 \int_{n+k}^\infty |D^k \psi_n(s)|^\beta\,ds\leq {}& K\Big( \int_{n+k}^\infty |D^k g_n(s)|^\beta\,ds+\int_{n+k}^\infty |D^k \phi(s)|^\beta\,ds\Big)\xrightarrow[n\to\infty]{} 0.
\end{align}
By \eqref{lemest2} of Lemma \ref{helplem} it holds that 
\begin{equation}
|D^k g_n(s)|\leq  K(s-k)^{\alpha-k}
\end{equation}
for  $s\in (k+1,n)$.
Therefore,   for $s\in (0,n]$ we have 
\begin{equation}\label{dom-slfj}
|D^k\psi_n(s)|\leq K \big(\1_{\{s\leq k+1\}}+\1_{\{s>k+1\}}(s-k)^{\alpha-k}\big),  
\end{equation}
where  the function on the right-hand side of \eqref{dom-slfj} is in $L^{\beta}(\R_+)$. 
For fixed $s\geq 0$, $\psi_n(s)\to 0 $ as $n\to \infty$ by  assumption \eqref{kshs}, and hence $D^k \psi_n(s)\to 0$ as $n\to\infty$. By \eqref{dom-slfj} and the dominated convergence theorem this shows that 
\begin{equation}\label{con-low}
\int_0^{ n} |D^k \psi_n(s)|^\beta\,ds\to 0. 
\end{equation}
By \eqref{est-lkhgwkg}, \eqref{skdhks} and \eqref{con-low}   we have 
\begin{equation}\label{eq:12380273}
 \E[|Y_{k}^n-Y^\infty_k|^p]\to 0 \qquad \text{as } n\to \infty,
\end{equation}
 which implies that 
  \begin{equation}\label{con-L-1}
\E\Big[ \frac{1}{n} \sum_{i=k}^n |Y_{i}^n-Y^\infty_i|^p\Big]=  \frac{1}{n} \sum_{i=k}^n \E[|Y_{i}^n-Y_i^\infty |^p]\leq \E[|Y_{k}^n-Y_k^\infty|^p]\to 0 
 \end{equation} 
 as $n\to \infty$. 
Moreover,   $(Y_t^\infty)_{t\in \R}$ is mixing since it is a symmetric stable moving average, see e.g.\ \cite{Ergodic-Cam}. This implies, in particular,  that the discrete time stationary sequence $\{Y_j\}_{j\in \mathbbm Z}$ is mixing and hence ergodic. According to Birkhoff's ergodic theorem (cf.\  \cite[Theorem~10.6]{k83}) 
\begin{equation}\label{Ergodic}
\frac{1}{n} \sum_{i=k}^n |Y_i^\infty|^p \toas \E[|V_k|^p]=:m_p \in (0,\infty)\qquad \text{as }n\to \infty.
\end{equation}
We note that $m_p$ defined at \eqref{Ergodic} coincide with the definition 
 in  Theorem \ref{maintheorem}(ii), cf.\  \cite[Property~1.2.17 and 3.2.2]{SamTaq}.  
By  \eqref{con-L-1}, Minkowski's inequality and \eqref{Ergodic}, we deduce 
\begin{equation}\label{con-gsdfhk}
\frac{1}{n}\sum_{i=k}^n |Y_{i}^n|^p\toop m_p \qquad \text{as }n\to\infty.
\end{equation}
By  \eqref{self-similar} it shows that 
\begin{align}
 n^{-1+p(\alpha+1/\beta)} V(X)_n={}& \frac{1}{n} \sum_{i=k}^n |n^{\alpha+1/\beta} \Delta^n_{i,k} X|^p   \eqschw \frac{1}{n}\sum_{i=k}^n |Y^n_i|^p\toop m_p
\end{align}
as $n\to \infty$.  This completes the proof of Theorem~\ref{maintheorem}(ii).  \qed

\subsection{Proof of Theorem~\ref{maintheorem}(iii)}

We will derive Theorem~\ref{maintheorem}(iii) from   the  two lemmas below. For $k\in \N$ and $p\in [1,\infty)$ let $W^{k,p}$ denote the Wiener space of functions $\zeta\!:[0,1]\to\R$ which are $k$-times differentiable with  $\zeta^{(k)}\in L^p([0,1])$ where $\zeta^{(k)}(t)=\partial^k \zeta(t)/\partial t^k$ $\lambda$-a.s. First we  will show that, under the conditions in Theorem~\ref{maintheorem}(iii),    $X\in W^{k,p}$ almost surely.

\begin{lem}\label{abs-cont-sdf}
Suppose  that $p\neq \theta$, $p\geq 1$ and (A) holds.
 If $\alpha>k-1/(p\vee \beta)$ then  
 \begin{equation}\label{sdhjlshjl}
X\in W^{k,p}\text{ a.s.}\qquad \text{and}\qquad \frac{\partial^{k}}{\partial t^k} X_t=\int_{-\infty}^t g^{(k)}(t-s)\,dL_s\qquad \lambda\otimes \P\text{-a.s.}
 \end{equation}
 Eq.~\eqref{sdhjlshjl} remains valid for  $p=\theta$  if, in  addition, (A-log) holds. 
\end{lem}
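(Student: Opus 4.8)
The plan is to reduce the lemma to two assertions: that $F_t:=\int_{-\infty}^t g^{(k)}(t-s)\,dL_s$ is well-defined for Lebesgue-a.e.\ $t$, and that $\int_0^1|F_u|^p\,du<\infty$ almost surely; the identity $\partial^kX/\partial t^k=F$ then follows from a Fubini-type argument. I would first record that $\alpha>k-1/(p\vee\beta)$ is exactly the conjunction of $\alpha>k-1/p$ and $\alpha>k-1/\beta$, and that if $\alpha\ge k$ then $g^{(k)}$ is bounded near $0$ and everything below is routine; so one may assume $\alpha<k$, so that $g^{(k)}$ genuinely explodes at the origin with $|g^{(k)}(v)|\le Kv^{\alpha-k}$ on $(0,\delta)$ by (A). To see that $F_t$ is well-defined I would verify the Rajput--Rosi\'nski criterion \cite{RajRos} for the kernel $s\mapsto g^{(k)}(t-s)$, exactly as was done for $X$ itself in Section~\ref{secPrel}: by \eqref{eq:74} the integral $\int_{-\infty}^t\int_\R(|g^{(k)}(t-s)x|^2\wedge1)\,\nu(dx)\,ds$ splits into a contribution from $s$ close to $t$, where $|g^{(k)}(t-s)|$ is large and \eqref{eq:74} bounds the inner integral by $K(t-s)^{(\alpha-k)(\beta+\epsilon)}$, integrable because $\alpha>k-1/\beta$ forces $(\alpha-k)(\beta+\epsilon)>-1$ for small $\epsilon$, and a contribution from the remaining range, controlled by $g^{(k)}\in L^\theta$ and its monotonicity at infinity as in (A). The borderline $p=\theta$ is where (A-log) enters, absorbing the $\log(1/|y|)$ factor in the $|y|\le1$ regime of \eqref{eq:74}. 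One also checks, as in \cite{RajRos}, that $F$ has a jointly measurable version, so $F_t=\int_{-\infty}^t g^{(k)}(t-s)\,dL_s$ a.s.\ for all $t$.

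For the path integrability I would decompose $L=L^{(b)}+L^{(s)}$ into jumps of absolute size $>1$ and $\le1$, with $F=F^{(b)}+F^{(s)}$. For $F^{(s)}$, using the standard fact that an infinitely divisible variable has finite $p$th moment iff its L\'evy measure integrates $|z|^p$ away from the origin, a direct estimate of the L\'evy measure of $F^{(s)}_u$, whose large values come from $s$ near $u$, together with $\int_{|x|\le1}|x|^p\nu(dx)<\infty$ when $p>\beta$ (respectively the behaviour of $\nu$ near $0$ when $p\le\beta$), gives $\sup_{u\in[0,1]}\E|F^{(s)}_u|^p<\infty$; this is precisely where $\alpha>k-1/p$ (resp.\ $\alpha>k-1/\beta$) is used, and hence $\int_0^1\E|F^{(s)}_u|^p\,du<\infty$. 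For $F^{(b)}$, which equals $\sum_{m:\,T_m\le u}g^{(k)}(u-T_m)\Delta L_{T_m}$ with a.s.\ only finitely many summands having $T_m$ near $[0,1]$ and a summable remote tail by the decay and $L^\theta$-integrability of $g^{(k)}$ in (A), Minkowski's inequality ($p\ge1$) gives $\int_0^1|F^{(b)}_u|^p\,du\le K\sum_m|\Delta L_{T_m}|^p\int_0^1|g^{(k)}(u-T_m)|^p\,du<\infty$ a.s., the inner integral being finite since $\alpha>k-1/p$ makes the singularity at $u=T_m$ $p$th-power integrable. Combining, $\int_0^1|F_u|^p\,du<\infty$ a.s.; alternatively one may deduce this from the sample-path integrability results of \cite{SamBra}, which is where $p\ge1$ ultimately enters.

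To identify $F$ with $\partial^kX/\partial t^k$ it suffices to treat $k=1$ and iterate, the higher-order case being identical because (A) controls $g',\dots,g^{(k)}$ simultaneously. For $0\le t'<t$ one has $X_t-X_{t'}=\int_{-\infty}^\infty\big(g(t-s)-g(t'-s)\big)\,dL_s=\int_{-\infty}^\infty\big(\int_{t'}^t g'(u-s)\,du\big)\,dL_s$, using $g(0{+})=0$ from \eqref{kshs} and reading $g'$ as $0$ on the negative axis. The key step is to interchange the $du$- and $dL_s$-integrations: on the compound Poisson part $L^{(b)}$ this is an elementary identity for finite sums plus a dominated tail, while on $L^{(s)}$ one truncates the kernel to $g'(v)\1_{\{v>\varepsilon\}}$, applies the stochastic Fubini theorem for bounded kernels, and passes to the limit $\varepsilon\downarrow0$ using the uniform $L^p$ bounds (or, after truncating large jumps as well, $L^2$ bounds) from the previous paragraph. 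This yields $X_t=X_{t'}+\int_{t'}^t F^{(1)}_u\,du$ with $F^{(1)}_u=\int_{-\infty}^u g'(u-s)\,dL_s$, hence $X$ is absolutely continuous on $[0,1]$ with $X'=F^{(1)}\in L^p([0,1])$; iterating $k$ times gives \eqref{sdhjlshjl}.

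The main obstacle is the stochastic Fubini interchange: precisely in the range $\alpha\in(k-1/\beta,\,k-\tfrac12)$ the kernel $g^{(k)}(u-\cdot)$ is not square-integrable at its singularity, so $F_u$ need not possess a second moment and the usual $L^2$ stochastic Fubini theorem is unavailable, which is what forces the truncation-and-limit scheme above (and makes the $L^p$-space structure, hence $p\ge1$ and \cite{SamBra}, relevant). The remaining effort is careful bookkeeping at the two borderlines, namely $p=\theta$, covered by (A-log), and the separate roles of the thresholds $k-1/p$ and $k-1/\beta$ in the moment estimate of the second step.
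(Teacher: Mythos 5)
Your overall route is genuinely different from the paper's: the paper does not prove well-definedness, $L^p$-integrability of the paths and the derivative identity by hand, but instead verifies the three integral conditions (5.3), (5.4), (5.6) of \cite[Theorem~5.1]{SamBra}, which delivers $X\in W^{k,p}$ together with the representation of $\partial^k X/\partial t^k$ in one stroke (this is also exactly where $p\geq 1$ and, at $p=\theta$, (A-log) enter). Your self-contained plan could in principle work, but as written it has a genuine gap in the second step, the pathwise bound for the big-jump part. First, the displayed inequality $\int_0^1|F^{(b)}_u|^p\,du\le K\sum_m|\Delta L_{T_m}|^p\int_0^1|g^{(k)}(u-T_m)|^p\,du$ is not Minkowski's inequality and is false for $p>1$ (Minkowski gives $\|F^{(b)}\|_{L^p([0,1])}\le\sum_m|\Delta L_{T_m}|\,\|g^{(k)}(\cdot-T_m)\|_{L^p([0,1])}$, an $\ell^1$-sum of norms; the bound by the sum of $p$th powers is the $p\le 1$ subadditivity, i.e.\ the wrong direction). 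Second, even the corrected bound requires the remote series $\sum_{m:\,T_m<-1}|\Delta L_{T_m}|\,|g^{(k)}(u-T_m)|$ to converge absolutely, and this is simply not guaranteed under (A): the a.s.\ finiteness criterion for this positive Poisson functional is $\int_1^\infty\int_{\{|x|>1\}}\big(|x\,g^{(k)}(s)|\wedge 1\big)\,\nu(dx)\,ds<\infty$, which for $\theta\in(1,2]$ amounts to $g^{(k)}\in L^1((\delta,\infty))$ (and for $\theta=1$ needs a logarithmic correction), whereas (A) only provides $g^{(k)}\in L^\theta((\delta,\infty))$. This is precisely the difficulty the paper meets for its "rest term" in the proof of Theorem~1.1(i), where for $\theta\in(1,2]$ it has to invoke the majorizing-measure machinery of \cite{Marcus-Rosinski} rather than a termwise bound; your sketch has no substitute for this.

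The gap is repairable, but it changes the argument: one should split by time region rather than globally by jump size — handle the finitely many big jumps with $T_m$ near $[0,1]$ termwise (there the singularity is $p$-integrable since $\alpha>k-1/p$), and for the contribution from $s\le u-\delta$ keep small and big jumps together and show that this far-away integral has a finite $p$th moment: its L\'evy measure integrates $|z|^p$ on $\{|z|>1\}$ because $\int_{\{|x|>1/|y|\}}|yx|^p\,\nu(dx)\le K|y|^\theta$ (with an extra $\log(1/|y|)$ factor exactly when $p=\theta$), so finiteness reduces to $\int_\delta^\infty|g^{(k)}|^\theta<\infty$, respectively the (A-log) integral. Note that this is also where (A-log) actually enters; your sketch places it vaguely in the small-jump estimate \eqref{eq:74}, which contains no logarithm, and never writes the estimate where $p=\theta$ is critical. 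Two smaller points: the stochastic Fubini interchange you outline is the other place where all the technical work sits (the paper avoids it entirely by using \cite{SamBra}), and your reduction "treat $k=1$ and iterate, since (A) controls $g',\dots,g^{(k)}$" is not literally backed by (A), which constrains only $g'$ and $g^{(k)}$ away from the origin and only $g^{(k)}$ near it; an interpolation (Landau--Kolmogorov type) argument for the intermediate derivatives would be needed to make the iteration legitimate.
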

\begin{proof}
We will not need the assumption \eqref{kshs} on $g$ in the proof.  For notation simplicity  we only consider the case $k=1$, since the general case follows by similar arguments.   
To prove \eqref{sdhjlshjl} it is sufficient 
to show that the three conditions (5.3), (5.4) and (5.6) from \cite[Theorem~5.1]{SamBra} are satisfied
(this result uses the condition $p\geq 1$). In fact, the representation \eqref{sdhjlshjl} of $(\partial/\partial t)X_t$ 
follows by the equation below (5.10) in  \cite{SamBra}.
In our setting the  function $\dot \sigma$ defined in \cite[Eq.~(5.5)]{SamBra} is constant and hence  (5.3), (5.4) and (5.6) in \cite{SamBra}  simplifies to 
\begin{align}
{}& \label{sldfhogh} \int_{\R} \nu\Big(\Big(\frac{1}{\| g'\|_{L^p([s,1+s])}},\infty\Big)\Big)\,ds<\infty,\\ 
{}& \label{sdflhkhgs} \int_0^\infty \int_\R \Big(|x g'(s)|^2\wedge 1\Big)\,\nu(dx)\,ds<\infty, \\ 
\label{khshjl}
{}& \int_0^1 \int_\R |g'(t+s)|^p\Big(\int_{r/|g'(t+s)|}^{1/\|g'\| _{L^p([s,1+s])}} x^p \,\nu(dx)\Big)\,ds\,dt<\infty
\end{align}
for all $r>0$. When the lower bound in the inner integral in \eqref{khshjl} exceed the upper bound  the integral is set to zero.
Since $\alpha>1-1/\beta$ we may choose $\epsilon>0$ such that $(\alpha-1)(\beta+\epsilon)>-1$. 
 To show \eqref{sldfhogh} we use the estimates
\begin{align}
\| g'\|_{L^p([s,1+s])}\leq {}& K \Big(\1_{\{s\in [-1,1]\}}  +\1_{\{s>1\}} |g'(s)|\Big),\qquad s\in \R,  
\intertext{and}
\nu((u,\infty))\leq {}& 
\begin{cases} K u^{-\theta} & u\geq 1 \\ 
K u^{-\beta-\epsilon} & u\in (0,1],
\end{cases}
\end{align}
which both follow from assumption (A). Hence, we deduce that  
\begin{align}
{}& \int_{\R} \nu\Big(\Big(\frac{1}{\| g'\|_{L^p([s,1+s])}},\infty\Big)\Big)\,ds
\\ {}& \qquad \leq 
 \int_{-1}^1  \nu\Big(\Big(\frac{1}{K},\infty\Big)\Big)\,ds
 +  \int_{1}^{\infty}  \nu\Big(\Big(\frac{1}{K | g'(s)|},\infty\Big)\Big)\,ds
 \\ {}& \qquad \leq 
 2 \nu\Big(\Big(\frac{1}{K},\infty\Big)\Big)
 + K\int_{1}^{\infty}  \Big(|g'(s)|^\theta\1_{\{K |g'(s)|\leq 1\}}+|g'(s)|^{\beta+\epsilon}\1_{\{K|g'(s)|> 1\}}\Big)\,ds<\infty
\end{align}
which shows \eqref{sldfhogh} (recall that $|g'|$ is decreasing on $(1,\infty)$). To show  \eqref{sdflhkhgs} we  will use   the following two  estimates: 
\begin{align}\label{hgsdghl}
 {}&   \int_0^1 \Big(|s^{\alpha-1}x|^2\wedge 1\Big)\,ds
\leq 
\begin{cases} K\Big(\1_{\{|x|\leq 1\}} |x|^{1/(1-\alpha)}+  \1_{\{|x|> 1\}} \Big) & \alpha<1/2 \\ 
K\Big(\1_{\{|x|\leq 1\}} x^2\log(1/x)+  \1_{\{|x|>1\}} \Big)  & \alpha = 1/2 \\   K\Big(\1_{\{|x|\leq 1\}}x^2 +  \1_{\{|x|> 1\}} \Big) & \alpha>1/2,
\end{cases}
\intertext{and} 
\label{howeerho}
{}&  \int_{\{|x|>1\}}  \Big(|x g'(s)|^2\wedge 1\Big)\,\nu(dx) 
 \leq	  K \int_{1}^\infty  \Big(|x g'(s)|^2\wedge 1\Big) x^{-1-\theta}\,dx
 \leq K |g'(s)|^\theta.\qquad
\end{align}
For $\alpha<1/2$ we have 
\begin{align}
{}& \int_0^\infty \int_\R \Big(|x g'(s)|^2\wedge 1\Big)\,\nu(dx)\,ds\\
{}& \quad 
\leq K\Big\{\int_\R  \int_0^1 \Big(|x s^{\alpha-1}|^2\wedge 1\Big)\,ds\,\nu(dx)+
\int_1^\infty \int_{\{|x|\leq 1\}}   \Big(|x g'(s)|^2\wedge 1\Big)\,\nu(dx)\,ds  \\  {}&\qquad \qquad +\int_1^\infty   \int_{\{|x|> 1\}}  \Big(|x g'(s)|^2\wedge 1\Big)\,\nu(dx)\,ds\Big\}\\ {}&\quad 
 \leq K\Big\{ \int_\R  \big( \1_{\{|x|\leq 1\}} |x|^{1/(1-\alpha)}+\1_{\{|x|>1\}}\big)\, \nu(dx)\\ {}& \qquad \qquad +\Big(\int_1^\infty |g'(s)|^2\,ds\Big)\Big(\int_{\{|x|\leq 1\}} x^2\,\nu(dx) \Big) +\int_1^\infty |g'(s)|^\theta\,ds\Big\}<\infty, 
\end{align}
where  the first  inequality follows by  assumption (A), 
the second inequality follows by \eqref{hgsdghl} and \eqref{howeerho},  and   the last inequality is due to the fact that    $1/(1-\alpha)>\beta$ and $g'\in L^\theta((1,\infty))\cap L^2((1,\infty))$. 
This shows \eqref{sdflhkhgs}. The two remaining  cases $\alpha=1/2$ and $\alpha>1/2$ follow similarly.

Now, we will prove that \eqref{khshjl} holds.
Since $|g'|$ is decreasing on $(1,\infty)$ we have for all $t\in [0,1]$ 
that 
 \begin{align}\label{sdfkgkhsfh}
{}& \int_1^\infty |g'(t+s)|^p\Big(\int_{r/|g'(t+s)|}^{1/\|g'\| _{L^p([s,1+s])}} x^p \,\nu(dx)\Big)\,ds
\\ {}& \qquad  \leq 
 \int_1^\infty |g'(s)|^p\Big(\int_{r/|g'(1+s)|}^{1/|g'(s)|} x^p \,\nu(dx)\Big)\,ds
 \\ \label{khshf}{}& \qquad  \leq \frac{K}{p-\theta}
 \int_1^\infty |g'(s)|^p\Big(|g'(s)|^{\theta-p}-|g'(s+1)/r|^{\theta-p}\Big)\1_{\{r/|g'(1+s)|\leq 1/|g'(s)|\}}\,ds. \qquad 
  \end{align}
For $p>\theta$, \eqref{khshf} is less than or equal to 
  \begin{equation}
 \frac{K}{p-\theta} \int_1^\infty |g'(s)|^\theta\,ds<\infty,
  \end{equation}
 and for $p<\theta$,
 \eqref{khshf} is less than or equal to 
 \begin{align}
\frac{K r^{p-\theta}}{\theta-p}
 \int_1^\infty |g'(s)|^p |g'(s+1)|^{\theta-p}\,ds\leq 
 \frac{K r^{p-\theta}}{\theta-p} \int_1^\infty |g'(s)|^\theta\,ds<\infty,
 \end{align}
 where the first inequality is due to the fact that $|g'|$ is decreasing on $(1,\infty)$. 
 Hence we have shown that 
 \begin{equation}\label{khsk}
 \int_0^1 \int_1^\infty |g'(t+s)|^p\Big(\int_{r/|g'(t+s)|}^{1/\|g'\| _{L^p([s,1+s])}} x^p \,\nu(dx)\Big)\,ds\,dt<\infty
 \end{equation}
 for $p\neq \theta$. 
Suppose that $p>\beta$. For $t\in [0,1]$ and $s\in [-1,1]$ we have 
 \begin{align}\label{klhkl}
 {}& \int_{r/|g'(t+s)|}^{1/\|g'\| _{L^p([s,1+s])}} x^p \,\nu(dx)\leq  \int_{1}^{1/\|g'\| _{L^p([s,1+s])}} x^p \,\nu(dx)+ \int_{r/|g'(t+s)|}^{1} x^p \,\nu(dx)\qquad \\
 {}& \qquad \leq K\Big( \|g'\| _{L^p([s,1+s])}^{\theta-p}
+1 \Big)\label{klhkl-1}
 \end{align}
 and hence 
 \begin{align}\label{eq93e4u7}
  {}&  \int_0^1 \int_{-1}^1 |g'(t+s)|^p\Big(\int_{r/|g'(t+s)|}^{1/\|g'\| _{L^p([s,1+s])}} x^p \,\nu(dx)\Big)\,ds\,dt\\ \label{eq93e4u7-1}{}&\qquad  
   \leq K\Big(\int_{-1}^1 \|g'\|_{L^p([s,s+1])}^{\theta} \,ds+ \int_{-1}^1
    \|g'\|_{L^p([s,1+s])}^p\,ds\Big)<\infty.
 \end{align}
Suppose that  $p\leq \beta$. For  $t\in [0,1]$ and $s\in [-1,1]$ we have 
 \begin{align}
  \int_{r/|g'(t+s)|}^{1/\|g'\| _{L^p([s,1+s])}} x^p \,\nu(dx)\leq K\Big( \|g'\| _{L^p([s,1+s])}^{\theta-p}
+|g'(t+s)|^{\beta+\epsilon-p} \Big)
 \end{align}
 and hence 
 \begin{align}\label{skdfh}
{}&  \int_0^1 \int_{-1}^1 |g'(t+s)|^p\Big(\int_{r/|g'(t+s)|}^{1/\|g'\| _{L^p([s,1+s])}} x^p \,\nu(dx)\Big)\,ds\,dt\\ \label{skdfh-1}{}&\qquad  
   \leq K\Big(\int_{-1}^1 \|g'\|_{L^p([s,s+1])}^{\theta} \,ds+ \int_{-1}^1
    \|g'\|_{L^{\beta+\epsilon}([s,1+s])}^{\beta+\epsilon}\,ds\Big)<\infty
 \end{align}
 since $(\alpha-1)(\beta+\epsilon)>-1$. Thus,  \eqref{khshjl} follows by \eqref{khsk}, \eqref{eq93e4u7}--\eqref{eq93e4u7-1} and \eqref{skdfh}--\eqref{skdfh-1}.  
 
For  $p=\theta$ the above proof  remains valid  except for \eqref{khsk}, where we need the additional assumption (A-log).  This completes the proof. 
 \end{proof}

\begin{lem}\label{lem-f-65}
For all $\zeta\in W^{k,p}$ we have  as $n\to \infty$, 
\begin{equation}\label{con-g-1}
n^{-1+pk}V(\zeta,p;k)_n\to \int_0^1 |\zeta^{(k)}(s)|^p\,ds.
\end{equation}
\end{lem}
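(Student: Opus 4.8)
The plan is to turn the statement into a generalized Riemann-sum convergence by first representing the $k$th order increments exactly in terms of $\zeta^{(k)}$. Since $\zeta\in W^{k,p}$, the lower-order derivatives $\zeta,\zeta',\dots,\zeta^{(k-1)}$ are absolutely continuous on $[0,1]$ (recall $\zeta^{(k)}\in L^p([0,1])\subseteq L^1([0,1])$), so $k$ successive applications of the fundamental theorem of calculus give, for $i\geq k$,
\[
\Delta_{i,k}^{n}\zeta = \int_{[0,1/n]^k}\zeta^{(k)}\Big(\tfrac{i-k}{n}+t_1+\dots+t_k\Big)\,dt_1\cdots dt_k = n^{-k}\int_{[0,1]^k}\zeta^{(k)}\Big(\tfrac{i-k+u_1+\dots+u_k}{n}\Big)\,du.
\]
Writing $F:=\zeta^{(k)}$ and letting $\rho$ be the density of $u_1+\dots+u_k$ for $u_1,\dots,u_k$ i.i.d.\ uniform on $[0,1]$ (equivalently the $k$-fold convolution power of $\1_{[0,1]}$, a probability density supported on $[0,k]$), this yields
\[
n^{-1+pk}V(\zeta,p;k)_n = \frac1n\sum_{i=k}^n\Big|\int_0^k F\big(\tfrac{i-k+v}{n}\big)\,\rho(v)\,dv\Big|^p.
\]

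Next I would prove the uniform bound $n^{-1+pk}V(\zeta,p;k)_n\leq\int_0^1|F(w)|^p\,dw$. Since $p\geq1$ and $\rho(v)\,dv$ is a probability measure, Jensen's inequality pulls $|\cdot|^p$ inside the integral; substituting $w=(i-k+v)/n$ in each summand and interchanging sum and integral then gives
\[
n^{-1+pk}V(\zeta,p;k)_n\leq\int_0^1|F(w)|^p\Big(\sum_{i=k}^n\rho(nw-i+k)\Big)\,dw\leq\int_0^1|F(w)|^p\,dw,
\]
where the final inequality uses the partition-of-unity identity $\sum_{m\in\Z}\rho(x-m)=1$ for a.e.\ $x$, which holds for $\1_{[0,1]}$ and is preserved under convolution with any probability density. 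Applying this bound to $\zeta-\tilde\zeta$ and invoking Minkowski's inequality yields the stability estimate
\[
\Big|\big(n^{-1+pk}V(\zeta,p;k)_n\big)^{1/p}-\big(n^{-1+pk}V(\tilde\zeta,p;k)_n\big)^{1/p}\Big|\leq\big\|\zeta^{(k)}-\tilde\zeta^{(k)}\big\|_{L^p([0,1])}
\]
for all $\zeta,\tilde\zeta\in W^{k,p}$.

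With these two ingredients the proof closes by approximation. When $F=\zeta^{(k)}$ is continuous, the integral representation and uniform continuity give $n^k\Delta_{i,k}^n\zeta=F((i-k)/n)+o(1)$ uniformly in $i$, so $n^{-1+pk}V(\zeta,p;k)_n$ equals, up to a vanishing error, the Riemann sum $\frac1n\sum_{i=k}^n|F((i-k)/n)|^p$, which converges to $\int_0^1|F(w)|^p\,dw$. For a general $\zeta\in W^{k,p}$ and $\varepsilon>0$ I would choose a continuous $\tilde F$ with $\|\zeta^{(k)}-\tilde F\|_{L^p([0,1])}<\varepsilon$, take $\tilde\zeta$ to be any $k$-fold primitive of $\tilde F$ (so $\tilde\zeta\in C^k\subseteq W^{k,p}$, and the choice of primitive does not matter since $\Delta_{i,k}^n$ annihilates polynomials of degree $<k$), and combine the continuous case with the stability estimate and $\big|\,\|\zeta^{(k)}\|_{L^p}-\|\tilde F\|_{L^p}\,\big|<\varepsilon$ to obtain $\limsup_{n}\big|(n^{-1+pk}V(\zeta,p;k)_n)^{1/p}-(\int_0^1|\zeta^{(k)}|^p)^{1/p}\big|\leq2\varepsilon$. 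Letting $\varepsilon\downarrow0$ and raising to the $p$th power finishes the proof.

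Most of this is routine — the iterated fundamental theorem of calculus behind the integral representation, and the elementary Riemann-sum limit in the continuous case. The step I expect to be the crux is the uniform bound: it rests on Jensen's inequality together with the partition-of-unity identity for the Irwin--Hall density $\rho$, and it is precisely what makes the density reduction legitimate for merely $L^p$ (non-continuous) $\zeta^{(k)}$. A secondary delicate point is justifying the integral representation when $\zeta$ is only assumed to be $k$-times differentiable rather than $C^k$, i.e.\ the absolute continuity of $\zeta,\dots,\zeta^{(k-1)}$.
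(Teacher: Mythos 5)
Your proposal is correct, and its skeleton (establish the result for a smooth class, then pass to general $\zeta\in W^{k,p}$ by an approximation controlled via Minkowski's inequality) matches the paper's. The difference is in how each step is implemented. The paper handles the smooth case by assuming $\zeta\in C^{k+1}$ and Taylor-expanding, getting $\Delta^n_{i,k}\zeta=n^{-k}\zeta^{(k)}((i-k)/n)+O(n^{-k-1})$, and then disposes of the general case in one sentence ("approximating $\zeta$ through a sequence of $C^{k+1}$-functions and Minkowski's inequality"), leaving implicit the uniform-in-$n$ estimate that makes this approximation legitimate. You instead work from the exact iterated-FTC representation $n^k\Delta^n_{i,k}\zeta=\int_0^k\zeta^{(k)}\big(\tfrac{i-k+v}{n}\big)\rho(v)\,dv$ with $\rho$ the Irwin--Hall density, which (i) lets you treat the smooth case under the weaker hypothesis that $\zeta^{(k)}$ is merely continuous, and (ii) via Jensen's inequality ($p\geq1$) and the partition-of-unity identity $\sum_m\rho(x-m)=1$ a.e.\ yields the quantitative bound $n^{-1+pk}V(\zeta,p;k)_n\leq\|\zeta^{(k)}\|_{L^p([0,1])}^p$ uniformly in $n$; applied to $\zeta-\tilde\zeta$ this is exactly the stability estimate that the paper's "Minkowski" step needs but does not spell out. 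So your argument is a more explicit and self-contained version of the same strategy, and arguably fills a small gap in the paper's write-up. The one point to keep flagged, as you do, is the reading of the definition of $W^{k,p}$: the representation $\Delta^n_{i,k}\zeta=\int_{[0,1/n]^k}\zeta^{(k)}(\cdot)\,dt$ requires $\zeta^{(k-1)}$ to be absolutely continuous (equivalently, that the a.e.\ derivative $\zeta^{(k)}\in L^p$ reproduces $\zeta$ by integration); this is the intended meaning — without it the lemma would fail for Cantor-type functions — and it is also what the paper's own approximation argument tacitly uses, as well as what Lemma \ref{abs-cont-sdf} actually delivers for the process $X$.
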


\begin{proof}
First we will assume that $\zeta\in C^{k+1}(\R)$ and afterwards we will prove the lemma by approximation. Successive applications of Taylor's theorem gives
\begin{equation}\label{est-g-sf}
\Delta^n_{i,k} \zeta= \zeta^{(k)}\Big(\frac{i-k}{n}\Big) \frac{1}{n^k} +a_{i,n}, \qquad n\in \N, \ k\leq i\leq n
\end{equation}
where $a_{i,n}\in \R$ satisfies 
\begin{equation}
|a_{i,n}| \leq K n^{-k-1}, \qquad n\in \N, \ k\leq i\leq n. 
\end{equation}
By Minkowski's inequality, 
\begin{align}
 {}& \Big|\Big(n^{kp-1} V(\zeta)_n\Big)^{1/p} - \Big(n^{kp-1}
  \sum_{j=k}^n \Big|\zeta^{(k)}\Big(\frac{i-k}{n}\Big) \frac{1}{n^k}\Big|^p\Big)^{1/p}\Big|\\ {}& \qquad  \leq 
\Big(n^{pk-1} \sum_{j=k}^n |a_{i,n}|^p\Big)^{1/p}
\leq K n^{-1-1/p}\to 0. 
\end{align}
By continuity of $\zeta^{(k)}$ we have 
\begin{equation}
n^{kp-1} \sum_{i=k}^n \Big|\zeta^{(k)}\Big(\frac{i-k}{n}\Big) \frac{1}{n^k}\Big|^p \to \int_0^1 |\zeta^{(k)}(s)|^p\,ds
\end{equation}
as $n\to\infty$,  which   shows  \eqref{con-g-1}.

The statement of the lemma  for a general  $\zeta \in W^{k,p}$ follows  by approximating $\zeta$ through a sequence of $C^{k+1}(\R)$-functions 
and Minkowski's inequality.  This completes the proof. 
\end{proof}

The Lemmas~\ref{abs-cont-sdf} and \ref{lem-f-65} yield Theorem~\ref{maintheorem}(iii). 
\qed

\section{Proof of Theorem \ref{sec-order}} \label{sec5}
\setcounter{equation}{0}
\renewcommand{\theequation}{\thesection.\arabic{equation}}

Throughout this section we suppose that the assumptions stated in  Theorem~\ref{sec-order} hold, which in particular means that $p<\beta/2$. Suppose in addition that $\alpha\in (0,k-1/\beta)$ which will be satisfied in both (i) and (ii) of Theorem~\ref{sec-order}, and note that this condition is  equivalent to $(\alpha-k)\beta>-1$. 
Without loss of generality we will assume that  the symmetric $\beta$-stable L\'evy process  $L$
has  scale parameter $\sigma=1$ and  (A) holds with $\delta=c_0=1$. 
 In the following subsection we will consider some notation and decompositions to be used in the proof of Theorem~\ref{sec-order}.

\subsection{Notation and outline of the proof} \label{sec6.1}

In addition to the notation introduced  in Subsection~\ref{main-thm-2} we define the following truncated version 
of  $Y^n_r$ in \eqref{eq:232452} by 
\begin{equation}
Y_{r}^{n,m}=   \int_{r-m}^r \phi_{r}^n(s)\,dL_s, \qquad n\in \N\cup\{\infty\}, m,r\geq 0, 
\end{equation}
where the function $\phi_{r}^n$ has been introduced in \eqref{eq:23412343}.
For $n,m\in\N$ we set 
\begin{equation}
 S_{n} =  \sum_{r=k}^n \Big(  | Y^{n}_r|^p- \E[ | Y^{n}_r|^p]\Big)\qquad \text{and}\qquad S_{n,m} = \sum_{r=k}^n \Big(  | Y^{n,m}_r|^p- \E[ | Y^{n,m}_r|^p]\Big) .
 \end{equation}
By \eqref{self-similar} we have that 
\begin{align} \label{statdec}
n^{p(\alpha + 1/\beta)}V(p;k)_n\eqschw S_n + (n-k+1)\E[ |Y^{n}_1|^p],
\end{align}
and hence when proving Theorem~\ref{sec-order} we may instead analyse the right-hand side of \eqref{statdec}. 
For all $n\in \N\cup\{\infty\}$, $j\geq 1$ and $m\geq 0$ we also set 
\begin{align}
 \rho^n_j = {}& \| \phi^n_j \|_{L^\beta(\R\setminus [0,1])},  \qquad  
\rho^{n,m}_j =   \| \phi^n_j \|_{L^\beta([j-m,j]\setminus [0,1])},\\
 U_{j,r}^n={}& \int_{r}^{r+1} \phi_{j}^n(u)\,dL_u. \label{udef}
 \end{align}
For all $r\in \R$ we consider  the following $\sigma$-algebras
\begin{equation}
  \g_r=\sigma(L_s-L_u: s,u\leq r)\qquad\text{and}\qquad  \g_{r}^1=\sigma(L_s-L_u: r\leq s,u\leq r+1 ).
\end{equation}
We note  that $(\g^1_r)_{r\geq 0}$ is not a filtration. 
Let $W$ denote  a symmetric $\beta$-stable random variable with scale parameter $\rho\in (0,\infty)$ and $\Phi_\rho:\R\to \R$ be defined by  
\begin{equation}\label{def-H-rho}
\Phi_\rho(x)= \E[| W+x|^p]- \E[ | W |^p],\qquad  x\in \R. 
\end{equation}
For all $n\geq 1, m,r\geq 0$ let 
\begin{align}
V^{n,m}_r= {}&  | Y_r^n |^p - |Y^{n,m}_r|^p  - \E\big[  | Y_r^n |^p - |Y^{n,m}_r|^p\big],
\\[10pt]
\label{eq-S-S'-2}
 \zeta_{r,j}^{n,m}= {}&  \E[ V^{n,m}_r |\g_{r-j+1}] - \E[ V^{n,m}_r |\g_{r-j}] - \E[ V^{n,m}_r | \g_{r-j}^1], \\[10pt]
\label{def-r-q-z}
R^{n,m}_r={}& \sum_{j=1}^\infty \zeta_{r,j}^{n,m} \qquad \quad  \text{and}\qquad \quad
 Q^{n,m}_r  = \sum_{j=1}^\infty  \E[ V^{n,m}_r | \g_{r-j}^1].
\end{align}
According to  Remark~\ref{rem-well-defined} below the  two series  $R^{n,m}_r$ and  $Q^{n,m}_r $  converge with probability one, and the following decomposition of $S_n-S_{n,m}$ holds with probability one
\begin{equation}\label{eq:283723}
S_{n}- S_{n,m} = \sum_{r=k}^n R^{n,m}_r +\sum_{r=k}^n Q^{n,m}_r.
\end{equation}  
Decompositions of the type  \eqref{eq:283723} has been successfully used in theory 
of discrete time moving averages, see e.g.\ Ho and Hsing~\cite{hh97},  and   will also play a crucial role in the proof of  Theorem~\ref{sec-order}. 
Indeed,  for the proof of Theorem~\ref{sec-order}(i) we will  choose $m=0$ in  \eqref{eq:283723} and since $S_{n,0}=0$ we have the   following decomposition of $S_n$: 
\begin{equation} \label{eq:7836638-22}
S_{n} = \sum_{r=k}^n R^{n,0}_r +\sum_{r=k}^n \Big( Q^{n,0}_r- Z_r\Big)+   \sum_{r=k}^n Z_r,
 \end{equation}
 where 
 \begin{equation}
Z_r =  \sum_{j=1}^\infty \Big\{\Phi_{\rho_j^\infty} ( U_{j,r}^\infty)-\E[\Phi_{\rho_j^\infty} (U_{j,r}^\infty)]\Big\}. 
\end{equation}
After suitable scaling we show that the first two  sums on the right-hand side of \eqref{eq:7836638-22} are  negligible, see \eqref{eq:827284-1}.  To analyse the third sum we note the random variables    $\{Z_r: r\geq 1\}$ are independent and identically distributed, which follows from their  definition. 
Hence, to complete the proof of Theorem~\ref{sec-order}(i), it is enough to show that the  common law of $\{Z_r: r\geq 1\}$  belong to the domain of attraction of an $(k-\alpha)\beta$-stable random variable, which is done in \eqref{eq:93673}. 

The main part of the proof of Theorem~\ref{sec-order}(ii) consists in showing that 
\begin{equation}\label{eq:29372937}
\lim_{m\to \infty} \limsup_{n\to \infty} \big(n^{-1} \E[(S_{n}-S_{n,m})^2]\big)  = 0,
\end{equation}
see \eqref{eq:189362720}. We prove  \eqref{eq:29372937} by estimating each of the two sums on the right-hand side of 
\eqref{eq:283723} separately. We note that for a fixed $m\geq 1$  the sequences $\{S_{n,m}\!:n\geq 1\}$ are $m$-dependent, which means that for all $k\geq 1$ the random variables 
$\{S_{1,m},\dots,S_{k,m} \}$ are independent of  $\{S_{n,m}\!:n\geq k+1+m\}$. Hence, using  a standard result for $m$-dependent sequences one can deduce a central limit theorem for the sequences $\{S_{n,m}: n\geq 1\}$ and by using \eqref{eq:29372937}  transfer this result to $S_n$, which will prove Theorem~\ref{sec-order}(ii).  
In the next subsection we  present some estimates which play a key role in the proof of Theorem~\ref{sec-order}.

\subsection{Preliminary estimates}

The assumption $|g^{(k)}(x)|\leq K x^{\alpha-k}$ for all $x>0$ implies that  
\begin{equation}\label{eq:72138}
\| \phi^n_j\|_{L^\beta([0,1])}\leq K j^{\alpha-k}
\end{equation}
for some finite constant $K$ which do not depend on $j\in \N$, $n\in \N\cup \{\infty\}$. The estimate \eqref{eq:72138} will be used 
repeatedly throughout the proof. In the following we will collect some estimates on the functions $\Phi_\rho$ defined in \eqref{def-H-rho} which will be used various places in the proofs. We first observe the identity
\begin{align} \label{xp}
|x|^p = a_p^{-1} \int_{\R} (1-\exp(iux)) |u|^{-1-p} du \qquad \text{for } p\in (0,1),
\end{align}
with $a_p= \int_\R (1-\exp(iu))|u|^{-1-p}\,du\in \R_+$, 
which can be shown by substitution $y=ux$. Secondly, for any 
deterministic function $\varphi :\R\rightarrow \R$ satisfying $\varphi\in L^{\beta}(\R)$, it holds that
\begin{align} \label{charfun}
\E\left[ \exp \left(iu \int_{\R} \varphi(s) \,dL_s \right)\right] = \exp \left(-|u|^{\beta} \int_{\R} |\varphi(s)|^{\beta} \,ds \right).
\end{align}
Applying the identities \eqref{xp} and \eqref{charfun}, we obtain the representation
\begin{equation}\label{rep-H-est-2}
\Phi_\rho(x)= a_p^{-1} \int_\R \big(1-\cos(ux) \big) e^{-\rho^\beta | u |^\beta}|u|^{-1-p}\,du. 
\end{equation} 
From \eqref{rep-H-est-2}, we deduce that $\Phi_\rho \in C^3(\R)$  and 
it holds that 
\begin{align}\label{est-H''}
\Phi_\rho^{'}(x) {}& =  a_p^{-1} \int_\R \sin(ux) |u|^{-p}e^{-\rho^\beta | u |^\beta}\,du \\
\Phi_\rho^{''}(x) {}& =   a_p^{-1} \int_\R \cos(ux) |u|^{1-p}e^{-\rho^\beta | u |^\beta}\,du 
 \\ \Phi_\rho^{'''}(x) {}& = - a_p^{-1} \int_\R \sin(ux) |u|^{2-p}e^{-\rho^\beta | u |^\beta}\,du
\end{align}
In the following we let $\epsilon>0$ be a fixed number. The identities at \eqref{est-H''}
imply that for $v=1,2,3$ there exists a finite constant $K_\epsilon$ such that for all $\rho\geq \epsilon$ and all $x\in \R$
\begin{equation}\label{est-H-28217}
 |\Phi_\rho^{(v)}(x)|\leq K_\epsilon .
\end{equation}
By  \eqref{rep-H-est-2} we also deduce the following estimate by several applications of the mean
value theorem 
\begin{equation}\label{est-H-inf-2}
|\Phi_\rho(x)-\Phi_\rho(y)| \leq K_\epsilon \Big(   \big(|x|\wedge 1+|y|\wedge 1\big)|x-y| \1_{\{|x-y|\leq 1\}} + |x-y|^p\1_{\{|x-y|>1\}}\Big) 
\end{equation} 
which holds for all $\rho\geq \epsilon$ and all $x,y\in\R$. Eq.~\eqref{est-H-inf-2} used on $y=0$ yields that 
\begin{align} \label{hestimate}
|\Phi_\rho (x)|\leq K_\epsilon (|x|^{p}\wedge |x|^{2} ).
\end{align}
In particular,  it implies that 
\begin{equation}\label{est-H-beta}
 |  \Phi_\rho(x)|\leq K_\epsilon |x|^l\qquad \text{for all }l\in (p,\beta).
\end{equation}
Moreover, for all $r\in [p,2]$ and   $\rho_1, \rho_2\geq \epsilon$  we  deduce by
 \eqref{rep-H-est-2}  that 
\begin{equation}\label{est-H-rho-1-2}
 | \Phi_{\rho_1}(x)- \Phi_{\rho_2}(x) |\leq K_\epsilon | \rho_1^\beta - \rho_2^\beta |\cdot |x|^{r} \qquad \text{for all }x\in \R. 
\end{equation}

 \begin{rem}\label{rem-well-defined} In the following we will show that the three series $R^{n,m}_r, Q^{n,m}_r$ and $Z^m_r$  defined in \eqref{def-r-q-z} converge almost surely, and the identity \eqref{eq:283723} holds almost surely. 
To show the above claim we will first prove that for all $n\geq 1$ and $m\geq 0$ the two series 
 \begin{equation}\label{eq:232323234}
   (a)\!:\sum_{j=1}^\infty   \E[ V^{n,m}_r | \g_{r-j}^1] \qquad \qquad (b)\!:\sum_{j=m+1}^\infty \Big(  \Phi_{\rho_j^\infty} ( U_{j,r}^\infty)-\E[\Phi_{\rho_j^\infty} (U_{j,r}^\infty)]\Big)
  \end{equation}
  converge absolutely with probability one. For $j\geq 1$ we have that 
  \begin{align}\label{eq:cal-Vsdfs}
  \E[ V^{n,m}_r | \g_{r-j}^1] = {}& \Phi_{\rho^n_{j}}(U^n_{r,r-j})- \Phi_{\rho^{n,m}_{j}} (U^n_{r,r-j} ) \1_{\{j\leq m\}} \\ {}&  - \E\Big[\Big( \Phi_{\rho^n_{j}}(U^n_{r,r-j})- \Phi_{\rho^{n,m}_{j}} (U^n_{r,r-j} ) \1_{\{j\leq m\}}\Big)\Big].\label{eq:1212124432} 
  \end{align}
  We have that  $\rho^n_j\to \| \phi_1^n \|_{L^\beta(\R)}>0$ as $j\to \infty$, and hence $\{\rho^n_{j} :j\geq N \}$ is bounded away from zero 
  for $N$ large enough.   
For all $j>\max\{m,N\}$ and all  $\gamma\in (p,\beta)$ we have 
\begin{align}
\E\big[ \big| \E[ V_{r}^{n,m}  |\g^{1}_{r-j}]\big| \big]\leq  {}& 2\E\big[ \big|\Phi_{\rho^n_{j}}(U^n_{r,r-j})\big|\big]\leq K \E[ |U^n_{r,r-j}|^{\gamma} ] 
\\ \leq {}& K \| \phi^n_j \|_{L^\beta([0,1])}^\gamma
\leq K j^{(\alpha-k)\gamma},
\end{align}
where the first inequality follows by  
\eqref{eq:1212124432}, the second inequality follows by  \eqref{est-H-beta} and the last inequality follows by \eqref{eq:72138}. 
By choosing $\gamma$ close enough to $\beta$ and using the assumption $(\alpha-k)\beta<-1$, it follows that the series (a) in \eqref{eq:232323234} converges absolutely almost surely.  
A similar application of  \eqref{est-H-beta} and \eqref{eq:72138}  also shows that the series (b) in \eqref{eq:232323234} converge absolutely almost surely.  Next we note that $V^{n,m}_r= \E[ V_r^{n,m} | \g_r]$ and $\E[ V_r^{n,m} | \g_{-j}]\to \E[ V^{n,m}_r] =0$ almost surely as $j\to \infty$. The latter  claim follows from Kolmogorov's 0-1 law and the backward martingale convergence theorem.   From these two properties we deduce that $V^{n,m}_r$ has the following  telescoping sum representation 
  \begin{equation}\label{eq:23232323}
  V^{n,m}_r = \sum_{j=1}^\infty \Big( \E[ V^{n,m}_r |\g_{r-j+1}] - \E[ V^{n,m}_r |\g_{r-j}] \Big), 
  \end{equation}
  where sum converge almost surely.  Convergence of the three  series in \eqref{eq:232323234} and  \eqref{eq:23232323} show the claim in the remark together with the observation that $S_n - S_{n,m} = \sum_{r=k}^n V^{n,m}_r$.  \qed
   \end{rem}
 
The following estimates will play a key role in the proof of Theorem~\ref{sec-order}.
 
 \begin{prop} \label{prop-key-est}
 Suppose that the  conditions of Theorem~\ref{sec-order} hold, and hence in particular $p<\beta/2$ and $\alpha<k-1/\beta$.  For all $\epsilon>0$ there exists a finite constant $K$ such that for all $n\geq 1$ and $m\geq 0$ we have the  following estimates: 
 \begin{align}
{}&  \E\Big[ \Big(\sum_{r=k}^n R^{n,m}_r\Big)^2\Big] \\ {}&  \leq K \Big( n \Big[(m+1)^{(\alpha-k)\beta+1}\log^2(m+1) + (m+1)^{2(\alpha-k)\beta +3}\Big] 
+ n^{2(\alpha-k)\beta +4}+\log(n)\Big).   \\
\label{eq:112132-1}
\shortintertext{If in addition $\alpha<k-2/\beta$ then the estimate \eqref{eq:112132-3} holds:  }  
\label{eq:112132-3}
 {}& \E\Big[ \Big(\sum_{r=k}^n Q^{n,m}_r\Big)^2\Big]\leq   K\Big(   n^{ (\alpha-k)\beta+3+\epsilon}   + n (m+1)^{(\alpha-k)\beta +2+\epsilon}+1 \Big).
\shortintertext{On the other hand, if  $\alpha>k-2/\beta$ then  the following estimate holds: }   \label{eq:112132-2}
 {}& 
 \E\Big[ \Big|\sum_{r=k}^n \Big( Q^{n,0}_r-Z_r\Big)\Big|\Big]\leq K \Big( n^{(\alpha-k)\beta +2+\epsilon}+ n^{1-\beta+\epsilon}\Big).
 \end{align}
 \end{prop}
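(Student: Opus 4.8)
The plan is to exploit the conditional orthogonality built into the decomposition \eqref{eq:283723} and thereby reduce each second moment to a sum over ``levels''. First I would record the elementary consequences of \eqref{eq-S-S'-2}--\eqref{def-r-q-z} and of the independence of $\g_{r-j}$ and $\g^1_{r-j}$ (with $\g_{r-j+1}=\g_{r-j}\vee\g^1_{r-j}$): the variable $\zeta^{n,m}_{r,j}$ is $\g_{r-j+1}$-measurable and obeys $\E[\zeta^{n,m}_{r,j}\mid\g_{r-j}]=0$ and $\E[\zeta^{n,m}_{r,j}\mid\g^1_{r-j}]=0$, whereas $\E[V^{n,m}_r\mid\g^1_{r-j}]$ is a function of the single unit block of increments of $L$ over $[r-j,r-j+1]$ only. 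Re-indexing the double sums $\sum_r\sum_j$ by the level $\ell=r-j\in\Z$, the first two facts force $\E[\zeta^{n,m}_{r,j}\zeta^{n,m}_{r',j'}]=0$, and the last forces $\E[V^{n,m}_r\mid\g^1_{r-j}]$ and $\E[V^{n,m}_{r'}\mid\g^1_{r'-j'}]$ to be independent, whenever $r-j\ne r'-j'$, so that
\[
\E\Big[\Big(\sum_{r=k}^n R^{n,m}_r\Big)^2\Big]=\sum_{\ell}\E\Big[\Big(\sum_{r}\zeta^{n,m}_{r,r-\ell}\Big)^2\Big],\qquad
\E\Big[\Big(\sum_{r=k}^n Q^{n,m}_r\Big)^2\Big]=\sum_{\ell}\var\Big(\sum_{r}\E[V^{n,m}_r\mid\g^1_\ell]\Big),
\]
the inner sums running over $r\in\{k,\dots,n\}$ with $r-\ell\ge1$; since each $Z_r$ is itself a centred sum of level contributions built from the limiting kernel $h_k$, the quantity $\sum_r(Q^{n,0}_r-Z_r)$ splits over levels in the same way and is then handled through $\E|\cdot|\le(\E(\cdot)^2)^{1/2}$.

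Second, I would estimate the building blocks. By \eqref{eq:cal-Vsdfs}, $\E[V^{n,m}_r\mid\g^1_{r-j}]$ equals, up to centring, $\Phi_{\rho^n_j}(U^n_{r,r-j})-\Phi_{\rho^{n,m}_j}(U^n_{r,r-j})\1_{\{j\le m\}}$, while $\zeta^{n,m}_{r,j}$ is the pure interaction term in the two-factor ANOVA decomposition of $\Phi_\rho(A+U)$ with respect to the independent pieces $A=\int_{-\infty}^{r-j}\phi^n_r\,dL$ and $U=U^n_{r,r-j}$, with $\rho$ the scale of the complementary piece (bounded below once $j$ exceeds a fixed $j_0$, so that \eqref{est-H-28217}, \eqref{hestimate}--\eqref{est-H-beta} and \eqref{est-H-inf-2}--\eqref{est-H-rho-1-2} apply). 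Because the underlying stable integrals have infinite variance, every estimate has to be split at level $1$: applying \eqref{est-H-inf-2} on the two ranges $\{|\cdot|\le1\}$ and $\{|\cdot|>1\}$ and using $2p<\beta$, together with \eqref{eq:72138} and its tail analogue $\|\phi^n_j\|_{L^\beta((-\infty,0))}=\|D^kg_n\|_{L^\beta((j,\infty))}\le Kj^{\alpha-k+1/\beta}$, yields bounds $\|\zeta^{n,m}_{r,j}\|_2\le K\psi_m(j)$ and $\|\E[V^{n,m}_r\mid\g^1_{r-j}]\|_2\le K\widetilde\psi_m(j)$ with $\psi_m,\widetilde\psi_m$ explicit powers of $j$ carrying an extra $\log j$ at the critical exponents ($p=1$, resp.\ $\theta=p$). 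The crucial refinement concerns $j\le m$: there the increment over $[r-j,r-j+1]$ affects $Y^n_r$ and $Y^{n,m}_r$ through the same weight, so only the mass of $\phi^n_j$ outside $[j-m,j]$ survives, and the exact computation $(\rho^n_j)^\beta-(\rho^{n,m}_j)^\beta=\int_m^\infty|D^kg_n(w)|^\beta\,dw\le Km^{(\alpha-k)\beta+1}$ combined with \eqref{est-H-rho-1-2} produces the $(m+1)^\bullet$ factors in the statement.

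Third comes the assembly of the level sums, and it is here that the naive per-level Cauchy--Schwarz bound $\E[(\sum_r\zeta^{n,m}_{r,r-\ell})^2]\le(\sum_r\|\zeta^{n,m}_{r,r-\ell}\|_2)^2$ is insufficient: for levels $\ell$ far to the left it overestimates by a whole power of $n$. The remedy is to extract the cancellation in the sum over $r$ at fixed level: by discrete summation by parts $\sum_{r}D^kg_n(r-\cdot)$ collapses to a $(k-1)$st difference evaluated at the endpoints $\{k,\dots,n\}$, which gains one power in $j$ and tames the far levels; in the $Q$-case one first reduces the oscillation of $\sum_r\Phi_{\rho^n_{r-\ell}}(U^n_{r,\ell})$ in the single block via the uniform bound \eqref{est-H-28217} on $\Phi'_\rho$. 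Feeding the resulting level bounds into $\sum_\ell(\cdot)$, splitting the $j$-sums at $j=m$ and at the values where the exponents cross $-1$ (this is where $p<\beta/2$, $\alpha<k-1/\beta$, and --- for the $Q$-estimate --- the stronger $\alpha<k-2/\beta$ enter, the latter making the $j$-series for $Q^{n,m}_r$ converge in $L^2$), and separating the $O(1)$ endpoint levels where the $j$-range is itself of order $n$, reproduces the three regimes: the $n(m+1)^\bullet$ and $n(m+1)^\bullet\log^2(m+1)$ terms from the long ``$j\le m$'' ranges, the $n^{2(\alpha-k)\beta+4}$, resp.\ $n^{(\alpha-k)\beta+3+\epsilon}$, terms from the endpoint levels, and the residual $\log n$, resp.\ $n^{1-\beta+\epsilon}$, terms from the critical logarithms and, for $Q^{n,0}-Z$, from replacing $g_n$ by $(\cdot)_+^\alpha$, whose error is controlled by \eqref{est-H-rho-1-2}, \eqref{est-H-inf-2} and the $L^\beta$-convergence estimates already established in Subsection~\ref{main-thm-2}.

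The main obstacle will be these two intertwined points in the core of the argument: obtaining $\|\zeta^{n,m}_{r,j}\|_2$ (and the analogous quantities) sharply enough, which --- because of the absent second moments of the integrands --- cannot be done by plain moment estimates but only by carefully tracking the $1$-truncation in \eqref{est-H-inf-2}--\eqref{est-H-rho-1-2} and the $j\le m$ cancellation with the correct $m$-decay; and realising the summation-by-parts cancellation over $r$ so that the far-level contributions collapse to the claimed $n^{2(\alpha-k)\beta+4}$ rather than a spurious larger power. A crude bound at either place would destroy both the $n\log^2(m+1)$ shape of the $R$-estimate and the limit $n^{-1}\E[(S_n-S_{n,m})^2]\to0$ needed afterwards. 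The rest is bookkeeping: keeping the case analysis --- interior vs.\ endpoint level, $j\le m$ vs.\ $j>m$, exponent above, at, or below $-1$ --- consistent so that every regime is accounted for exactly once and the logarithms land where claimed.
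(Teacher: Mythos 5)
Your skeleton matches the paper's: the substitution $s=r-j$, the observation that the level sums are martingale differences (orthogonal across levels), the identification of $\zeta^{n,m}_{r,j}$ as the interaction term of a two-factor decomposition, the split $j\le m$ versus $j>m$ with the gain $\big|(\rho^n_j)^\beta-(\rho^{n,m}_j)^\beta\big|\le K(m+1)^{(\alpha-k)\beta+1}$, and the role of $p<\beta/2$. But the core of the argument is missing, and the substitute you offer would not work. The whole $R$-estimate \eqref{eq:112132-1} hinges on a \emph{sharp} second-moment bound for the interaction term, namely $\E[|\zeta^{n,m}_{r,j}|^2]\le K(m+1)^{(\alpha-k)\beta+1}j^{(\alpha-k)\beta}$ for $j\le m$ and $\le Kj^{2(\alpha-k)\beta+1}$ for $j>m$ (Lemma~\ref{eq:8937739}). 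This cannot be extracted from the Lipschitz-type estimate \eqref{est-H-inf-2} alone, which only sees the single projection and gives decay of order $j^{(\alpha-k)\beta/2}$; with that weaker decay the per-level bounds are too large (e.g.\ in the regime of Theorem~\ref{sec-order}(i), $m=0$, one gets a contribution of order $n^{(\alpha-k)\beta+3}$, and $n^{2/((\alpha-k)\beta)}n^{(\alpha-k)\beta+3}\not\to0$ for $(k-\alpha)\beta$ near $3/2$), and there is no $m$-decay at all, so \eqref{eq:189362720} would fail. The paper obtains the sharp bound by a \emph{second} telescoping, over the distance $l$ to the far past: $\zeta^{n,m}_{r,j}=\sum_{l\ge j}\vartheta^{n,m}_{r,j,l}$ with the $\vartheta$'s orthogonal in $l$ (Lemma~\ref{lem-est-theta}), each $\vartheta$ being written as a genuine second-order finite difference of $\Phi_\rho$ in two independent arguments, and then controlled through the derivative bounds $|\Phi^{(v)}_\rho(x)|\le K(1\wedge|x|^{p-v})$ and the iterated-integral estimates of Lemma~\ref{lemma-est-phi''} (themselves proved by a Fourier argument). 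That is the cancellation mechanism, and nothing in your proposal produces it.

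Your proposed remedy — summation by parts over $r$ at fixed level, claiming $\sum_r D^k g_n(r-\cdot)$ collapses to a boundary term — is both unnecessary and unusable. Unnecessary, because the "naive" per-level Minkowski bound $\E[(\sum_r\zeta^{n,m}_{r,r-\ell})^2]\le(\sum_r\|\zeta^{n,m}_{r,r-\ell}\|_{L^2})^2$ is exactly what the paper uses, and it already yields the $n^{2(\alpha-k)\beta+4}$, $\log n$ and $n(m+1)^{\bullet}$ terms once the sharp $\zeta$-estimate is in hand. Unusable, because $\zeta_{r,r-\ell}$ is a nonlinear functional of the path (through $|\cdot|^p$, i.e.\ $\Phi_\rho$), so a linear cancellation of the kernels $D^k g_n(r-\cdot)$ summed over $r$ cannot be transferred to the sum of the $\zeta$'s; there is no identity that lets you telescope inside the conditional expectations. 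A smaller but real issue is the bound \eqref{eq:112132-2}: bounding $\E|\sum_r(Q^{n,0}_r-Z_r)|$ by an $L^2$ norm and "the same level splitting" does not by itself produce the $n^{1-\beta+\epsilon}$ term; the paper instead works directly in $L^1$ with the decomposition $H^{(1)}-H^{(2)}+H^{(3)}$, the kernel comparison $\|\phi^n_j-\phi^\infty_j\|_{L^\beta([0,1])}\le Kn^{-1}j^{\alpha-k+1}$, Lemma~\ref{est-H-inf} and Lemma~\ref{helplem1}; that part of your plan would need to be rebuilt along these lines as well.
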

 
 The proof of Proposition~\ref{prop-key-est} is carried out in Subsections~\ref{proof-pro-key} and \ref{proof-prop-232}. 
We will also need the following inequality.
\begin{lem} \label{helplem1}
Assume that the conditions of Theorem~\ref{sec-order} hold. Then there exists a finite constant $K$ such that for all $j,n\geq 1$ we have 
\begin{align*}
\left| \|\phi^n_j\|_{L^\beta(\R)}^{\beta} - \| \phi^\infty_j\|_{L^\beta(\R)}^{\beta} \right | \leq K 
\begin{cases}
n^{-1} \qquad\qquad  & \text{when } \alpha \in (0,k-2/\beta) \\
n^{(\alpha-k)\beta+1} & \text{when } \alpha \in (k-2/\beta, k-1/\beta)
\end{cases}
\end{align*}
where the functions $\phi^n_j$ and $\phi^\infty_j$ has been introduced at \eqref{eq:23412343}.
\end{lem}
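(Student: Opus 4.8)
The plan is to write the difference of $\beta$-th powers of $L^\beta$-norms as a single integral over $\R$ of the difference of $\beta$-th powers of the integrands, and then split the real line into the regions where the estimates from Lemma~\ref{helplem} give different orders of magnitude. Recall $\phi^n_j(s) = D^k g_n(j-s)$ and $\phi^\infty_j(u) = h_k(j-u) = D^k\phi(j-u)$ with $\phi(x)=x_+^\alpha$, where $g_n(x) = n^\alpha g(x/n)$. Writing out the $L^\beta$-norms as integrals and substituting, the quantity to estimate is
\[
\Big|\int_\R |D^k g_n(x)|^\beta\,dx - \int_\R |D^k\phi(x)|^\beta\,dx\Big|.
\]
Since $g_n(x)=n^\alpha g(x/n)$ and $g(t)\sim t^\alpha$ as $t\downarrow 0$ with the derivative bounds $|g^{(k)}(t)|\le Kt^{\alpha-k}$ (now assumed for all $t>0$, by the hypotheses of Theorem~\ref{sec-order}), the functions $D^kg_n$ and $D^k\phi$ agree in the limit $n\to\infty$ and we just need to quantify the rate. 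I would use the elementary inequality $||a|^\beta-|b|^\beta|\le K(|a|^{\beta-1}+|b|^{\beta-1})|a-b|$ when $\beta\ge 1$, and the inequality $||a|^\beta-|b|^\beta|\le |a-b|^\beta$ when $\beta\le 1$, to reduce everything to estimating $|D^kg_n(x) - D^k\phi(x)|$ together with the sizes of $D^kg_n(x)$ and $D^k\phi(x)$ themselves.

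The key step is the pointwise estimate on $\psi_n := g_n - \phi|_{[0,\infty)}$, i.e.\ $\psi_n(x) = n^\alpha g(x/n) - x_+^\alpha$, and its $k$-th order difference $D^k\psi_n$. On the region $x\in(0,c]$ for small fixed $c$, using $g(t) = t^\alpha f(t)$ with $f(t)\to 1$, one has $|\psi_n(x)| \le |x|^\alpha\,|f(x/n)-1|$; combined with the derivative bounds this yields, after a Taylor expansion of order $k$, a bound of the form $|D^k\psi_n(x)| \le K n^{-1}|x|^{\alpha-k+1}$ for $x$ in a bounded set (this is where the factor $n^{-1}$ comes from — the difference between $g_n$ and the pure power is of order $1/n$ relative to the leading term, by differentiability of $f$ at $0$, which is guaranteed since $f$ is $k$-times continuously right differentiable at $0$ under the hypotheses of Theorem~\ref{sec-order}). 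On the region $x>c$ one uses directly the bounds $|D^kg_n(x)|\le K(x-k)^{\alpha-k}$ (from \eqref{lemest2}) and $|D^k\phi(x)| = |h_k(x)|\le Kx^{\alpha-k}$, which are each in $L^\beta$ since $(\alpha-k)\beta<-1$, and the contribution is controlled by a tail integral of $x^{(\alpha-k)\beta}$.

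Putting the pieces together, the integral over the bounded region near $0$ contributes, after raising to the $\beta$ and integrating, a term of order $n^{-\beta}\int_0^c |x|^{(\alpha-k+1)\beta}\,dx$ times lower-order powers — which is $O(n^{-1})$ or better when $(\alpha-k+1)\beta>-1$, i.e.\ when $\alpha>k-1-1/\beta$, which certainly holds when $\alpha>k-2/\beta$ since then $k-1-1/\beta<k-2/\beta$ fails in general — so here one must be careful and distinguish the two cases exactly as in the statement. When $\alpha\in(0,k-2/\beta)$ the integral $\int_0^\infty|x|^{(\alpha-k+1)\beta}\,dx$ near $0$ converges and one gets the clean rate $n^{-1}$; when $\alpha\in(k-2/\beta,k-1/\beta)$ the near-$0$ singularity is stronger, the integral of $|D^k\psi_n|^\beta$ over $(0,c)$ must be cut at scale $1/n$ (below which $D^kg_n$ and the power are comparable in size, both of order $n^{k-\alpha}\cdot(\text{something})$), and balancing the two regimes produces the rate $n^{(\alpha-k)\beta+1}$. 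I expect the main obstacle to be exactly this near-zero scaling analysis: getting the sharp constant-free bound in the borderline case requires splitting $(0,c)$ at $x\sim 1/n$, estimating $|D^kg_n(x)|$ and $|D^k\phi(x)|$ separately on $(0,1/n)$ (where neither the difference estimate nor the individual $L^\beta$-tail estimate alone suffices) using the crude bound $|D^kg_n(x)|\le Kn^{k-\alpha}x^\alpha\cdot$(bounded) together with $|h_k(x)|\le K|x|^\alpha$ for small $x$, and then checking that the resulting two contributions match up to the claimed order $n^{(\alpha-k)\beta+1}$.
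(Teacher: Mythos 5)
Your high-level plan (compare $\int_0^\infty |D^kg_n(x)|^\beta dx$ with $\int_0^\infty |h_k(x)|^\beta dx$, using a difference estimate where the two integrands are close and crude size estimates in the tail) is the right one, but your partition of the line is wrong in a way that kills the proof. You split $(0,\infty)$ into $(0,c]$ (difference estimate) and $(c,\infty)$ (individual bounds $|D^kg_n(x)|, |h_k(x)|\le Kx^{\alpha-k}$) with $c$ a \emph{fixed} constant. On $(c,\infty)$ this only gives $\int_c^\infty x^{(\alpha-k)\beta}\,dx$, which is a fixed positive constant, not a quantity decaying in $n$ — so you get the useless bound $O(1)$. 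The entire content of the lemma sits in the region $[k,n]$, where you must prove the difference estimate $|D^kg_n(x)-h_k(x)|\le K n^{-1}x^{\alpha-k+1}$ \emph{uniformly up to} $x=n$, and only beyond $x=n$ may you fall back on the tail bound $\int_n^\infty x^{(\alpha-k)\beta}dx\le Kn^{(\alpha-k)\beta+1}$. Establishing the difference estimate on all of $[k,n]$ is exactly the nontrivial step: the paper Taylor-expands $f$ at $x/n$ (using $g(t)=t^\alpha f(t)$ and the $C^k$ right-differentiability of $f$ at $0$) to decompose $D^kg_n=\sum_{l=0}^k r_{l,n}$, where $r_{l,n}$ involves a $(k-l)$-th order difference of $x_+^\alpha$ and carries a factor $n^{-l}$; the $l=0$ term is $f(x/n)h_k(x)$, whence $|r_{0,n}(x)-h_k(x)|\le K(x/n)|h_k(x)|$, and the $l\ge1$ terms are $O(n^{-l}x^{\alpha-k+l})\le O(n^{-1}x^{\alpha-k+1})$ for $x\le n$. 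Relatedly, the "main obstacle" you identify at scale $x\sim 1/n$ is a red herring: near $0$ both $D^kg_n$ and $h_k$ are $O(x^\alpha)$ with difference $O(n^{-1}x^{\alpha})$, there is no singularity there (this region contributes $O(n^{-1})$ outright), and your claimed crude bound $|D^kg_n(x)|\le Kn^{k-\alpha}x^\alpha$ does not correspond to the actual behaviour. The delicate scale is $x\sim n$, not $x\sim 1/n$.

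A second genuine problem is your case split on $\beta$. For $\beta\le 1$ you propose subadditivity, $||a|^\beta-|b|^\beta|\le|a-b|^\beta$, which throws away the factor $n^{-1}$ too early: even with the correct pointwise estimate it yields $n^{-\beta}\int_k^n x^{(\alpha-k+1)\beta}dx$, and when $\beta<1$ and $\alpha<k-2/\beta$ (which is possible for $k\ge 3$ and is precisely the regime where the lemma must deliver $n^{-1}$) one has $(\alpha-k+1)\beta<-1$, the integral is $O(1)$, and you are left with $n^{-\beta}\gg n^{-1}$. The fix is to use the mean-value inequality $\big||a|^\beta-|b|^\beta\big|\le K\max\{|a|^{\beta-1},|b|^{\beta-1}\}\,|a-b|$, valid for every $\beta\in(0,2)$, which is what the paper does; combined with $|h_k(x)|\asymp x^{\alpha-k}$ on $[k,n]$ this keeps the full factor $n^{-1}$ and produces $Kn^{-1}\int_k^n x^{(\alpha-k)\beta+1}dx$, from which the dichotomy $n^{-1}$ versus $n^{(\alpha-k)\beta+1}$ falls out according to whether $(\alpha-k)\beta+1$ is below or above $-1$.
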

The proof  of Lemma~\ref{helplem1} is   postponed to Subsection~\ref{help-lemma1}. We are now ready to show Theorem~\ref{sec-order}(i).

\subsection{Proof of Theorem~\ref{sec-order}(i)} \label{sec6.3}

To prove Theorem~\ref{sec-order}(i) we will first  state and prove the following lemma:

\begin{lem}\label{est-H-inf}
There exists $\delta>0$ and a finite constant $K>0$ such that for all $\epsilon\in (0,\delta)$, $\rho>\delta$, $\kappa,\tau\in L^\beta([0,1])$ with   $\| \kappa \|_{L^\beta([0,1])} , \| \tau \|_{L^\beta([0,1])}\leq 1$ the following inequality holds
\begin{align}
{}& \Big\| \Phi_\rho\Big(\int_0^1 \kappa(s)\,dL_s\Big)-\Phi_\rho\Big(\int_0^1 \tau(s)\,dL_s\Big) \Big\|_{L^{(k-\alpha)\beta+\epsilon}}\\ {}&\qquad  \leq K 
\Big( \| \kappa - \tau \|_{L^\beta([0,1])}+
\| \kappa - \tau \|_{L^\beta([0,1])}^{\frac{1}{k-\alpha+\epsilon/\beta}} \Big).\label{eq:97236554340}
\end{align}
Moreover, 
\begin{align}
& \Big\| \Phi_\rho\Big(\int_0^1 \kappa(s)\,dL_s\Big)-\Phi_\rho\Big(\int_0^1 \tau(s)\,dL_s\Big) \Big\|_{L^{1}} \\ 
&  \leq K 
\begin{cases}\Big( \|\kappa\|_{L^\beta([0,1])}^{\beta-1-\epsilon} + \|\tau\|_{L^\beta([0,1])}^{\beta-1-\epsilon} \Big) \| \kappa - \tau \|_{L^\beta([0,1])}+
\| \kappa - \tau \|_{L^\beta([0,1])}^{\beta} & \beta>1 \medskip \\
\| \kappa - \tau \|_{L^\beta([0,1])}^{\beta-\epsilon}  & \beta\leq 1. 
\end{cases}
\label{est-234l1j32lh3}
\end{align}
\end{lem}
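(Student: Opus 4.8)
The plan is to prove Lemma~\ref{est-H-inf} by a unified strategy that splits the increment into a region where the two stochastic integrals are close (in which we exploit the smoothness estimates on $\Phi_\rho$ from \eqref{est-H-28217}) and a region where they are far apart (in which we use the sub-linear growth bound \eqref{hestimate} and moment estimates for $\beta$-stable integrals). Write $A=\int_0^1\kappa(s)\,dL_s$, $B=\int_0^1\tau(s)\,dL_s$, and $\delta_0=\|\kappa-\tau\|_{L^\beta([0,1])}$; by \eqref{charfun} the random variable $A-B$ is symmetric $\beta$-stable with scale parameter $\delta_0$, and $A,B$ have scale parameters at most $1$. The core of the argument is the pointwise bound \eqref{est-H-inf-2}, which gives
\[
|\Phi_\rho(A)-\Phi_\rho(B)|\leq K_\epsilon\Big((|A|\wedge 1+|B|\wedge 1)\,|A-B|\,\1_{\{|A-B|\leq 1\}}+|A-B|^p\,\1_{\{|A-B|>1\}}\Big).
\]

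For \eqref{eq:97236554340} I would take the $L^{q}$-norm with $q=(k-\alpha)\beta+\epsilon<\beta$ (recall $\alpha>k-1/\beta$, so $q<\beta$ precisely when $\epsilon$ is small, which is what forces the restriction to this exponent). On the event $\{|A-B|\leq 1\}$ bound the factor $|A|\wedge 1+|B|\wedge 1$ crudely by a constant, so the contribution is $\lesssim \||A-B|\,\1_{\{|A-B|\le 1\}}\|_{L^q}\leq \|A-B\|_{L^q}\lesssim \delta_0$, using that a symmetric $\beta$-stable variable has finite $L^q$ moments for $q<\beta$ with $L^q$-norm proportional to the scale. On the event $\{|A-B|>1\}$ the contribution is $\||A-B|^p\,\1_{\{|A-B|>1\}}\|_{L^q}=\big(\E[|A-B|^{pq}\1_{\{|A-B|>1\}}]\big)^{1/q}$; since $pq<\beta$ we may instead estimate $\E[|A-B|^{pq}\1_{\{|A-B|>1\}}]\leq \P(|A-B|>1)^{1-pq/r}\cdots$ — more cleanly, use the tail bound $\P(|A-B|>x)\leq K\delta_0^\beta x^{-\beta}$ and integrate $x^{pq-1}$ over the tail to get a bound $\lesssim \delta_0^{?}$; after optimising exponents this produces the term $\delta_0^{1/(k-\alpha+\epsilon/\beta)}$. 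I expect the bookkeeping of exactly which power of $\delta_0$ emerges — and checking that the exponent $1/(k-\alpha+\epsilon/\beta)$ is the sharp one coming out of balancing $\P(|A-B|>x)x^{pq}\sim$ const against $x=\delta_0^{-1}$-type thresholds — to be the delicate part; one should split the tail integral at $x=1$ versus $x=\delta_0^{-s}$ for a suitable $s$ and combine via Hölder rather than trying to get it in one shot.

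For \eqref{est-234l1j32lh3} I would again use \eqref{est-H-inf-2}, now with the $L^1$-norm. On $\{|A-B|\le1\}$, Hölder's inequality gives $\E[(|A|\wedge 1+|B|\wedge 1)|A-B|]\leq \big(\E[(|A|\wedge1+|B|\wedge1)^{q'}]\big)^{1/q'}\|A-B\|_{L^p}$ for conjugate exponents; here one wants to extract the factors $\|\kappa\|_{L^\beta}^{\beta-1-\epsilon}$ and $\|\tau\|_{L^\beta}^{\beta-1-\epsilon}$, which requires the refined moment estimate $\E[|A|\wedge 1]\lesssim \|\kappa\|_{L^\beta}^{\beta-1-\epsilon}$ (valid for $\beta>1$, obtained from $\E[|A|\wedge1]=\int_0^1\P(|A|>x)\,dx\leq \int_0^1(1\wedge K\|\kappa\|_{L^\beta}^\beta x^{-\beta})\,dx$ and splitting the integral at $x=\|\kappa\|_{L^\beta}$), together with $\|A-B\|_{L^1}\lesssim\delta_0$. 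On $\{|A-B|>1\}$ one gets $\E[|A-B|^p\1_{\{|A-B|>1\}}]\lesssim \delta_0^\beta$ from the tail bound when $\delta_0\le 1$ (and the stated bound is trivial otherwise). For $\beta\leq 1$ the increment $A-B$ has no first moment, so instead bound $|\Phi_\rho(A)-\Phi_\rho(B)|\lesssim |A-B|^{\beta-\epsilon}$ directly using \eqref{est-H-beta}-type reasoning (i.e.\ $\Phi_\rho$ is globally $(\beta-\epsilon)$-Hölder, which follows by interpolating the Lipschitz-near-origin and bounded behaviour), and take expectations: $\E[|A-B|^{\beta-\epsilon}]\lesssim \delta_0^{\beta-\epsilon}$.

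The main obstacle I anticipate is not any single estimate but the precise tracking of exponents: getting exactly $\delta_0^{1/(k-\alpha+\epsilon/\beta)}$ in \eqref{eq:97236554340} and exactly $\|\kappa\|^{\beta-1-\epsilon}$ in \eqref{est-234l1j32lh3} requires choosing the Hölder split points and auxiliary exponents with some care, and one must keep the constant $K$ independent of $\kappa,\tau$ (only depending on $\delta,\epsilon$) — this is where the hypotheses $\|\kappa\|_{L^\beta},\|\tau\|_{L^\beta}\leq 1$ and $\rho>\delta$ are used, the latter through the uniform bounds \eqref{est-H-28217}. A secondary point to be careful about is that $\Phi_\rho$ genuinely depends on $\rho$, but all the estimates \eqref{est-H-28217}, \eqref{est-H-inf-2}, \eqref{hestimate}, \eqref{est-H-beta} are uniform over $\rho\geq\epsilon$, so it suffices to invoke them with $\epsilon=\delta$ throughout.
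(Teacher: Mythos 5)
Your overall strategy is the same as the paper's: reduce everything to the pointwise bound \eqref{est-H-inf-2}, split on $\{|A-B|\le 1\}$ versus $\{|A-B|>1\}$, and control the resulting moments of the symmetric $\beta$-stable variable $A-B$ (scale $\delta_0=\|\kappa-\tau\|_{L^\beta}$) by truncated moment/tail estimates, with a H\"older argument for the $\beta>1$ branch of \eqref{est-234l1j32lh3} and a global $(\beta-\epsilon)$-H\"older bound on $\Phi_\rho$ for $\beta\le 1$. However, there is a genuine gap in your treatment of \eqref{eq:97236554340}. You assert that $q=(k-\alpha)\beta+\epsilon<\beta$, "recalling $\alpha>k-1/\beta$" — but the standing assumption is the opposite, $\alpha<k-1/\beta$, so in fact $q>1$, and $q<\beta$ holds only when $k-\alpha<1-\epsilon/\beta$. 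The case $k-\alpha\ge 1$ is perfectly admissible (e.g.\ $k=2$, $\beta=1.8$, $\alpha=1\in(k-2/\beta,k-1/\beta)$), and there your key step $\||A-B|\1_{\{|A-B|\le1\}}\|_{L^q}\le\|A-B\|_{L^q}\lesssim\delta_0$ breaks down, since a $\beta$-stable variable has no finite $q$-th moment for $q\ge\beta$. The paper's proof splits exactly on this dichotomy: for $k-\alpha<1$ one argues as you do, while for $k-\alpha\ge1$ one uses the truncated moment bound $\E[(|A-B|\wedge1)^{q}]\le K\delta_0^{\beta}$ (the analogue of \eqref{eq:8276y26}), which is why the second term $\delta_0^{\beta/q}=\delta_0^{1/(k-\alpha+\epsilon/\beta)}$ appears in the statement. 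Relatedly, your handling of the far term is left as "optimise exponents / split the tail at $x=\delta_0^{-s}$"; no optimisation is needed: since $pq<\beta$ (using $p<\beta/2$ and $(k-\alpha)\beta<2$ for small $\epsilon$), the tail bound gives directly $\E[|A-B|^{pq}\1_{\{|A-B|>1\}}]\le K\delta_0^{\beta}$, i.e.\ exactly the contribution $\delta_0^{\beta/q}$ after taking the $1/q$-th power.

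The second part of your argument is essentially sound but sloppily stated in the $\beta>1$ branch: the H\"older step must pair $\big(\E[(|A|\wedge1+|B|\wedge1)^{q'}]\big)^{1/q'}$ with $\||A-B|\1_{\{|A-B|\le1\}}\|_{L^r}$ for conjugate exponents with $r\in(1,\beta)$ close to $\beta$ (not with $\|A-B\|_{L^p}$, since $p<1$ cannot be a conjugate exponent), and the factor you need is the truncated $q'$-th moment bound $\E[(|A|\wedge1)^{q'}]\le K\|\kappa\|_{L^\beta}^{\beta}$ for $q'>\beta$ — giving $\|\kappa\|^{\beta/q'}\le\|\kappa\|^{\beta-1-\epsilon}$ — rather than the first-moment estimate $\E[|A|\wedge1]$ you quote. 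With those corrections (which is precisely how the paper argues, via Lemma~\ref{sim-est-W}), and your $\beta\le1$ argument, which is fine, the proof goes through; but as written, the missing case $k-\alpha\ge1$ in \eqref{eq:97236554340} is a real gap stemming from a reversed hypothesis, not just unfinished bookkeeping.
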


To prove Lemma~\ref{est-H-inf} we will among others use the  following simple estimates. 
 
\begin{lem}\label{sim-est-W}
 Let $W$ be a  symmetric $\beta$-stable random variable with scale parameter 
 $\rho$.  
 \begin{enumerate}\label{eq:7366829-1}
 \item [(i)] \label{eq:7366829-1} Let  $\gamma<\beta$. For all $\rho \leq 1$    we have that  
\begin{equation}\label{eq:836}
 \E[ |  W |^\gamma\1_{\{| W |\geq 1\}}]\leq K \rho^{\beta}.
\end{equation}
\item [(ii)] \label{eq:7366829-2} Let  $\gamma>\beta$. For  all $\rho \leq 1$ we have that 
\begin{equation}\label{eq:8276y26}
\E[( |W|\wedge 1)^\gamma ] \leq K \rho^\beta.
\end{equation}
 \end{enumerate}
\end{lem}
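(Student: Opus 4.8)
The plan is to reduce both parts to a single standard tail bound for the symmetric $\beta$-stable law and then carry out two elementary truncated moment computations, keeping track of the fact that all constants must be uniform in $\rho\in(0,1]$. Since the scale parameter acts by scaling, $W\eqschw\rho W_0$, where $W_0$ is symmetric $\beta$-stable with scale parameter $1$. Recall that for $\beta\in(0,2)$ there is a finite constant $C$ with $\P(|W_0|>x)\le C(1\wedge x^{-\beta})$ for all $x>0$ (see e.g.\ \cite[Property~1.2.15]{SamTaq}); this is the only input on stable laws that will be needed.

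For part~(i), set $a=1/\rho\ge 1$; using $W\eqschw\rho W_0$ one has $\E[|W|^\gamma\1_{\{|W|\ge1\}}]=\rho^\gamma\E[|W_0|^\gamma\1_{\{|W_0|\ge a\}}]$. I would bound the truncated moment by the layer-cake identity
\[
\E[|W_0|^\gamma\1_{\{|W_0|\ge a\}}]=a^\gamma\,\P(|W_0|\ge a)+\gamma\int_a^\infty x^{\gamma-1}\P(|W_0|>x)\,dx .
\]
The first term is at most $Ca^{\gamma-\beta}$, and since $\gamma<\beta$ the integral is dominated by $C\int_a^\infty x^{\gamma-1-\beta}\,dx\le Ka^{\gamma-\beta}$, the hypothesis $\gamma<\beta$ being exactly what makes this integral converge. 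Substituting $a=1/\rho$ yields $\E[|W|^\gamma\1_{\{|W|\ge1\}}]\le K\rho^\gamma\rho^{\beta-\gamma}=K\rho^\beta$.

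For part~(ii), again with $a=1/\rho\ge1$, split $(|W|\wedge1)^\gamma=(\rho|W_0|)^\gamma\1_{\{|W_0|\le a\}}+\1_{\{|W_0|>a\}}$. The second summand has expectation $\P(|W_0|>a)\le Ca^{-\beta}=C\rho^\beta$. For the first summand, $\rho^\gamma\E[|W_0|^\gamma\1_{\{|W_0|\le a\}}]$, I would split the expectation over $\{|W_0|\le1\}$, which contributes a bounded constant since $\gamma>0$, and over $\{1<|W_0|\le a\}$; on the latter set the layer-cake identity together with the tail bound give a bound by $1+\gamma C\int_1^a x^{\gamma-1-\beta}\,dx$, and since $\gamma>\beta$ we have $\gamma-1-\beta>-1$, so this is $\le Ka^{\gamma-\beta}$. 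Hence $\rho^\gamma\E[|W_0|^\gamma\1_{\{|W_0|\le a\}}]\le K\rho^\gamma a^{\gamma-\beta}=K\rho^\beta$, and adding the two contributions completes the proof.

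I do not expect a genuine obstacle here. The two points needing care are the uniformity of all constants in $\rho\in(0,1]$ --- guaranteed because the substitution $a=1/\rho\ge1$ lands exactly in the regime where the polynomial tail bound $x^{-\beta}$ is the effective one --- and the use of the strict inequalities $\gamma<\beta$ in~(i) and $\gamma>\beta$ in~(ii), which are precisely what make the relevant integrals converge, respectively avoid a spurious logarithmic factor.
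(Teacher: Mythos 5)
Your proof is correct: the scaling $W\eqschw \rho W_0$, the layer-cake identities for the truncated moments, and the way $\gamma<\beta$ (resp.\ $\gamma>\beta$) enters to make $\int_a^\infty x^{\gamma-1-\beta}\,dx$ (resp.\ $\int_1^a x^{\gamma-1-\beta}\,dx$) behave are all sound, and the constants are uniform in $\rho\in(0,1]$ as claimed. The route differs slightly from the paper's: there the proof invokes Watanabe's density estimate $\eta(x)\leq K(1+|x|)^{-1-\beta}$ for the standard symmetric $\beta$-stable density and bounds $\E[|\rho W_0|^\gamma \1_{\{|\rho W_0|\geq 1\}}]$ (and the analogous small-value integral, plus the tail probability $\P(|W_0|\geq \rho^{-1})$ for part (ii)) by a direct change of variables in the density integral, whereas you use only the weaker and more classical input $\P(|W_0|>x)\leq C(1\wedge x^{-\beta})$ together with integration by parts. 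Your version therefore needs less about stable laws (no pointwise density bound), at the cost of slightly longer bookkeeping in (ii) where the region $\{|W_0|\leq 1\}$ must be handled separately and one uses $\rho^\gamma\leq\rho^\beta$; the paper's density-based computation is a two-line substitution but rests on the stronger Watanabe estimate, which it needs elsewhere anyway. Either argument fully establishes the lemma.
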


\begin{proof}[Proof of Lemma~\ref{sim-est-W}] Let  $\eta$ be the  density of a standard symmetric $\beta$-stable random variable. According to \cite[Theorem~1.1]{Wat}
we have that      $\eta(x)\leq	 K(1+|x|)^{-1-\beta}$, $x\in \R$. To prove (i) we use  substitution to get 
\begin{align}
 \E[ |  W |^\gamma\1_{\{| W |\geq 1\}}] = {}&\int_\R |\rho x|^\gamma \1_{\{ |\rho x|\geq 1\}}\,\eta(x)\,dx
\\ \leq {}& K \rho^{-1} \int_\R |x|^\gamma \1_{\{|x|\geq 1\}} |\rho^{-1} x|^{-1-\beta}\,dx \leq K \rho^\beta, 
\end{align}
where we use that $\gamma<\beta$ in the last inequality. 
To show (ii) we  note that  the assumption $\gamma>\beta$ implies that  
\begin{align}
\E[ |  W |^\gamma\1_{\{| W |\leq  1\}}] = {}&\int_\R |\rho x|^\gamma \1_{\{ |\rho x|\leq 1\}}\,\eta(x)\,dx
\\ \leq {}& K \rho^{-1} \int_\R |x|^\gamma \1_{\{|x|\leq 1\}} |\rho^{-1} x|^{-1-\beta}\,dx 
\leq  K \rho^\beta. \label{eq:72627} 
\end{align}
Moreover, if $W_0$ denotes symmetric $\beta$-stable random variable with scale parameter $1$ then 
\begin{align}
\E[ \1_{\{| W|\geq 1\}}] = \P(| W_0|\geq \rho^{-1} ) \leq K \rho^{\beta} ,  
\end{align}
which together with \eqref{eq:72627} completes the proof of \eqref{eq:8276y26}. 
\end{proof}

\begin{proof}[Proof of Lemma~\ref{est-H-inf}] 
For notation simplicity set  $U=\int_0^1 \kappa(s)\,dL_s$, $V=\int_0^1 \tau(s)\,dL_s$ and  $r_\epsilon= (k-\alpha )\beta+\epsilon$ for all $\epsilon>0$. To prove \eqref{eq:97236554340}  fix   $\delta>0$ and let $\rho\geq \delta$. According to  \eqref{est-H-inf-2} and Minkowski inequality we have that 
\begin{equation}\label{eq:7238}
\Big \| \Phi_\rho(U)- \Phi_\rho(V)\Big\|_{L^{r_\epsilon}} \\    \leq     K \Big(\Big\| | U-V|\1_{\{|U-V|<1\}}\Big\|_{L^{r_\epsilon}}
+ \Big\| | U-V |^p  \1_{\{|  U-V |\geq 1\}}\Big\|_{L^{r_\epsilon}}\Big),
\end{equation}
where $K=K_\delta$ is a finite constant only depending on $\delta$. 
To estimate the second term on the right-hand side of \eqref{eq:7238} we note that $p \beta (k-\alpha)< 2 p < \beta$ by our assumptions, and hence for all $\epsilon>0$ small enough we have that $p r_\epsilon < \beta$.   Therefore, according to   
Lemma~\ref{sim-est-W}(i),  we have  
\begin{equation}\label{eq:2srlqwer}
\Big\| | U-V |^p  \1_{\{|  U-V |\geq 1\}}\Big\|_{L^{r_\epsilon}} \leq 
K \| \kappa - \tau \|_{L^\beta([0,1])}^{\beta/r_\epsilon}  = 
K \| \kappa - \tau \|_{L^\beta([0,1])}^{\frac{1}{k-\alpha+\epsilon/\beta}}.
\end{equation}
To estimate the first term on the right-hand side of \eqref{eq:7238} we assume first that  $k-\alpha\geq 1$  which implies that   $r_\epsilon >\beta$ for all $\epsilon>0$,  and hence by  Lemma~\ref{sim-est-W}(i) 
\begin{equation}
\Big\| | U-V|\1_{\{|U-V|<1\}}\Big\|_{L^{r_\epsilon}}\leq K \| \kappa - \tau\|_{L^\beta([0,1])}^{\beta/r_\epsilon} =    K \| \kappa - \tau\|_{L^\beta([0,1])}^{\frac{1}{k-\alpha+\epsilon/\beta}}.
\end{equation}
On the other hand, if  $k-\alpha<1$ then $r_\epsilon<\beta$ for all $\epsilon>0$ close enough to 0  which implies that 
\begin{equation}
\Big\| | U-V|\1_{\{|U-V|<1\}}\Big\|_{L^{r_\epsilon}} \leq \|  U-V\|_{L^{r_\epsilon}}\leq K \| \kappa - \tau\|_{L^\beta([0,1])}, 
\end{equation} 
and  completes the proof of \eqref{eq:97236554340}. 

To prove \eqref{est-234l1j32lh3} we are applying  \eqref{est-H-inf-2} to get  
\begin{align}
{}&  \Big \| \Phi_\rho(U)- \Phi_\rho(V)\Big\|_{L^{1}} \\  {}&\quad    \leq     K \Big(\Big\| \Big( |U|\wedge 1+ |V|\wedge 1\Big) | U-V|\1_{\{|U-V|<1\}}\Big\|_{L^{1}}
+ \Big\| | U-V |^p  \1_{\{|  U-V |\geq 1\}}\Big\|_{L^{1}}\Big).\qquad \label{eq:2038203}
\end{align}
By using that $p<\beta$ we have by Lemma~\ref{sim-est-W}(i)  
\begin{equation}
\Big\| | U-V |^p  \1_{\{|  U-V |\geq 1\}}\Big\|_{L^{1}} \leq K \| \kappa -\tau \|_{L^\beta([0,1])}^\beta. 
\end{equation}
Suppose first that $\beta>1$. To estimate the  first term in \eqref{eq:2038203} we let $r\in (1,\beta)$ and $q=r/(r-1)$ denote the conjugated number to $r$.  By H\"older's inequality we have 
\begin{align}
{}& \Big\| \Big( |U|\wedge 1+ |V|\wedge 1\Big) | U-V|\1_{\{|U-V|<1\}}\Big\|_{L^{1}} \\ 
{}& \qquad \leq \Big( \| |U|\wedge 1 \|_{L^q}+ \| |V|\wedge 1 \|_{L^q}\Big)\Big\| | U-V|\1_{\{|U-V|<1\}}\Big\|_{L^{r}}
\\ {}& \qquad \leq K \Big( \| \kappa  \|_{L^\beta([0,1])}^{\beta/q}+ \|\tau \|_{L^\beta([0,1])}^{\beta/q}\Big) \| \kappa - \tau \|_{L^\beta([0,1])},\label{eqerqwe}
\end{align}
where we have used Lemma~\ref{sim-est-W}(ii) and  $r<\beta<q$  in the second inequality. By \eqref{eqerqwe} we obtain \eqref{est-234l1j32lh3} by choosing $r$ close enough to $\beta$. For  $\beta\leq 1$ and all $\tilde \epsilon>0$  the first term in \eqref{eq:2038203}  is less than or equal to 
\begin{equation}
2 \E[ | U-V|\1_{\{|U-V|\leq 1\}}]\leq 2 \E[ | U-V|^{1+\tilde \epsilon}\1_{\{|U-V|\leq 1\}}]^{1/(1+\tilde\epsilon)}\leq K  \| \kappa - \tau\|_{L^\beta([0,1])}^{\beta/(1+\tilde \epsilon)}
\end{equation}
where we have used Lemma~\ref{sim-est-W}(ii) in the last inequality.   Hence 
choosing $\tilde \epsilon$ small enough  yields \eqref{est-234l1j32lh3}. 
\end{proof}

To prove Theorem~\ref{sec-order}(i) we  use \eqref{statdec} to obtain the decomposition 
\begin{align}
{}& n^{1-\frac{1}{(k-\alpha)\beta}}\Big(n^{-1+p(\alpha + 1/\beta)}V(p;k)_n- m_p\Big)\\ {}& \qquad 
 \eqschw  n^{\frac{1}{(\alpha-k)\beta}}S_n + n^{1-\frac{1}{(k-\alpha)\beta}}\Big( \frac{n-k+1}{n} \E[ |Y^{n}_1|^p]-m_p\Big).
 \label{eq:decom77272}
\end{align}
First we will prove that 
\begin{equation}\label{eq:78368222}
 n^{\frac{1}{(\alpha-k)\beta}}S_n\schw S\qquad  \text{as } n\to \infty,
\end{equation}
where the random variable $S$ is defined in Theorem~\ref{sec-order}(i). Afterwards we show that the second term on the right-hand side of \eqref{eq:decom77272} converges to zero.  To show \eqref{eq:78368222}   we will use the  decomposition \eqref{eq:7836638-22}, which shows that it suffices to prove that 
\begin{align}\label{eq:827284-1}
{}& n^{\frac{1}{(\alpha-k)\beta}}  \sum_{r=k}^n R^{n,0}_r  \toop 0,\qquad
 n^{\frac{1}{(\alpha-k)\beta}} \sum_{r=k}^n \Big(Q^{n,0}_r -Z_r\Big)\toop 0,
\\  \label{eq"213213123} {}& n^{\frac{1}{(\alpha-k)\beta}}\sum_{r=k}^n  Z_r\schw S\quad 
\end{align}
as $n\to \infty$. 
According to \eqref{eq:112132-1} of Proposition~\ref{prop-key-est} we have that 
\begin{equation}
 \E\Big[ \Big( n^{\frac{1}{(\alpha-k)\beta}}  \sum_{r=k}^n R^{n,0}_r \Big)^2\Big] \leq K \Big( n^{\frac{2}{(\alpha-k)\beta}+1}
+ n^{2(\frac{1}{(\alpha-k)\beta}+(\alpha-k)\beta +2)}+n^{\frac{2}{(\alpha-k)\beta}}\log(n)\Big)\to 0
\end{equation}
as $n\to \infty$, where we have used  the inequality $2<x+1/x$ for all $x>1$ and the fact that $(k-\alpha)\beta>1$ by assumption. 
Furthermore, for all $\epsilon>0$ we have according to  \eqref{eq:112132-2} of Proposition~\ref{prop-key-est} and the assumption $\alpha>k-2/\beta$  that as $n\to \infty$
\begin{equation}
\E\Big[ \Big|n^{\frac{1}{(\alpha-k)\beta}}\sum_{r=k}^n \Big( Q^{n,0}_r-Z_r\Big)\Big|\Big]\leq K \Big( n^{\frac{1}{(\alpha-k)\beta}+(\alpha-k)\beta +2+\epsilon}+n^{\frac{1}{(\alpha-k)\beta}-\beta+1+\epsilon}\Big)\to 0\label{eq:2323qew}
\end{equation}
 for all $\epsilon$ close enough to zero. The first term on the right-hand side of \eqref{eq:2323qew} converge to zero  by the inequality $2<x+1/x$ for all $x>1$ and the assumption $(k-\alpha)\beta>1$. Convergence of the  second term on the right-hand side of \eqref{eq:2323qew} to zero is equivalent to $\alpha>k-\frac{1}{\beta(1-\beta)}$. But this is satisfied by the  assumption $\alpha<k-2/\beta$ for $\beta\geq 1/2$ and by explicit assumption for $\beta<1/2$.

%
%
%

In the following we will show the last statement of \eqref{eq:827284-1}. 
Since $(Z_r)_{r\geq k}$ are i.i.d.\ with mean zero it is enough  to show that   
\begin{equation}\label{eq:93673}
\lim_{x\to\infty}x^{(k-\alpha)\beta}\P( Z>x)= \gamma \qquad \text{and}\qquad \lim_{x\to\infty}x^{(k-\alpha)\beta} \P(Z<-x)=0
\end{equation}
with $Z:=Z_k$, cf.\  
 \cite[Theorem~1.8.1]{SamTaq}. The 
constant $\gamma$ is defined in \eqref{eq:24245} below. 
To show \eqref{eq:93673} 
let us define the function $\overline{\Phi}:\R\to \R_+$ via
\[
\overline{\Phi}(x):= \sum_{j=1}^{\infty} \Phi_{\rho_j^\infty} (\phi^\infty_j(0) x).
\]
Note that \eqref{rep-H-est-2} implies that $\Phi_{\rho_j^\infty} (x)\geq 0$ and hence $\overline \Phi$ is positive. Note that  $\rho_j^\infty\to \rho_\infty^\infty:= \| h_k \|_{L^\beta(\R)}>0$ which implies that  $(\rho_j^\infty)_{j\geq 1}$ is bounded away from 0, and hence by by \eqref{est-H-beta}  and for $l\in (p,\beta)$ with $(\alpha-k)l <-1$ we have 
\begin{equation}\label{eq:232}
|\overline{\Phi}(x)|\leq K |x|^l \sum_{j=1}^{\infty} \phi^\infty_j(0)^l \leq K |x|^l \sum_{j=1}^{\infty} j^{l(\alpha -k)}<\infty, 
\end{equation}
 which shows that $\overline \Phi$ is well-defined. 
 Eq.~\eqref{eq:232} shows moreover that $\E[ \overline \Phi(L_{k+1}-L_k)]<\infty$, and hence we can define a random variable $Q$ via 
 \begin{align}
Q ={}&  \overline \Phi(L_{k+1}-L_k)- \E[ \overline \Phi(L_{k+1}-L_k)] \\ ={}&  \sum_{j=1}^{\infty} \Big(\Phi_{\rho_j^\infty} \Big(\phi^\infty_j(0) (L_{k+1}-L_k) \Big)- \E\Big[ \Phi_{ \rho_j^\infty}\Big(\phi^\infty_j(0) (L_{k+1}-L_k)\Big)\Big]\Big),
 \end{align}
 where the last sum converges absolutely almost surely.
Since 
 $Q\geq - \E[ \overline \Phi(L_{k+1}-L_k)] $, we have that 
 \begin{equation}\label{eq:836b}
 \lim_{x\to\infty}x^{(k-\alpha)\beta} \P(Q<-x)=0. 
 \end{equation}
By the substitution $t=(x/u)^{1/(k-\alpha)}$ we have that 
\begin{align}
x^{1/(\alpha-k)} \overline \Phi(x)= {}& x^{1/(\alpha-k)} \int_0^\infty \Phi_{ \rho^\infty_{1+[t]}}(\phi^\infty_{1+[t]} (0)x)\,dt
\\ = {}& (k-\alpha)^{-1}  \int_0^\infty \Phi_{\rho^\infty_{1+[( x  / u)^{1/(k-\alpha)}]}}(\phi^\infty_{1+[( x  / u)^{1/(k-\alpha)}]}(0) x)u^{-1+1/(\alpha-k)}\,du \\ \label{eq:7268712}
\to {}& (k-\alpha)^{-1} \int_0^\infty \Phi_{\rho_\infty^\infty}(k_\alpha u)u^{-1+1/(\alpha-k)}\,du=:\kappa\qquad \text{as }x\to \infty,
\end{align}
where $k_\alpha = \alpha (\alpha-1)(\alpha-2)\cdots (\alpha-k+1)$. 
Here we have used that $(\rho_j^\infty)_{j\geq 1}$ are bounded away from zero together with the estimate  \eqref{hestimate} on $\Phi_{\rho_j^\infty}$ and  Lebesgue's dominated convergence theorem. Note that the constant $\kappa$ defined in \eqref{eq:7268712} coincides with the $\kappa$ defined  in 
Remark~\ref{rem-const}. The connection between the tail behaviour 
of a symmetric $\rho$-stable random variable $S_{\rho}$, $\rho\in (1,2)$,  and its  scale parameter $\bar\sigma$ is given via 
\[
\mathbb{P}(S_{\rho}>x) \sim  \tau_{\rho} \bar\sigma^{\rho} x^{-\rho} /2\qquad \text{as } x\rightarrow \infty, 
\]  
where the function $\tau_{\rho}$ has been defined in \eqref{def-tau-rho}
(see e.g.\ \cite[Eq.~(1.2.10)]{SamTaq}). Hence,  $\mathbb P (|L_{k+1}-L_k|>x)\sim \tau_{\beta}  x^{-\beta}$ as $x\rightarrow \infty$, and by \eqref{eq:7268712} we readily deduce  that as $x\to \infty$
\begin{equation}\label{eq:24245}
\mathbb P(Q>x)\sim \gamma x^{(k-\alpha)\beta} \quad \text{with} \quad \gamma = 
\tau_{\beta}  \kappa ^{(k-\alpha )\beta }.
\end{equation}
Next we will show that 
for some $r>(k-\alpha)\beta$ we have 
\begin{equation}\label{eq:434}
\mathbb P(| Z - Q|>x)\leq Kx^{-r} \qquad \text{for  all } x\geq 1,
\end{equation}
which implies  \eqref{eq:93673}, cf.\ \eqref{eq:836b} and \eqref{eq:24245}.
To show \eqref{eq:434}  it is sufficient to find  $r>(k-\alpha)\beta$ such that 
\[
\E[| Z - Q|^r]<\infty  
\]
by Markov's inequality. Furthermore,   by Minkowski inequality and the definitions of $Q$ and $Z$  it suffices to show that 
\begin{equation}
 \label{eq:6739}
\sum_{j=1}^{\infty} \Big\| \Phi_{\rho_j^\infty}(U_{j,k}^\infty )
- \Phi_{\rho_j^\infty}\Big(  \phi^\infty_j(0) (L_{k+1}-L_k)\Big)\Big\|_{L^r}<\infty  
\end{equation}
(recall  that $(k-\alpha)\beta>1$).
To show \eqref{eq:6739} we note that for all $x\in [0,1]$ and $j\in \N$ there exists $\theta_{j,x}\in [j,j+x]$  such that  
\begin{equation}\label{eq:2390723}
 |  \phi_j^\infty(x) - \phi^\infty_j(0)| = | h_k(j+x) - h_k(j)| \leq |h_k'(\theta_{j,x})| \leq K j^{\alpha-k-1}. 
\end{equation}
Choose $\delta>0$ according to Lemma~\ref{est-H-inf} and let $r_\epsilon=(k-\alpha)\beta+\epsilon$ for all  $\epsilon\in (0,\delta)$. 
By Lemma~\ref{est-H-inf} and \eqref{eq:2390723} we have that 
\begin{align}
 {}& \Big\| \Phi_{\rho_j^\infty}( U_{j,k}^\infty )
- \Phi_{\rho_j^\infty}\Big(  \phi^\infty_j(0) (L_{k+1}-L_k)\Big)\Big\|_{L^{r_\epsilon}} \\ {}& \leq  K \Big( \| \phi_j^\infty-\phi^\infty_j(0)\|_{L^{\beta}([0, 1])}+ \| \phi_j^\infty-\phi^\infty_j(0)\|_{L^{\beta}([0, 1])}^{\frac{1}{k-\alpha+\epsilon/\beta}}\Big)
\leq   K \Big( j^{\alpha-k-1}+  j^{\frac{\alpha-k-1}{k-\alpha+\epsilon/\beta}}\Big).\qquad \label{eq:83767473}
\end{align}
Our assumption $\alpha<k-1/\beta$ implies that $\alpha-k<0$. Furthermore, since 
\begin{equation}
 \frac{\alpha-k-1}{k-\alpha+\epsilon/\beta}\to -1-1/(k-\alpha)<-1\qquad \text{as }\epsilon\to 0, 
\end{equation}
 we may, according to \eqref{eq:83767473},   choose $\epsilon>0$ such that \eqref{eq:6739} holds for $r=r_\epsilon$ which satisfies the condition $r>(k-\alpha)\beta$. This completes the proof of \eqref{eq:434} and hence also of 
 \eqref{eq:827284-1}.

%

To complete the proof of Theorem~\ref{sec-order}(i)  we  show that the second term in \eqref{eq:decom77272} converges to zero. For this purpose it is enough to show  that 
\begin{equation}\label{eq:7823927}
 n^{1-\frac{1}{(k-\alpha)\beta}} \Big( \E[| Y^n_1 |^p] -m_p\Big)\to 0\qquad \text{as }n\to\infty,  
\end{equation}
since $1-\frac{1}{(k-\alpha)\beta}<1$. 
Recall that $m_p= \| h_k \|_{L^\beta(\R)}^p \E[ |Z|^p]$, where $Z$ is a standard symmetric $\beta$-stable random variable and $ \| h_k \|_{L^\beta(\R)}= \|\phi^\infty_1\|_{L^\beta(\R)}$. 
 By   Lemma~\ref{helplem1} we have that 
 \begin{equation}\label{eq:873663}
 \Big| \| \phi^n_j \|_{L^\beta(\R)}^\beta - \| \phi^\infty_j \|_{L^\beta(\R)}^\beta \Big|  \leq K n^{(\alpha-k)\beta+1}\to 0, 
  \end{equation}
where the  convergence to zero is due to the fact that $(k-\alpha)\beta>1$ under our assumptions. Since the function $x\mapsto x^{p/\beta}$ is continuously differentiable on $(0,\infty)$ and $\| h_k \|_{L^\beta(\R)}^\beta>0$, it follows by the mean value theorem that 
\begin{equation}
 \Big| \| \phi^n_1 \|_{L^\beta(\R)}^p - \| h_k \|_{L^\beta(\R)}^p \Big| \leq K \Big| \| \phi^n_1 \|_{L^\beta(\R)}^\beta - \| h_k \|_{L^\beta(\R)}^\beta \Big|, 
\end{equation}
which together with \eqref{eq:873663} and the definition of $Y^n_1$ in \eqref{eq:23412343} shows that   
  \begin{align}
 n^{1-\frac{1}{(k-\alpha)\beta}}   \Big| \E[| Y^n_1 |^p] -m_p\Big| = {}& n^{1-\frac{1}{(k-\alpha)\beta}}   \E[ |Z|^p]
 \Big|\| \phi^n_1\|_{L^\beta(\R)}^p - \| h_k \|_{L^\beta(\R)}^p\Big|\\
 \leq   {}& K n^{1-\frac{1}{(k-\alpha)\beta}}  
 \Big|\| \phi^n_1\|_{L^\beta(\R)}^\beta - \| h_k \|_{L^\beta(\R)}^\beta\Big| 
 \\  \leq   {}& K  n^{1-\frac{1}{(k-\alpha)\beta}+1-(k-\alpha)\beta} = K  n^{2-\frac{1}{(k-\alpha)\beta}-(k-\alpha)\beta}. \label{eq:023723}
 \end{align}
By   \eqref{eq:023723} and the assumption $(k-\alpha)\beta>1$ we obtain \eqref{eq:7823927}, and the proof of 
Theorem~\ref{sec-order}(i) is complete.
\qed
%

%
%
%
%
%

%
%

\subsection{Proof of Theorem~\ref{sec-order}(ii)}
To prove Theorem~\ref{sec-order}(ii) we start by noticing that 
\begin{align}\label{eq:97136-4}
\sqrt{n} \Big( n^{-1+p(\alpha + 1/\beta)}V(p;k)_n - m_p \Big) \eqschw \frac{1}{\sqrt{n}} S_n + 
\sqrt{n} \Big( \frac{n-k+1}{n}\E[ | Y^{n}_1|^p]-m_p\Big)\qquad 
\end{align}
due to  \eqref{statdec}. 
First we will show that 
\begin{equation}\label{eq:076388452}
 \frac{1}{\sqrt{n}} S_n \schw \mathcal N(0,\eta^2) \qquad \text{as } n\rightarrow \infty, 
\end{equation}
where $\eta^2\in (0,\infty)$ is given in Theorem~\ref{sec-order}(ii).  Afterwards we will show that the second term on the right-hand side of \eqref{eq:97136-4} converges to zero, which will complete the proof of Theorem~\ref{sec-order}(ii).   To prove \eqref{eq:076388452} it is according to a standard result (see e.g.\ \cite[Theorem~3.2]{Billingsley})  enough to show the following (i)--(iii): 

\noindent
(i): We have that 
\begin{equation}\label{eq:189362720}
\lim_{m\to \infty} \limsup_{n\to \infty} \big(n^{-1} \E[(S_{n}-S_{n,m})^2]\big)  = 0.
\end{equation}
 \noindent
(ii): For all $m\geq 1$ there exists  $\eta^2_m\in [0,\infty)$ such that 
 \begin{equation}\label{eq:8272627282}
 \frac{1}{\sqrt{n}} S_{n,m} \schw \mathcal N(0,\eta^2_m) \qquad \text{as } n\rightarrow \infty. 
 \end{equation}

\noindent
 (iii): We have that 
  \begin{equation}
  \eta_m^2\to \eta^2 \qquad \text{as }m\to \infty. 
  \end{equation}
 
\noeqref{eq:112132-3-99}
To prove (i) we use 
Proposition~\ref{prop-key-est} and the assumption $\alpha<k-2/\beta$ to obtain that 
\begin{align}\label{eq:8287320973}
{}& \frac{1}{n}\E\Big[ \Big(\sum_{r=k}^n R^{n,m}_r\Big)^2\Big]\leq K \Big(  (m+1)^{(\alpha-k)\beta/4+1/2} + n^{2(\alpha-k)\beta +3}+n^{-1}\log n\Big),   \\
\label{eq:112132-3-99}
 {}& \frac{1}{n}\E\Big[ \Big(\sum_{r=k}^n Q^{n,m}_r\Big)^2\Big]\leq   K\Big(   n^{ (\alpha-k)\beta+2+\epsilon}   +  (m+1)^{(\alpha-k)\beta +2+\epsilon}+n^{-1} \Big).
\end{align}
Thus, by the  decomposition \eqref{eq:7836638-22} of $S_n - S_{n,m}$, \eqref{eq:8287320973},\eqref{eq:112132-3-99} and the assumption $\alpha<k-2/\beta$ we deduce \eqref{eq:076388452}, which  completes the proof of (i).

To prove (ii)   we note that for fixed $n,m\geq 1$, $\{|Y_i^{n,m}|^p\!: i=k,\dots,n\}$ is a stationary  $m$-dependent sequence, and hence 
\begin{align}\label{eq:72727-1}
n^{-1}\text{var}(S_{n,m}) = n^{-1}(n-k)\theta^{n,m}_0 + 2n^{-1} \sum_{i=1}^{m}(n-k-i) \theta^{n,m}_i
\end{align}   
where we set  $\theta^{n,m}_i= \text{cov}(|Y^{n,m}_k|^p,|Y_{k+i}^{n,m}|^p)$ for all $n\in \N\cup\{\infty\}$, $m,i\geq 1$.
By the  symmetrisation  inequality we have for all $u>0$ that 
\[
\P(  | Y^{n,m}_i - Y^{\infty,m}_i|>u)\leq 2 \P( | Y^{n}_i - Y^{\infty}_i|>u), 
\] 
where the quantities $Y^{n}_i$ and $Y^{\infty}_i$ have been introduced in \eqref{eq:232452}. By  the  equivalence of moments of  stable random variables we have 
for  all $q<\beta$  that 
\begin{equation}\label{eq:297293723}
\E[ | Y^{n,m}_i - Y^{\infty,m}_i|^q] \leq K_q  \E[ | Y^{n,m}_i - Y^{\infty,m}_i|^p]^{q/p}\leq 
K_q 2^{q/p}  \E[ | Y^{n}_k - Y^{\infty}_k|^p]^{q/p}\to 0
\end{equation}
as $n\to \infty$, where the convergence to zero follows by \eqref{eq:12380273}. Since $p<\beta/2$, \eqref{eq:297293723} implies that $\theta^{n,m}_i\to \theta^{\infty,m}_i$ as $n\to \infty$,  
%
%
%
%
%
%
%
%
and  by  \eqref{eq:72727-1} we deduce  that 
\begin{equation}\label{eq:76726-1}
 n^{-1}\text{var}(S_{n,m}) \to \theta^{\infty,m}_0 + 2\sum_{i=1}^{m} \theta^{\infty,m}_i=:\eta^2_m\qquad \text{as }n\to \infty. 
\end{equation} 
By \eqref{eq:297293723}, \eqref{eq:76726-1}, and since for all $n\geq 1$, the sequences $\{|Y_i^{n,m}|^p: i=k,\dots,n\}$ are  $m$-dependent,  the convergence \eqref{eq:8272627282} follows by the main theorem of \cite{Berk}, and the proof of (ii) is complete. 


The proof of (iii)  follows by the same arguments as in \cite[p.~1650]{hh97}. Indeed, for all $m,j\geq 1$ we have by  the triangle inequality  that 
\begin{align}
\big |  | \eta_m|-|\eta_j|\big | {}& =  \lim_{n\to \infty} \Big( n^{-1/2} \Big|  \| S_{n,m} \|_{L^2} -   \| S_{n,j} \|_{L^2} \Big| \Big)   \leq
\limsup_{n\to \infty} \Big( n^{-1/2}  \| S_{n,m} - S_{n,j} \|_{L^2} \Big) \\ {}& 
\leq \limsup_{n\to \infty} \Big( n^{-1/2}  \| S_{n,m}- S_{n}\|_{L^2}\Big) + \limsup_{n\to \infty} \Big( n^{-1/2}  \| S_n- S_{n,j}\|_{L^2}\Big),
\end{align}
which according to \eqref{eq:189362720} shows that  $(|\eta_m|)_{m\geq 1}$ is a Cauchy sequence in $\R_+$. Hence, $(\eta_m^2)_{m\geq 1}$ is convergent. 

To show that the second term on the right-hand side of \eqref{eq:97136-4} converges to zero it suffices to prove that 
\begin{equation}\label{eq:239723}
\sqrt{n} \Big(\E[ | Y^{n}_1|^p]-m_p\Big)\to 0 \qquad \text{as }n\to \infty. 
\end{equation}
%
%
%
%
%
%
By  Lemma~\ref{helplem1} we have that 
 \begin{equation}
 \Big| \| \phi^n_1 \|_{L^\beta(\R)}^\beta - \| \phi^\infty_1 \|_{L^\beta(\R)}^\beta \Big| \leq  K n^{-1}\to 0. \label{eq:873663-33}
  \end{equation}
Since the function $x\mapsto x^{p/\beta}$ is continuously differentiable on $(0,\infty)$ and $\| \phi^\infty_1 \|_{L^\beta(\R)}^\beta>0$ it follows by the mean-value theorem that 
\begin{equation}
 \Big| \| \phi^n_1 \|_{L^\beta(\R)}^p - \| \phi^\infty_1 \|_{L^\beta(\R)}^p \Big| \leq K \Big| \| \phi^n_1 \|_{L^\beta(\R)}^\beta - \| h_k \|_{L^\beta(\R)}^\beta \Big|.
\end{equation}
Together with \eqref{eq:873663-33} and the definition of $Y^n_1$ in \eqref{eq:23412343} it shows that   
  \begin{align}
{}&\sqrt{ n}   \Big| \E[| Y^n_1 |^p] -m_p\Big| =  \sqrt{ n}  \E[ |Z|^p]
 \Big|\| \phi^n_1\|_{L^\beta(\R)}^p - \| \phi^\infty_1 \|_{L^\beta(\R)}^p\Big|\\
{}&\qquad  \leq    K \sqrt{ n}  
 \Big|\| \phi^n_1\|_{L^\beta(\R)}^\beta - \| \phi^\infty_1 \|_{L^\beta(\R)}^\beta\Big| 
  \leq    K  n^{-1/2}\to 0 \label{eq:023723-33}
 \end{align}
 as $n\to \infty$.  Eq.~\eqref{eq:023723-33} shows \eqref{eq:239723} and  completes the proof of Theorem~\ref{sec-order}(ii). 
\qed

\subsection{An estimate}\label{proof-pro-key}

This subsection is devoted to proving the following lemma, which is used in the proof of \eqref{eq:112132-1} of Proposition~\ref{prop-key-est}. 
\begin{lem}\label{eq:8937739}
Let $ \zeta_{r,j}^{n,m}$ be defined in \eqref{eq-S-S'-2}.  Then there exists a finite constant $K$ such that for all $n\geq 1$, $r=k,\dots,n$, $m\geq 0$ and $j\geq 1$ we have 
 \begin{equation}
 \E[ |  \zeta_{r,j}^{n,m}|^2] \leq K 
 \begin{cases} (m+1)^{(\alpha-k)\beta +1} j^{(\alpha-k)\beta}\qquad  &  j= 1, \dots, m \\ 
 j^{2(\alpha-k)\beta+1} & j>m.
 \end{cases}
 \end{equation} 
\end{lem}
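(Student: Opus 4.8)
The plan is to estimate the $L^2$-norm of $\zeta_{r,j}^{n,m}$ by exploiting the specific conditional-expectation structure of \eqref{eq-S-S'-2} together with the two key facts established in the preliminary subsection: the mixing-type estimate $\|\phi^n_j\|_{L^\beta([0,1])}\le Kj^{\alpha-k}$ from \eqref{eq:72138}, and the pointwise bounds \eqref{est-H-28217}--\eqref{hestimate} on $\Phi_\rho$ and its derivatives. The starting point will be to observe that $V_r^{n,m}$ depends on the increments of $L$ only through the interval $(-\infty,r]$ (actually through $[r-m,r]$ and $(-\infty,r]$), and that the three conditional expectations appearing in $\zeta_{r,j}^{n,m}$ differ only through the information contained in the single unit interval $[r-j,r-j+1]$. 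Concretely, I would use the representation \eqref{eq:cal-Vsdfs}--\eqref{eq:1212124432} which shows $\E[V_r^{n,m}\mid\g^1_{r-j}]=\Phi_{\rho^n_j}(U^n_{r,r-j})-\Phi_{\rho^{n,m}_j}(U^n_{r,r-j})\mathbbm 1_{\{j\le m\}}-(\text{mean})$, and an analogous (telescoping) identity for the difference $\E[V_r^{n,m}\mid\g_{r-j+1}]-\E[V_r^{n,m}\mid\g_{r-j}]$.

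The main step is a case split according to whether $j\le m$ or $j>m$. For $j>m$, the truncation at level $m$ is irrelevant for the piece of $Y_r^n$ that "sees" the interval $[r-j,r-j+1]$, so one expects heavy cancellation: the quantity $\zeta_{r,j}^{n,m}$ is essentially a second-order increment (a martingale-difference-type object) built from $\Phi_{\rho^n_j}$ evaluated at the small random variable $U^n_{r,r-j}=\int_{r-j}^{r-j+1}\phi^n_r(u)\,dL_u$, whose scale is $\|\phi^n_r\|_{L^\beta([r-j,r-j+1])}=\|\phi^n_j\|_{L^\beta([0,1])}\le Kj^{\alpha-k}$ by stationarity of increments and \eqref{eq:72138}. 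Using $|\Phi_\rho(x)|\le K(|x|^p\wedge|x|^2)$ from \eqref{hestimate} and the moment equivalence for stable integrals (together with a subtraction of the mean, which gives one further factor), one should get $\E[|\zeta_{r,j}^{n,m}|^2]\le K\|\phi^n_j\|_{L^\beta([0,1])}^{2\gamma}$ for a suitable $\gamma$ slightly below $\beta$, and in fact — because the subtraction of the two conditional expectations over adjacent $\sigma$-algebras removes the "size" of $U^n_{r,r-j}$ once more — one can push the exponent up to $\beta$ on one factor, yielding $\le Kj^{2(\alpha-k)\beta+1}$ after summing the $L^\beta$-mass over the interval. Here I would use \eqref{est-H-beta} ($|\Phi_\rho(x)|\le K|x|^l$ for $l\in(p,\beta)$) to get the extra power of $|U|$, and a Burkholder/Rosenthal-type argument or direct computation of the stable characteristic function for the second moment after centering. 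For $j\le m$, the truncated and untruncated versions genuinely differ, so there is less cancellation; one only gets $\E[|\zeta_{r,j}^{n,m}|^2]\le K\|\phi^n_j\|_{L^\beta}^{2}\cdot(\text{mass of }\phi^n_{\text{beyond }m})$, and estimating the tail mass $\int_{[j,\infty)\setminus\cdots}|\phi^n_\cdot|^\beta$ by $\sum_{l\ge m}l^{(\alpha-k)\beta}\le K(m+1)^{(\alpha-k)\beta+1}$ gives the bound $(m+1)^{(\alpha-k)\beta+1}j^{(\alpha-k)\beta}$.

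I expect the main obstacle to be the bookkeeping needed to extract the \emph{two} separate gains of $\beta$-mass in the $j>m$ case: one gain comes from centering ($V_r^{n,m}$ has mean zero), and a second, more delicate, gain comes from the fact that $\zeta_{r,j}^{n,m}$ is a difference of consecutive conditional expectations so that only the \emph{fluctuation} of $\Phi_{\rho^n_j}(U^n_{r,r-j})$ due to the interval $[r-j,r-j+1]$ survives — this requires rewriting $\E[\cdot\mid\g_{r-j+1}]-\E[\cdot\mid\g_{r-j}]-\E[\cdot\mid\g^1_{r-j}]$ carefully, using independence of the $L$-increments over disjoint intervals, a first-order Taylor expansion of $\Phi_\rho$ (legitimate since $\Phi_\rho\in C^3$ with bounded derivatives by \eqref{est-H-28217}), and then bounding the remainder. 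Once that structure is in place, the rest is an application of \eqref{hestimate}, \eqref{est-H-beta}, the moment equivalence for symmetric $\beta$-stable stochastic integrals, and the elementary series bound $\sum_{l\ge j}l^{(\alpha-k)\beta}\le Kj^{(\alpha-k)\beta+1}$, valid because $(\alpha-k)\beta<-1$ under the hypotheses of Theorem~\ref{sec-order}. I would conclude by assembling the two cases into the stated piecewise bound.
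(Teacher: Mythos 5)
You have the right skeleton --- the split at $j\le m$ versus $j>m$, the factor $j^{(\alpha-k)\beta}$ coming from the $\beta$-mass of $\phi^n_j$ over the unit interval at lag $j$ (via \eqref{eq:72138}, \eqref{est-H-beta} and the stable moment bounds), and the identification of the second factor with the tail masses $\sum_{l\ge m}l^{(\alpha-k)\beta}$ resp.\ $\sum_{l\ge j}l^{(\alpha-k)\beta}$ --- and these are indeed the ingredients the paper uses. But the decisive step, extracting that second factor at the level of \emph{second} moments, is exactly what you defer to ``bookkeeping'', and the tools you name for it would fail. A first-order Taylor expansion of $\Phi_\rho$ with bounded derivatives (your appeal to \eqref{est-H-28217}) produces raw increments such as $|U^n_{r,r-j}-\tilde U^n_{r,r-j}|$ or the increment of the far past; these are symmetric $\beta$-stable with $\beta<2$, so their squares are not integrable and the ``remainder'' cannot be put into $L^2$. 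One needs difference estimates that are linear only for small arguments and grow like the $p$-th power for large ones, uniformly in the shift --- precisely the content of the paper's Lemma~\ref{lemma-est-phi''} (iterated integrals of $\Phi''_\rho$, $\Phi'''_\rho$ bounded by products of $(x\wedge 1)$- and $x^p$-type factors), used together with Lemma~\ref{sim-est-W}. Moreover, attributing one gain to ``centering'' is not the right mechanism: subtracting the mean gives no quantitative gain by itself; what makes $\zeta^{n,m}_{r,j}$ small in both directions is that it is a doubly (for $j>m$) resp.\ triply (for $j\le m$, with the truncation tail beyond $m$ as third coordinate) degenerate quantity, i.e.\ an exact second/third order difference of $\Phi_\rho$ in the independent contributions of the unit interval at lag $j$, the far past, and the tail.

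The paper converts this degeneracy into the stated bound by a further telescoping of the past into unit blocks, $\zeta^{n,m}_{r,j}=\sum_{l\ge j}\vartheta^{n,m}_{r,j,l}$ in \eqref{def-theta-23}; the $\vartheta^{n,m}_{r,j,l}$ are martingale differences in $l$, so $\E[|\zeta^{n,m}_{r,j}|^2]=\sum_{l\ge j}\E[|\vartheta^{n,m}_{r,j,l}|^2]$, and Lemma~\ref{lem-est-theta} (whose proof writes each $\vartheta$ as a coupled second/third difference over independent copies $\tilde U^n_{1,1-j},\tilde U^n_{1,-l}$ and applies Lemma~\ref{lemma-est-phi''}) gives $\E[|\vartheta^{n,m}_{r,j,l}|^2]\le K\,j^{(\alpha-k)\beta}l^{(\alpha-k)\beta}$, with the extra factor $(m+1)^{(\alpha-k)\beta+1}$ when $l<m$; summing over $l\ge j$ then yields exactly your two cases. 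Note also that taking $\gamma=2$ there is only admissible because $p<\beta/2$, which your outline should invoke explicitly. Without either this orthogonal block decomposition or an equally precise coupled multi-difference representation of the whole past combined with the truncated $\Phi''/\Phi'''$ estimates, your argument stops at the naive bound $\E\big[|\E[V^{n,m}_r\mid \g^1_{r-j}]|^2\big]\le K j^{(\alpha-k)\beta}$ and never reaches the exponent $2(\alpha-k)\beta+1$ (nor the $(m+1)^{(\alpha-k)\beta+1}$ factor); so as written there is a genuine gap at the heart of the proof.
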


To show Lemma~\ref{eq:8937739} we  will use the following  telescoping sum decomposition of $\zeta^{n,m}_{r,j}$: 
\begin{equation}\label{def-theta-23}
 \zeta_{r,j}^{n,m} = \sum_{l=j}^\infty \vartheta_{r,j,l}^{n,m}  ,\qquad \quad \vartheta_{r,j,l}^{n,m}  := \E[\zeta_{r,j}^{n,m}  | \g_{r-j}^1\vee \g_{r-l}] 
 - \E[\zeta_{r,j}^{n,m}  | \g_{r-j}^1\vee \g_{r-l-1}].
\end{equation}
The series \eqref{def-theta-23} converges almost surely and the representation follows from  the fact that  $\lim_{l\to \infty} \E[ \zeta^{n,m}_{r,j}|\g^1_{r-j}\vee \g_{r-l}]=\E[ \zeta^{n,m}_{r,j}|\g^1_{r-j}]=0$ almost surely, similarly to the argument used in  Remark~\ref{rem-well-defined}.  The next lemma gives a moment estimate for $\vartheta^{n,m}_{r,j,l}$.  

\begin{lem}\label{lem-est-theta}
Let  $\vartheta^{n,m}_{r,j,l}$ be defined in \eqref{def-theta-23} and suppose that  $\beta<\gamma<\beta/p$. Then    there exists  $N\geq 1$ such that  for all 
$n\geq N$, $r=k,\dots,n$, $j\geq 1$ and   $m\geq 0$ we have that 
 \begin{align}\label{est-ss4w5}
  \E[| \vartheta_{r,j,l}^{n,m}|^\gamma]\leq K  
  \begin{cases} j^{(\alpha-k)\beta}l^{(\alpha-k)\beta}   \quad & l\geq m \\
  (m+1)^{(\alpha-k)\beta+1} j^{(\alpha-k)\beta} l^{(\alpha-k)\beta}  \quad & l=j,\dots,m-1.
  \end{cases}
 \end{align}
\end{lem}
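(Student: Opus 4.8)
The plan is to represent $\vartheta^{n,m}_{r,j,l}$ as a conditional expectation of a mixed second difference of $V^{n,m}_r$, to carry out the relevant conditional averages explicitly via the Fourier representation \eqref{xp}, and then to bound the resulting oscillatory integral in $L^\gamma$.

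\emph{Step 1 (reduction to an interaction term).} Put $I_1=(r-j,r-j+1]$ and $I_2=(r-l-1,r-l]$, which are disjoint since $l\ge j$. Let $\mathcal B=\g^1_{r-j}$, let $\mathcal D$ be the $\sigma$-algebra of the increments of $L$ on $I_2$ (so $\g_{r-l}=\g_{r-l-1}\vee\mathcal D$), and $\mathcal H=\g_{r-l-1}$; the three are independent and $\g_{r-l}\subseteq\g_{r-j}$. Repeated conditioning, exactly as in the computations preceding \eqref{eq:283723}, will identify $\vartheta^{n,m}_{r,j,l}$ with the interaction term
\[
\E[V^{n,m}_r\mid\mathcal B\vee\mathcal H\vee\mathcal D]-\E[V^{n,m}_r\mid\mathcal B\vee\mathcal H]-\E[V^{n,m}_r\mid\mathcal H\vee\mathcal D]+\E[V^{n,m}_r\mid\mathcal H],
\]
which, introducing on an extension an independent copy $L'$ of $L$, equals $\E[V^{n,m}_r-\widetilde V^{(1)}-\widetilde V^{(2)}+\widetilde V^{(1,2)}\mid\mathcal B\vee\mathcal H\vee\mathcal D]$, where $\widetilde V^{(1)},\widetilde V^{(2)},\widetilde V^{(1,2)}$ denote $V^{n,m}_r$ with the restriction of $L$ to $I_1$, to $I_2$, and to $I_1\cup I_2$ replaced by $L'$.

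\emph{Step 2 (explicit averaging).} Write $Y^n_r=\xi_1+\xi_2+Z$ with $\xi_i=\int_{I_i}\phi^n_r\,dL$ and $Z$ the integral over the complement of $I_1\cup I_2$, split $Z=Z_{\mathcal H}+Z_G$ according to whether the time argument lies in $(-\infty,r-l-1]$ or in the ``free'' region $F:=(r-l-1,r]\setminus(I_1\cup I_2)$, and set $\rho_G=\|\phi^n_r\|_{L^\beta(F)}$, $\sigma_i=\|\phi^n_r\|_{L^\beta(I_i)}$ (all deterministic). If $l\ge m$ then $Y^{n,m}_r$ does not depend on the $I_2$-block of $L$ (because $I_2\cap[r-m,r]$ is Lebesgue null), so the mixed difference of $|Y^{n,m}_r|^p$ vanishes and only $|Y^n_r|^p$ contributes; inserting \eqref{xp}, using the product-to-sum identity
\[
\cos x-\cos(x-a)-\cos(x-b)+\cos(x-a-b)=-4\sin\tfrac a2\,\sin\tfrac b2\,\cos\!\big(x-\tfrac a2-\tfrac b2\big),
\]
and integrating out the two $L'$-blocks and $Z_G$ — each of which turns a cosine of a linear functional of independent symmetric $\beta$-stable noise into that cosine times an exponential, by \eqref{charfun} — I expect to reach a representation
\[
\vartheta^{n,m}_{r,j,l}=a_p^{-1}\int_{\R}|u|^{-1-p}\,e^{-\rho_G^\beta|u|^\beta}\,B(u)\,du,\qquad |B(u)|\le K\,g_1(u)\,g_2(u),
\]
with $g_i(u)=\min(|u\xi_i|,2)+\min(\sigma_i^\beta|u|^\beta,1)$, where $g_1(u),g_2(u)$ are independent and each $O(|u|)$ near the origin, so the integral converges. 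If instead $l<m$ (hence $j<m$ and $I_1,I_2\subseteq[r-m,r]$), set $T=\int_{-\infty}^{r-m}\phi^n_r\,dL$, which is $\mathcal H$-measurable and untouched by the replacements, so that $V^{n,m}_r=|w+T|^p-|w|^p-\E[\,\cdot\,]$ with $w=Y^{n,m}_r$; one further application of the identity in the variable $uT$ yields the analogous representation with $|B(u)|\le K\,g_1(u)\,g_2(u)\,\min(|uT|,2)$.

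\emph{Step 3 (the $L^\gamma$-bound) and the main obstacle.} By Minkowski's integral inequality, independence of the factors (so $\|g_1(u)g_2(u)\|_{L^\gamma}=\|g_1(u)\|_{L^\gamma}\|g_2(u)\|_{L^\gamma}$, likewise with the $\min(|uT|,2)$-factor), and the elementary moment estimate $\|\min(|u\xi_i|,2)\|_{L^\gamma}\le K\min(|u|\sigma_i,1)^{\beta/\gamma}$ — valid precisely because $\gamma>\beta$ — the problem reduces to a scalar integral; choosing $N$ so large that $\rho_G\ge\varepsilon>0$ uniformly in the admissible parameters (possible since $\|\phi^n_r\|_{L^\beta(F)}$ is bounded below by $\tfrac12$ the corresponding norm of $h_k$ once $\phi^n_r$ is close to $\phi^\infty_r$ on the fixed unit interval contained in $F$, as in Remark~\ref{rem-well-defined}), the factor $e^{-\rho_G^\beta|u|^\beta}$ restricts the integral to a bounded range of $u$, while $\gamma<\beta/p$ (i.e.\ $p\gamma<\beta$) secures integrability near $u=0$ and the degenerate cases below. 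Collecting powers of $\sigma_1,\sigma_2$ (and, when $l<m$, of $\|T\|_{L^\beta}$) one gets $\E[|\vartheta^{n,m}_{r,j,l}|^\gamma]\le K\sigma_1^\beta\sigma_2^\beta$ if $l\ge m$ and $\le K\sigma_1^\beta\sigma_2^\beta\|\phi^n_r\|_{L^\beta((-\infty,r-m))}^\beta$ if $l<m$; the stated bounds follow from $\sigma_i^\beta\le Kj^{(\alpha-k)\beta}$ resp.\ $\le Kl^{(\alpha-k)\beta}$ (by \eqref{eq:72138}) and from $\|\phi^n_r\|_{L^\beta((-\infty,r-m))}^\beta=\sum_{i>m}\|\phi^n_i\|_{L^\beta([0,1])}^\beta\le K\sum_{i>m}i^{(\alpha-k)\beta}\le K(m+1)^{(\alpha-k)\beta+1}$, using $(\alpha-k)\beta<-1$; the finitely many pairs $(j,l)$ with $F=\emptyset$ (so $\rho_G=0$) are handled by the crude pointwise bound $|V^{n,m}_r-\widetilde V^{(1)}-\widetilde V^{(2)}+\widetilde V^{(1,2)}|\le K(|\xi_1-\xi_1'|\wedge|\xi_2-\xi_2'|)^p$, whose $\gamma$-th moment is finite because $p\gamma<\beta$ and which already gives the required constant bound. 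The hard part is Steps~2--3: one must track the mutually independent, heavy-tailed $\beta$-stable increments $\xi_1,\xi_2$ (and $T$) through the mixed second difference of $|\cdot|^p$, and it is only \emph{after} performing the conditional averages — which generate the crucial damping $e^{-\rho_G^\beta|u|^\beta}$ and the $g_i$-structure — that the sharp scaling $\sigma_1^\beta\sigma_2^\beta$ emerges; a naive pointwise bound on the mixed difference followed by taking moments is far too weak, and the interplay of $p<\beta/2$, $\beta<\gamma$ and $\gamma<\beta/p$ is exactly what closes the moment estimates and the oscillatory integral. Once Lemma~\ref{lem-est-theta} is available, the reverse-martingale structure of $(\vartheta^{n,m}_{r,j,l})_{l\ge j}$ built into \eqref{def-theta-23} makes these terms orthogonal in $L^2$, which is how the estimate feeds into Lemma~\ref{eq:8937739} (with $\gamma=2$, admissible since $p<\beta/2$).
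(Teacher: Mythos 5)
Your proposal is correct in its essentials and follows the same strategy as the paper's proof: the same identification of $\vartheta^{n,m}_{r,j,l}$ with the four-term interaction $\E[V|\mathcal B\vee\mathcal H\vee\mathcal D]-\E[V|\mathcal B\vee\mathcal H]-\E[V|\mathcal H\vee\mathcal D]+\E[V|\mathcal H]$ (this is exactly \eqref{eq:62627}), the same independent-copy representation of the conditionings, the same case split ($l\ge m$, where the $|Y^{n,m}_r|^p$-part drops out and one gets a genuine second difference, versus $l<m$, where the $\mathcal H$-measurable tail $T=A^n_l-A^{n,m}_l$ produces a third difference), the same lower bound on the scale of the ``free'' block, and the same use of $p\gamma<\beta<\gamma$ through the moment estimates of Lemma~\ref{sim-est-W} and of \eqref{eq:72138} for the final powers of $j$, $l$ and $m$. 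The difference is purely in how the Fourier analysis is packaged: the paper routes everything through $\Phi_\rho$, writes $\vartheta^{n,m}_{r,j,l}$ as a double/triple iterated integral of $\Phi''_{\rho^n_{j,l}}$ resp.\ $\Phi'''_{\rho^n_{j,l}}$ over the increment variables, and uses the pointwise bounds of Lemma~\ref{lemma-est-phi''} to obtain a pointwise estimate of $|\vartheta^{n,m}_{r,j,l}|$ as a product of independent random factors \emph{before} taking $\gamma$-th moments; you instead keep the Fourier variable $u$ explicit, factor the mixed difference with the product-to-sum identity, and pass to $L^\gamma$ via Minkowski's integral inequality. The one place where this choice has a real consequence is that Minkowski's integral inequality requires $\gamma\ge 1$, so your route as written does not cover $\gamma\in(\beta,1)$ when $\beta<1$, which is part of the stated range of the lemma (it does cover the only case actually used downstream, $\gamma=2$ in Lemma~\ref{eq:8937739}); the paper's pointwise-first organization is precisely what avoids this restriction, and your argument can be repaired in the same way by performing the $u$-integration before taking moments. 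Apart from this, and from the harmless imprecision that for $\beta\ge 1$ the deterministic part of your $g_i(u)$ should read $\min(\sigma_i|u|,1)$ rather than $\min(\sigma_i^\beta|u|^\beta,1)$, the computation is sound.
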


To prove Lemma~\ref{lem-est-theta} we use  the following estimate on $\Phi_{\rho}$ defined in \eqref{def-H-rho}.  

\begin{lem}\label{lemma-est-phi''}
There exists a finite constant $K$ such that for all $\rho\in [\epsilon, \epsilon^{-1}]$, all  $x,y, z\geq 0 $ and all $a\in \R$  we have   that   
\begin{align}
{}&  \int_0^z \int_0^x \int_0^y |\Phi_\rho'''(a+  u_1+u_2+u_3)|\,du_1\,du_2\,du_3
\\ {}& \qquad   \leq K\Big( ( x \wedge 1)( y \wedge 1)(z\1_{\{z\leq 1\}} + z^p\1_{\{z> 1\}}\big)\Big).\label{eq:837636}
\shortintertext{
and }
{}&  \int_0^x \int_0^y |\Phi_\rho''( a+u_1+u_2)|\,du_1\,du_2
\\ {}& \qquad   \leq K\Big( (x\wedge 1)(y\1_{\{y\leq 1\}} + y^p\1_{\{y> 1\}}\big)\Big)\label{eq:83637392}
\end{align}
\end{lem}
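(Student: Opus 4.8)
The plan is to prove both inequalities by reducing multiple integrals of the higher derivatives of $\Phi_\rho$ to integrals of $\Phi_\rho'$, $\Phi_\rho''$, or $\Phi_\rho$ evaluated at endpoints, using the fundamental theorem of calculus, and then invoking the pointwise bounds \eqref{est-H-28217} and \eqref{est-H-inf-2} together with the representation \eqref{rep-H-est-2}. First I would observe that $\Phi_\rho\in C^3(\R)$ by \eqref{rep-H-est-2}, so all the integrands are continuous and Fubini applies freely. For \eqref{eq:83637392}: integrating $\Phi_\rho''$ in $u_1$ from $0$ to $x$ gives $\Phi_\rho'(a+x+u_2)-\Phi_\rho'(a+u_2)$; its absolute value is bounded by $\min\{(x\wedge 1)\cdot\|\Phi_\rho''\|_\infty,\ 2\|\Phi_\rho'\|_\infty\}\leq K(x\wedge 1)$ using the mean value theorem and \eqref{est-H-28217}. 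Then integrating the remaining bound $K(x\wedge 1)$ in $u_2$ from $0$ to $y$ over a region of length $y$ gives $K(x\wedge1)\,y$, which handles the case $y\le 1$. For $y>1$ one instead needs the $y^p$ bound: here I would integrate $\Phi_\rho''$ in $u_1$ first to get the difference $\Phi_\rho'(a+x+u_2)-\Phi_\rho'(a+u_2)$, bound it by $K(x\wedge1)$, but then for the $u_2$-integral use that the antiderivative of $\Phi_\rho'$ is $\Phi_\rho$, so a careful application of \eqref{est-H-inf-2} with $y=0$ (i.e.\ \eqref{hestimate}, giving $|\Phi_\rho(z)|\le K(|z|^p\wedge|z|^2)$, hence $|\Phi_\rho(z)|\le K|z|^p$ for $|z|\ge1$) produces the $y^p$ growth. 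Concretely, after integrating in $u_1$ one can write the double integral as a telescoping combination of four values of $\Phi_\rho$ at points spaced by $x$ and $y$, and applying \eqref{est-H-inf-2} to the $x$-increment (with increment $\le x\wedge1$) and \eqref{hestimate} to control the absolute sizes yields the stated bound $K(x\wedge1)(y\1_{\{y\le1\}}+y^p\1_{\{y>1\}})$.

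For \eqref{eq:837636} I would proceed analogously by integrating $\Phi_\rho'''$ one variable at a time. Integrating in $u_1$ from $0$ to $y$ gives $\Phi_\rho''(a+u_3+u_2+y)-\Phi_\rho''(a+u_3+u_2)$, whose absolute value is $\le K(y\wedge1)$ by the mean value theorem and \eqref{est-H-28217}; integrating this in $u_2$ from $0$ to $x$ gives a bound $K(y\wedge1)(x\wedge1)$ after a second application of the same reasoning (replacing the crude bound $K(y\wedge1)x$ by $K(y\wedge1)(x\wedge1)$ using that $\Phi_\rho''$ itself is bounded, so the $u_2$-difference of $\Phi_\rho''$ differences is $\le 4\|\Phi_\rho''\|_\infty$ as well as $\le Kx\|\Phi_\rho'''\|_\infty$). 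This leaves the outermost $u_3$-integral of a quantity bounded by $K(x\wedge1)(y\wedge1)$ over $[0,z]$: for $z\le1$ this directly gives $K(x\wedge1)(y\wedge1)z$; for $z>1$ one must again upgrade to $z^p$ growth, which requires carrying $\Phi_\rho$ itself (not just its bounded derivatives) through the $u_3$-integration. The cleanest route is to write the whole triple integral, after doing the $u_1$ and $u_2$ integrations, as a fixed linear combination of eight values of $\Phi_\rho'$ at points forming the vertices of a box with side lengths $x,y$ (in two of the directions) and a translation by $u_3$; then integrate that $\Phi_\rho'$-combination in $u_3$ to get a combination of values of $\Phi_\rho$, and bound the box-increment in the $x$ and $y$ directions by $\min\{x\wedge1,\dots\}\cdot\min\{y\wedge1,\dots\}$ via \eqref{est-H-28217} while using \eqref{hestimate} to get the $|z|^p$ factor from the overall size of $\Phi_\rho$ when the $u_3$-span exceeds $1$.

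The main obstacle I anticipate is getting the $z^p$ (respectively $y^p$) behaviour for large $z$ (resp.\ $y$) cleanly: the naive estimate obtained by bounding the integrand by a constant only yields linear growth in $z$, so one genuinely needs to exploit that iterated integration of $\Phi_\rho'$ or $\Phi_\rho''$ reproduces $\Phi_\rho$, and then use the sublinear bound $|\Phi_\rho(z)|\le K|z|^p$ (valid since $p<1$) coming from \eqref{hestimate}. The bookkeeping of signs and evaluation points in the multilinear "finite difference" representation of the iterated integrals is the place where care is required; once that representation is set up, the bounds \eqref{est-H-28217} and \eqref{est-H-inf-2}/\eqref{hestimate} plug in directly. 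I would also note that the constant $K$ may be taken uniform over $\rho\in[\epsilon,\epsilon^{-1}]$ because the bounds \eqref{est-H-28217} and \eqref{est-H-inf-2} are already uniform over $\rho\ge\epsilon$, and the additional upper restriction $\rho\le\epsilon^{-1}$ is not even needed for the argument.
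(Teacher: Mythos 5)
There is a genuine gap: your entire reduction rests on identities such as $\int_0^x\Phi_\rho''(a+u_1+u_2)\,du_1=\Phi_\rho'(a+x+u_2)-\Phi_\rho'(a+u_2)$, but in \eqref{eq:837636} and \eqref{eq:83637392} the absolute value sits \emph{inside} the integrals, so the fundamental theorem of calculus does not apply and the iterated integrals cannot be rewritten as finite differences of $\Phi_\rho$ or of its lower derivatives. This is not cosmetic. Take \eqref{eq:83637392} with $x\le 1<y$: you must show $\int_0^y|\Phi_\rho''(w+u)|\,du\le K y^{p}$ uniformly in $w\in\R$ (a constant bound would also suffice here, since $y^{p}\ge 1$), and the only tools you allow yourself --- the uniform bounds \eqref{est-H-28217} and the increment estimates \eqref{est-H-inf-2}/\eqref{hestimate} --- give only the linear bound $Ky$, which exceeds $Ky^{p}$ because $p<1$. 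The telescoping step that would replace this by $|\Phi_\rho'(w+y)-\Phi_\rho'(w)|\le K$ is exactly what the inner absolute value forbids. What is genuinely needed, and what constitutes the bulk of the paper's proof, is the pointwise decay estimate $|\Phi_\rho^{(v)}(x)|\le K(1\wedge|x|^{p-v})$ of \eqref{eq:38873} (and its consequence \eqref{eq:82652278}), obtained by splitting the Fourier representation \eqref{est-H''} into a gamma-characteristic-function piece computed explicitly and a remainder handled by $v$-fold integration by parts; combined with the elementary but uniform-in-$a$ bound \eqref{eq:12-23}, this yields $\int_0^z|\Phi_\rho^{(v)}(a+u)|\,du\le K(z\1_{\{z\le1\}}+z^{p}\1_{\{z>1\}})$, from which the iterated estimates follow by the dichotomy $x\ge 1$ versus $x<1$. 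None of this machinery appears in your plan, and without it the $y^{p}$, $z^{p}$ and even the $(x\wedge 1)$ factors for $x>1$ are out of reach.

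To be fair, your argument would essentially go through --- and would be shorter than the paper's --- if the claim concerned the signed integrals, i.e.\ with the absolute value outside: the alternating sum of values of $\Phi_\rho$ over the vertices of the box can be bounded by grouping increments, applying \eqref{est-H-28217} to the short sides and \eqref{est-H-inf-2} to the long one. (The paper's own final reduction step is in fact written for the signed integrals, a small looseness there.) But the lemma is invoked in the form \eqref{eq:82363782}, where after replacing the limits of integration by their absolute differences the modulus ends up inside the triple integral, so the inside-absolute-value version is the one that must be proved. A secondary point: the upper restriction $\rho\le\epsilon^{-1}$, which you assert is unnecessary, is in fact used in the paper to keep the constants in the estimates \eqref{eq:26627} and \eqref{eq:723638} uniform in $\rho$.
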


\begin{proof}[Proof of Lemma~\ref{lemma-est-phi''}] 
Throughout the proof $K$ will denote  a finite constant only depending on $\beta,\epsilon$ and $p$, but might change from line to line. 
First we will show that for all $v=1,2,3$, all $a\in \R$ and all $z>0$  we have that 
\begin{equation}\label{eq:est-2273}
 \int_0^z \Phi_\rho^{(v)}(a+u)\,du\leq K (\1_{\{z\leq 1\}} z+\1_{\{z> 1\}}z^p),
\end{equation}
where $\Phi^{(v)}_\rho$ denotes the $v$-th derivative of $\Phi_\rho$.
To this aim we first  show that for $v=1,2,3$ we have that 
\begin{equation}\label{eq:38873}
|\Phi^{(v)}_\rho(x)| \leq K  \big(1\wedge |x|^{p-v}\big)\qquad \text{for all } x\in \R,
\end{equation} 
which, in particular, yields that  
\begin{equation}\label{eq:82652278}
 |\Phi^{(v)}_\rho(x)| \leq K  \big(1\wedge |x|^{p-1}\big)\qquad \text{for all } x\in \R.
\end{equation}
For all $u>0$ we  let 
\begin{equation}
q(u)=  u^{v-1-p}e^{-\rho^\beta u}\qquad \text{and}\qquad 
\psi(u) =  u^{v-1-p}( e^{-\rho^\beta u^\beta}- e^{-\rho^\beta u}) .
\end{equation}
By recalling \eqref{est-H''} we have by the triangle inequality that  
\begin{equation}\label{eq:73736}
|\Phi^{(v)}_\rho(x)|  \leq  2a_p^{-1} \Big( \Big| \int_0^\infty \cos(x u) \psi(u)\,du \Big| +  \Big| \int_0^\infty \cos(x u) q(u)\,du \Big| \Big).
\end{equation}
To estimate the second integral on the right-hand side of \eqref{eq:73736} we note that 
$q(u)\rho^{\beta(v-p)} /\Gamma(v-p)$  is   the density of a gamma distribution with shape parameter $v-p$ and rate parameter $\rho^\beta$. Hence using the expression for the characteristic function for the gamma distribution we get for all $x\neq 0$ that 
\begin{align}\label{eq:78363}
{}& \Big| \int_0^\infty \cos(xu) q(u)\,du \Big| \leq \Big| \int_0^\infty e^{ixu}  q(u)\,du \Big| \\ 
{}&\qquad =  
\frac{\Gamma(v-p)}{\rho^{\beta(v-p)}}
\Big| (1- ix \rho^{-\beta})^{p-v}\Big| = 
\frac{\Gamma(v-p)}{\rho^{\beta(v-p)}}
\left(1+ x^2 \rho^{-2\beta} \right)^{\frac{p-v}{2}} 
 \leq  \Gamma(v-p)|x|^{p-v} .
\end{align}
To estimate the first integral on the right-hand side of \eqref{eq:73736} we set $\zeta(u) = e^{-\rho^\beta u^\beta}-e^{-\rho^\beta u}$ for $u\geq 0$ such that 
$\psi(u) = u^{v-1-p}\zeta(u)$.  For all $j=0,1,2,3$ we obtain the estimates 
\begin{equation}\label{eq:26627}
 |\zeta^{(j)}(u)|\leq \begin{cases}
 K  u^{\beta\wedge 1- j } & u\in (0,1), \\ 
 Ku^2 e^{-\epsilon^\beta u^{\beta\wedge 1}}  \qquad   & u\geq 1,
 \end{cases}
\end{equation}
which implies that 
\begin{equation}\label{eq:723638}
 |\psi^{(j)}(u)| \leq  \begin{cases}
 K u^{\beta\wedge 1- 1-p } & u\in (0,1), \\ 
 Ku^3 e^{-\epsilon^\beta u^{\beta\wedge 1}}  \qquad   & u\geq 1.
 \end{cases}
\end{equation}
Since $p<\beta \wedge 1$ by assumption,  we deduce by \eqref{eq:723638} used on $j=0$ and $j=1$ that  $\lim_{u\to \infty} \psi(u)=\lim_{u\downarrow 0} \psi'(u)=\lim_{u\to \infty} \psi'(\infty) =0$. Hence,  by integration by parts,   we have for all $x>0$ that 
\begin{align}\label{eq:82827}
{}& \Big|\int_0^\infty \cos(xu) \psi(u)\,du\Big|  = 
\begin{cases}
x^{-v}  \Big| \int_0^\infty \cos(xu) \psi^{(v)}(u)\,du \Big| & \quad v \text{ even} \medskip \\
x^{-v}  \Big| \int_0^\infty \sin(xu) \psi^{(v)}(u)\,du \Big| & \quad v \text{ odd}
\end{cases}
\\ {}&\qquad \leq  x^{-v} \int_0^\infty |\psi^{(v)}(u)|\,du\leq K x^{-v},
\end{align}
where the last inequality follows from \eqref{eq:723638} used on $j=v$. 
The  estimates \eqref{eq:73736},  \eqref{eq:78363} and  \eqref{eq:82827} 
imply  \eqref{eq:38873}.

To show \eqref{eq:est-2273} it suffices, cf.\  \eqref{eq:82652278}, to show the estimate  
\begin{align}\label{eq:12-23}
{}& \int_0^z  \Big( 1\wedge | a+u|^{p-1}\Big)\,du\leq K (\1_{\{z\leq 1\}} z+\1_{\{z> 1\}}z^p). 
\end{align}
It is important that the constant $K$ does not depend on $a\in \R$. To show \eqref{eq:12-23} we may and do assume that $z>1$ since the estimate \eqref{eq:12-23}  holds for $z\leq 1$ by dominating the integrand by 1.
We split the integral in three parts
 \begin{align}
{}&  \int_0^z  \Big( 1\wedge | a+u|^{p-1}\Big)\,du = \int_{(-a-1,1-a)\cap [0,z]} 1\,du \\ {}&\qquad  + \int_{(1-a,\infty )\cap [0,z]} (a+u)^{p-1} \,du + \int_{(-\infty,-a-1)\cap [0,z]} (-a-u)^{p-1} \,du.\label{eq:82626}
 \end{align}
Since $p\in (0,1]$ we have by  subadditivity that   $x^p-y^p \leq (x-y)^p$
for all $0\leq y\leq x$. 
Hence 
\begin{align}
\int_{(1-a,\infty )\cap [0,z]} (a+u)^{p-1} \,du ={}&  \1_{\{z\geq 1-a\}}\frac{1}{p} 
\begin{cases}
(a+z)^p-a^p \quad & a\geq 1 \\ 
(a+z)^p -1 & a<1. 
\end{cases}
\\ \leq {}& \1_{\{z\geq 1-a\}}\frac{1}{p} z^p 
\shortintertext{and}
\int_{(-\infty,-a-1)\cap [0,z]} (-a-u)^{p-1} \,du   = {}& \1_{\{ -a-1\geq 0\}}\frac{1}{p}
\begin{cases}
(-a)^p - 1 & -a-1\leq z \\ (-a)^p - (-a-z)^p \quad & z\leq -a -1 
\end{cases}\\  \leq {}& \1_{\{ -a-1\geq 0\}} \frac{1}{p} z^p. 
\end{align}
Thus, by \eqref{eq:82626} we obtain for $z\geq 1$ that  
\begin{equation}
 \int_0^z  \Big( 1\wedge | a+u|^{p-1}\Big)\,du \leq 2 +  \frac{2}{p} z^p\leq 2\left(1+\frac{1}{p}\right)z^p,
\end{equation}
which implies \eqref{eq:12-23}, and completes the proof of \eqref{eq:est-2273}. 

We will now deduce \eqref{eq:837636} from \eqref{eq:est-2273}.
For $x\geq 1$ we have that with $\bar a= a+x$ 
\begin{align}
 {}&  \Big| \int_0^y\int_0^z \int_0^x  \Phi_\rho'''( a+u_1+u_2+u_3)\,du_1\,du_2\,du_3 \Big|
\\  {}& \qquad \leq \Big| \int_0^z \int_0^y \Phi_\rho''( \bar a+u_1+u_2)\,du_1\,du_2\Big| 
+\Big|  \int_0^z \int_0^y \Phi_\rho''( a+u_1+u_2)\,du_1\,du_2\Big| .
\end{align}
For $x<1$ there exists an $\tilde a\in \R$ such that  
\begin{align}
   {}&    \int_0^x\int_0^y\int_0^z  \Phi_\rho'''( a+u_1+u_2+u_3)\,du_2\,du_3\,du_1  
\\  {}& \qquad = x \int_0^y\int_0^z \Phi_\rho'''( \tilde a+u_2+u_3)\,du_2\,du_3.
\end{align}
Repeating this argument shows that for any $\tilde a\in \R$ and $v=2,3$ we have for $y\geq 1$ that 
with $\bar a= \tilde a + y$ 
\begin{align}
 {}& \int_0^y\int_0^z \Phi_\rho^{(v)}( \tilde a+u_2+u_3)\,du_3\,du_2
 \\  {}& \qquad \leq \Big| \int_0^z \Phi_\rho^{(v-1)}( \bar a+u_2)\,du_3\Big| 
+\Big|  \int_0^z  \Phi_\rho^{(v-1)}( \tilde a+u_3)\,du_3\Big|, \label{eq:8272}
\end{align}
and for $y< 1$ there exists  $\bar a\in \R$ such that 
\begin{equation}
 \int_0^y\int_0^z \Phi_\rho^{(v)}( \tilde a+u_2+u_3)\,du_3 \,du_2\leq y \int_0^z \Phi_\rho^{(v)}( \bar a+u_3)\,du_3.\label{eq:297320937}
\end{equation}
By collecting all the terms and using \eqref{eq:est-2273} we obtain \eqref{eq:837636}. Eq.~\eqref{eq:83637392} follows by similar arguments. In this case, we use \eqref{eq:8272} with $v=2$ and conclude the proof by using  \eqref{eq:est-2273}  as above. 
 \end{proof}

We are now ready to prove Lemma~\ref{lem-est-theta}.

\begin{proof}[Proof of Lemma~\ref{lem-est-theta}] 
For fixed $n,m,j,l$, $\{\vartheta^{n,m}_{r,j,l}:r\geq 1\}$ is a  stationary sequence, and hence we may and do assume that $r=1$. Furthermore, we may  assume that $l\geq j\vee 2$, since the case  $l=j=1$  can be covered by choosing a new  constant $K$. 
By definition of $\vartheta^{n,m}_{1,j,l}$ we obtain the representation 
\begin{equation}\label{eq:62627}
\vartheta^{n,m}_{1,j,l} =  \E[ V^{n,m}_r | \g^1_{1-j}\vee \g_{1-l}] - \E[ V^{n,m}_r  |\g_{1-l}] - \E[V^{n,m}_r   | \g_{1-j}^1\vee 
\g_{-l}] + \E[ V^{n,m}_r   | \g_{-l}].
 \end{equation}
Set $\rho_{j,l}^n = \| \phi^n_1 \|_{L^\beta([1-l,1-j]\cup [2-j,1])}$. For large enough $N\geq 1$ there exists $\epsilon>0$ such that $\rho_{j,l}^n\geq \epsilon$ for all $n\geq N, j\geq 1, l\geq j\vee 2$ (we have  $\rho_{j,l}^n=0$ for $l=1$). Hence by \eqref{est-H-28217} there exists a finite constant $K$ such that 
\begin{equation}\label{eq:73698}
|\Phi''_{\rho^n_{j,l}}(x)| \leq K \qquad \text{for all $n\geq N,\, j\geq 1,\, l\geq j\vee 2, \, x\in \R$}. 
\end{equation}
Let 
\begin{equation}
A_{l}^n= \int_{-\infty}^{-l} \phi_{1}^n(s) \,dL_s\qquad \text{and}\qquad   \quad A_{l}^{n,m}= \int_{1-m}^{-l} \phi_{1}^n(s) \,dL_s
\end{equation}
 and    $(\tilde U_{1,-l}^n, \tilde U_{1,1-j}^n)$ denote a random vector, which is independent of $L$, and which equals 
  $(U_{1,-l}^n, U_{1,1-j}^n)$ in law (cf. definition \eqref{udef}). Let  moreover $\tilde \E$ denote the expectation with respect to $(\tilde U_{1,-l}^n, \tilde U_{1,1-j}^n)$ only.  For all  $j=1,\dots, m$ and $l=j,\dots, m-1$ we deduce from  \eqref{eq:62627}   that 
\begin{align}
\vartheta_{1,j,l}^{n,m} ={}&  \tilde \E\Big[  \Phi_{\rho^n_{j,l}}( A_{l}^n +U_{1,-l}^n+U_{1,1-j}^n)- 
\Phi_{ \rho^n_{j,l}}(A_{l}^n+\tilde U_{1,-l}^n+U_{1,1-j}^n)\\ {}&
\phantom{   \tilde \E\Big[ }   -\Phi_{\rho^n_{j,l}}(A_{l}^n+U_{1,-l}^n+\tilde U_{1,1-j}^n)+ \Phi_{\rho^n_{j,l}}(A_{l}^n+\tilde U_{1,-l}^n+\tilde U_{1,1-j}^n) \\
{}& \phantom{   \tilde \E\Big[ }  - \Big( \Phi_{\rho^n_{j,l}}( A_{l}^{n,m} +U_{1,-l}^n+U_{1,1-j}^n)- 
\Phi_{ \rho^n_{j,l}}(A_{l}^{n,m}+\tilde U_{1,-l}^n+U_{1,1-j}^n)\\ {}&
\phantom{   \tilde \E\Big[ - \Big(  }  -\Phi_{\rho^n_{j,l}}(A_{l}^{n,m}+U_{1,-l}^n+\tilde U_{1,1-j}^n)+ \Phi_{\rho^n_{j,l}}(A_{l}^{n,m}+\tilde U_{1,-l}^n+\tilde U_{1,1-j}^n)\Big)\Big] \\ 
\label{eq:45} = {}& \tilde \E\Big[ \int^{A^n_l}_{A^{n,m}_l} \int^{U_{1,1-j}^n}_{\tilde U_{1,1-j}^n} \int^{U^n_{1,-l}}_{\tilde U^n_{1,-l}}  \Phi'''_{\rho^n_{j,l}}(u_1+u_2+u_3)\,du_1\,du_2\,du_3\Big], 
\end{align}
where $\int^x_y$ denotes $-\int^{y}_x$ if $x<y$.
Hence, by substitution, there is a random variable $W^{n,m}_{j,l}$ such that 
\begin{align}\label{eq:82363782}
 {}& |\vartheta_{1,j,l}^{n,m}|  \leq   \\ {}&  \tilde \E\Big[ \int^{|A^n_l-A^{n,m}_l |}_0 \int_0^{|\tilde U_{1,1-j}^n-U_{1,1-j}^n|} \int^{|U^n_{1,-l}- \tilde U^n_{1,-l}|}_0  |\Phi'''_{\rho^n_{j,l}}(W^{n,m}_{j,l}+u_1+u_2+u_3)|\,du_1\,du_2\,du_3\Big].
 \end{align}
For $l> m$ we have that 
\begin{align}
\vartheta_{1,j,l}^{n,m} ={}&  \tilde \E\Big[  \Phi_{\rho^n_{j,l}}( A_{l}^n +U_{1,-l}^n+U_{1,1-j}^n )- 
\Phi_{ \rho^n_{j,l}}(A_{l}^n+\tilde U_{1,-l}^n+U_{1,1-j}^n)\\ {}&
\phantom{   \tilde \E\Big[ }   -\Phi_{\rho^n_{j,l}}(A_{l}^n+U_{1,-l}^n+\tilde U_{1,1-j}^n)+ \Phi_{\rho^n_{j,l}}(A_{l}^n+\tilde U_{1,-l}^n+\tilde U_{1,1-j}^n) \Big]\\ 
\label{eq:45} = {}& \tilde \E\Big[  \int^{U^n_{1,1-j}}_{\tilde U^n_{1,1-j}} \int^{U^n_{1,-l}}_{\tilde U^n_{1,-l}}  \Phi''_{\rho^n_{j,l}}(A^n_l+u_2+u_3)\,du_1\,du_2\Big] .
\end{align}
Let $l=j,\dots,m-1$. By \eqref{eq:82363782} and \eqref{eq:837636} we have that 
\begin{align}
\E[ |\vartheta_{1,j,l}^{n,m}|^\gamma] \leq {}&  K \Big(  \E[ |A^n_l-A_l^{n,m}|^{p\gamma} \1_{\{ |A^n_l-A_l^{n,m}|\geq 1\}}] + \E[ |A^n_l-A_l^{n,m}|^{\gamma} \1_{\{|A^n_l-A_l^{n,m}|\leq 1\}}]  \Big) 
\\ {}& \phantom{ K } \times  \E[\tilde\E[ (|\tilde U^n_{1,1-j}-U^n_{1,1-j}|\wedge 1)^\gamma]] \E[\tilde \E[   (|\tilde U^n_{1,-l}-U^n_{1,-l}|\wedge 1)^\gamma] ]
\\ \leq 
{}&   K \| \phi^n_1 \|_{L^\beta((-\infty,1 -m])}^\beta \| \phi^n_1 \|_{L^\beta([1-j,2-j])}^\beta \| \phi^n_1 \|_{L^\beta([-l,1-l])}^\beta  
\\ \leq {}&   K m^{(\alpha-k)\beta+1} j^{(\alpha-k)\beta} l^{(\alpha-k)\beta}. 
\end{align}
We use Lemma~\ref{sim-est-W}(i) and (ii),   $p\gamma <\beta<\gamma$ and  $|x- y|\wedge 1\leq |x|\wedge 1 + | y | \wedge 1$. 
For $l\geq m$ we have by \eqref{eq:83637392} that  
\begin{align}
 \E[ |  \vartheta_{1,j,l}^{n,m}||^\gamma]  \leq {}& K  \E[  \tilde \E[ ( | U^n_{1,1-j}- \tilde U^n_{1,1-j}| \wedge 1)^\gamma]] 
 \\ {}& \times \Big( \E[ \tilde \E[ | U^n_{1,1-j}- \tilde U^n_{1,1-j}|^{p\gamma}\1_{\{| U^n_{1,1-j}- \tilde U^n_{1,1-j}|\geq 1\}}  ] ]
 \\ {}& \phantom{\times \Big(} + 
 \E[ \tilde \E[ | U^n_{1,1-j}- \tilde U^n_{1,1-j}|^{\gamma}\1_{\{| U^n_{1,1-j}- \tilde U^n_{1,1-j}|\leq  1\}}  ]] \Big)
  \\  {}& \leq K  \| \phi^n_1 \|_{L^\beta([1-j,2-j])}^\beta  \| \phi^n_1 \|_{L^\beta([1-j,2-j])}^\beta \leq K j^{(\alpha-k)\beta}l^{(\alpha-k)\beta}  
\end{align}
again using Lemma~\ref{sim-est-W}(i) and (ii),   $p\gamma <\beta<\gamma$ and  $|x- y|\wedge 1\leq |x|\wedge 1 + | y | \wedge 1$. This completes the proof of \eqref{est-ss4w5}. 
\end{proof}

We are now ready to prove Lemma~\ref{eq:8937739}.

\begin{proof}[Proof of Lemma~\ref{eq:8937739}] 
 We will use  Lemma~\ref{lem-est-theta}  for $\gamma=2$ which satisfies $\beta<\gamma<\beta/p$. Suppose that $j=1,\dots,m$. By orthogonality of $\{\vartheta^{n,m}_{r,j,l }: l=1,2,\dots\}$ in $L^2$   we have that 
 \begin{align}
 \E[ |  \zeta_{r,j}^{n,m}|^2] = {}& \sum_{l=j}^\infty \E[ |  \vartheta_{r,j,l}^{n,m}|^2] =K \left(  \sum_{l=j}^{m-1}
  m^{(\alpha-k)\beta+1}  l^{(\alpha-k)\beta} j^{(\alpha-k)\beta}  + \sum_{l=m}^\infty  l^{(\alpha-k)\beta} 
  j^{(\alpha-k)\beta} \right) 
  \\ \leq {}& K \Big(  (m+1)^{(\alpha-k)\beta+1}   j^{2(\alpha-k)\beta+1} + j^{(\alpha-k)\beta}(m+1)^{(\alpha-k)\beta+1} \Big) 
   \\ \leq {}& K (m+1)^{(\alpha-k)\beta+1} j^{(\alpha-k)\beta}
 \end{align}
 since $2(\alpha-k)\beta+1<(\alpha-k)\beta$. Similarly, for $j>m$ we have that 
 \begin{equation}
 \E[ |  \zeta_{r,j}^{n,m}|^2] =  \sum_{l=j}^\infty \E[ |  \zeta_{r,j}^{n,m}|^2] \leq K j^{(\alpha-k)\beta} \sum_{l=j}^\infty l^{(\alpha-k)\beta}\leq K j^{2(\alpha-k)\beta+1}, 
 \end{equation}
 which completes the proof. 
\end{proof}


\subsection{Proof of Proposition~\ref{prop-key-est}}\label{proof-prop-232}
We will start with the proof of  \eqref{eq:112132-1}.
By rearranging the terms  using  the  substitution $s=r-j$, we have 
\[
\sum_{r=k}^n R^{n,m}_r =  \sum_{s=-\infty}^{n-1} M_{s}^{n,m} \qquad \text{with} \qquad M_{s}^{n,m}:= \sum_{r= 1\vee (s+1)}^n \zeta_{r,r-s}^{n,m}.   
\]
Recalling the definition of $\zeta^{n,m}_{r,j}$ in \eqref{eq-S-S'-2}, we note that $\E[ \zeta^{n,m}_{r,r-s} |\g_s]=0$ for all $s$ and $r$,  showing that 
 that $\{M^{n,m}_s: s\in (-\infty,n)\cap \Z \}$ are  martingale differences.  By orthogonality we 
 have that  
\begin{align}
\E\Big[\Big(\sum_{r=k}^n R^{n,m}_r\Big)^{2}\Big]{}& =  \sum_{s=-\infty}^{n-1} \E[|M_{s}^{n,m}|^{2}] \\
&\leq  \sum_{s=-\infty}^{n-1} \left( \sum_{r= 1\vee (s+1)}^n \E[|\zeta_{r,r-s}^{n,m}|^{2}]^{1/2}  \right)^{2} =:  A_{n,m}.
\label{est-A-n-m}
\end{align}
We split $A_{n,m} = \sum_{s=1}^n+\sum_{s=-n}^0+ \sum_{s=-\infty}^{-n}  = A'_{n,m} + A_{n,m}''+A_{n,m}'''$. 
By the substitution $\tilde s=n-s$ and $\tilde r= r-s$ we obtain
\begin{align}
A_{n,m}' = \sum_{s=1}^n \Big(\sum_{r=1}^s \E[ | \zeta^{n,m}_{r+n-s, r}|^2]^{1/2}\Big)^2.
\end{align}
For $s=1,\dots,n$ we have (cf.\ Lemma~\ref{lem-est-theta})
\begin{align}
 \sum_{r=k}^s \E[ | \zeta^{n,m}_{r+n-s, r}|^2]^{1/2} \leq {}&  K\Big( m^{((\alpha-k)\beta+1)/2} \sum_{r=k}^m r^{(\alpha-k)\beta/2} + \sum_{r=m}^s r^{2(\alpha-k)\beta+1}\Big)
 \\ \leq {}& 
  K\Big( m^{((\alpha-k)\beta+1)/2}(\log(m)+ m^{(\alpha-k)\beta/2+1})  +  m^{2((\alpha-k)\beta+1)}\Big)
  \\ \leq {}& K \Big(m^{((\alpha-k)\beta+1)/2}\log(m)+m^{(\alpha-k)\beta+3/2}\Big) \label{eq:239792739}
\intertext{where we have used the assumption $(\alpha-k)\beta<-1$ in the second inequality. Eq.~\eqref{eq:239792739}  shows  that  }
\label{eq:A-121}
A_{n,m}' \leq {}& K n \Big( m^{(\alpha-k)\beta+1}(\log(m))^2 + m^{2(\alpha-k)\beta +3}\Big).
\end{align}
The substitution $\tilde s=-s$ and $\tilde r=r-s $ together with Lemma~\ref{lem-est-theta}  yields that 
\begin{align}
A_{n,m}'' = {}&  \sum_{s=0}^n \Big( \sum_{r=s+1}^{n+s} \E[ | \zeta^{n,m}_{r+s, r}|^2]^{1/2}\Big)^2 
\leq  K \sum_{s=0}^n \Big( \sum_{r=s+1}^{n+s} r^{(\alpha-k)\beta+1/2}\Big)^2.\label{eq:232355}
\end{align}
For $\alpha<k-2/\beta$ the inner sum on the right-hand side of  \eqref{eq:232355} is summable. 
Thus, we deduce
\begin{align}\label{est-3224234124}
A_{n,m}''\leq K \sum_{s=0}^n  s^{2(\alpha-k)\beta+3}\leq K \Big( n^{2(\alpha-k)\beta+4}+\log(n)\Big).
\end{align}
On the other hand, for  $\alpha\geq k-2/\beta$ we have by Jensen's inequality that 
\begin{align}
A_{n,m}'' \leq {}& K n\sum_{s=0}^n \Big( \sum_{r=s+1}^{n+s} r^{2(\alpha-k)\beta+1}\Big) 
\leq K n \sum_{s=0}^n s^{2(\alpha-k)\beta+2}\leq  K n^{2(\alpha-k)\beta+4} ,\label{eq:A-122}
\end{align}
where we have used  the assumption $(\alpha-k)\beta<-1$ in the second inequality and the fact that  $\alpha>k-\frac{3}{2\beta}$ in the third inequality.   
Again by the substitution $\tilde s=-s$ and $\tilde r=r-s $ and Lemma~\ref{lem-est-theta} we have
\begin{align}
A_{n,m}''' =  {}& \sum_{s=n}^\infty \Big( \sum_{r=s+1}^{n+s} \E[ | \zeta^{n,m}_{r+s, r}|^2]^{1/2}\Big)^2  \leq K
\sum_{s=n}^\infty \Big( \sum_{r=s+1}^{n+s} r^{(\alpha-k)\beta +1/2}\Big)^2   
\\ \leq {}& K \sum_{s=n}^\infty \Big( n  s^{(\alpha-k)\beta +1/2}\Big)^2   \leq K n^{2(\alpha-k)\beta+4},
\label{eq:A-123} 
\end{align}
where we have used the assumption $(\alpha-k)\beta<-1$ in the last inequality. 
Combining the  estimates  \eqref{est-A-n-m}--\eqref{eq:A-123} yields \eqref{eq:112132-1}. 
\noeqref{eq:A-122}\noeqref{est-3224234124}\noeqref{eq:A-121}

In the proof of  \eqref{eq:112132-3} and \eqref{eq:112132-2} we will use the following decomposition 
\begin{equation}\label{decompo-W_n}
 \sum_{r=k}^n  Q^{n,m}_r  = \sum_{s=-\infty}^{n-1}  \sum_{j=(k-s)\vee 1}^{n-s} \E[ V_{s+j}^{n,m}  |\g^{1}_{s}] 
\end{equation}
which follows by  the substitution $s=r-j$. 
To prove \eqref{eq:112132-3}  we assume that $\alpha<k-2/\beta$ and let $\epsilon>0$.  
By \eqref{est-H-rho-1-2} we have for all $p\leq \gamma <\beta/2$ that 
\begin{align}
{}&  \E\Big[\Big| \Phi_{\rho^n_j} (U_{s+j,s}^n)- \Phi_{\rho^{n,m}_j}(U^n_{s+j,s})\Big|^2\Big] \leq \Big| | \rho_j^n|^\beta- |\rho^{n,m}_j|^\beta\Big|^2 \E[ |U_{s+j,s}^n|^{2\gamma} ] 
 \\ {}&\qquad \leq  K \Big| | \rho_j^n|^\beta- |\rho^{n,m}_j|^\beta\Big|^2 j^{(\alpha-k) 2\gamma} \leq K \Big| | \rho_j^n|^\beta- |\rho^{n,m}_j|^\beta\Big|^2 j^{(\alpha-k) \beta+2\epsilon}, \label{eqst-sflsf-2}
\end{align}
where the last inequality holds for $\gamma$ close enough to $\beta/2$. 
We have that 
\begin{align}
 \Big| | \rho_j^n|^\beta- |\rho^{n,m}_j|^\beta\Big| {}& =  \Big| \int_{(-\infty,s+j]\setminus [s,s+1]} | \phi^n_{s+j}(u)|^\beta\,du- \int_{(-s+j-m,s+j]\setminus [s,s+1]} | \phi^n_{s+j}(u)|^\beta\,du\Big| 
 \\ {}& \leq   \int_{-\infty}^{-m}  | \phi^n_{0}(u)|^\beta\,du\leq m^{(\alpha-k)\beta+1} . \label{eqst-sflsf}
\end{align}
By recalling the identity  \eqref{eq:1212124432} we  have
\begin{align}
{}&   \Big\| \E[ V_{s+j}^{n,m}  |\g^{1}_s] \Big\|_{L^2}   \leq 2  
\Big\| \Phi_{\rho^n_j} (U_{s+j,s}^n)- \Phi_{\rho^{n,m}_j}(U^n_{s+j,s})\Big\|_{L^2} \\ {}& \leq  K \begin{cases}
m^{(\alpha-k)\beta+1}  j^{(\alpha-k)\beta/2+\epsilon} & j=1,\dots,m \\ 
  j^{(\alpha-k)\beta/2+\epsilon} & j>m
  \end{cases}
\label{eq:725673489}
\end{align}
where the last inequality follows from \eqref{eqst-sflsf-2} and \eqref{eqst-sflsf}. 
By orthogonality in $L^2$ of the inner sums on the right-hand side of \eqref{decompo-W_n} we have that 
\begin{align}
{}& \E\Big[ \Big(\sum_{r=k}^n  Q^{n,m}_r\Big)^2\Big] =   \sum_{s=-\infty}^{n-1} \E\Big[\Big( \sum_{j=(k-s)\vee 1}^{n-s} \E[ V_{s+j}^{n,m}  |\g^{1}_{s}]\Big)^2\Big]
\\ {}&\quad \leq   \sum_{s=-\infty}^{n-1} \Big( \sum_{j=(k-s)\vee 1}^{n-s} \Big\| \E[ V_{s+j}^{n,m}  |\g^{1}_{s}]\Big\|_{L^2} \Big)^2
\\ {}&\quad
=   K \Big[ \sum_{s=-\infty}^{k-1} \Big( \sum_{j=k-s}^{n-s} \Big\| \E[ V_{s+j}^{n,m}  |\g^{1}_{s}]\Big\|_{L^2} \Big)^2\Big)^2+\sum_{s=k}^{n-1} \Big( \sum_{j=  1}^{n-s} \Big\| \E[ V_{s+j}^{n,m}  |\g^{1}_{s}]\Big\|_{L^2} \Big)^2\Big)^2\Big]
\\ {}&\quad
=:   K[A_{n,m}'+A_{n,m}''].\label{eq:232323543}
\end{align}
  By \eqref{eq:725673489} we obtain the following estimate on  $A_{n,m}'$: 
\begin{align}
{}& A_{n,m}'\leq  K \sum_{s=-\infty}^{k-1} \Big( \sum_{j=k-s}^{n-s} j^{(\alpha-k)\beta/2+\epsilon}\Big)^2
\\ {} &\qquad = K\Big(  \sum_{s=-k+1}^n \Big( \sum_{j=k+s}^{n+s} j^{(\alpha-k)\beta/2+\epsilon}\Big)^2+  \sum_{s=n+1}^\infty \Big( \sum_{j=k+s}^{n+s} j^{(\alpha-k)\beta/2+\epsilon}\Big)^2 \Big) \\ {}& \qquad =:  K( B_n+C_n) .\label{eq:237232}
\end{align}
Since  $(\alpha-k)\beta<-2$ we obtain the estimate
\begin{align}\label{eq:2323-1}
B_n\leq {}& K \sum_{s=-k+1}^n s^{(\alpha-k)\beta+2+2\epsilon}\leq 
K  (n^{(\alpha-k)\beta+3+2\epsilon}+1),
\shortintertext{
and by using  $(\alpha-k)\beta<-1$ we get  }\label{eq:2323-2}
C_n\leq {}& K  \sum_{s=n+1}^\infty n^{2} s^{(\alpha-k)\beta+2\epsilon}
\leq K n^{(\alpha-k)\beta +3+2\epsilon}.
\end{align}
 By the substitution $\tilde s=n-s$ and \eqref{eq:725673489} we have 
\begin{align}
A_{n,m}'' {}& =     \sum_{s=1}^{n-k-1}  \Big(\sum_{j=1}^{s} \Big\| \E[ V_{n+s+j}^{n,m}  |\g^{1}_{n+s}] \Big\|_{L^2}\Big)^2\\ 
{} &\leq  \sum_{s=1}^{n-1}  \Big(m^{(\alpha-k)\beta+1}
 \sum_{j=1}^{m} j^{(\alpha-k)\beta/2+\epsilon} + \sum_{j=m+1}^s j^{(\alpha-k)\beta/2+\epsilon}  \Big)^2
 \\ {}& \leq 
 n \Big( m^{2((\alpha-k)\beta+1)}+  m^{(\alpha-k)\beta+2+2\epsilon} \Big)\leq n m^{(\alpha-k)\beta+2+2\epsilon}  \label{eq:7826628}
\end{align}
where the  last inequality follows by the  assumption $(\alpha-k)\beta<-2$. The above estimates 
\eqref{eq:232323543}--\eqref{eq:7826628} yield \eqref{eq:112132-3}. 

\noeqref{eq:237232}  \noeqref{eq:2323-1}  \noeqref{eq:2323-2}

To prove \eqref{eq:112132-2} we suppose that $\alpha>k-2/\beta$.  We will again use the decomposition \eqref{decom-Q-232}, which by the decomposition $\sum_{s=-\infty}^{n-1} = \sum_{s=-\infty}^{k-1}+ \sum_{s=k}^{n-1}$ gives   
\begin{equation}\label{decom-Q-232}
\sum_{r=k}^n \Big(Q^{n,0}_r-Z_r\Big) =  H_{n}^{(1)} - H_{n}^{(2)} + H_{n}^{(3)},
\end{equation}
where
\begin{align*}
{}& H_{n}^{(1)} =  \sum_{s=-\infty}^{k-1} \sum_{j=k-s}^{n-s} \E[ V_{s+j}^{n,0}  |\g^{1}_{s}] , \qquad \quad  H_{n}^{(2)} =   \sum_{s=k}^{n}  \sum_{j=n-s+1}^\infty \Big\{\Phi_{\rho_j^\infty} ( U_{j,r}^\infty)-\E[\Phi_{\rho_j^\infty} (U_{j,r}^\infty)]\Big\}, \\
{}& H_{n}^{(3)} =    \sum_{s=k}^{n-1}  \sum_{j=1}^{n-s}\Big( \E[ V_{s+j}^{n,0}  |\g^{1}_{s}] - \Big\{\Phi_{\rho_j^\infty} ( U_{j,r}^\infty)-\E[\Phi_{\rho_j^\infty} (U_{j,r}^\infty)]\Big\}\Big). 
\end{align*}
%
%
%
 In the following we will show that for all $\epsilon>0$  there exists a finite constant $K$ such all  $i=1,2,3$ and $n\geq 1$ we have 
\begin{equation}\label{eq:726251}
\E[ | H^{(i)}_n|]\leq K  \Big( n^{(\alpha-k)\beta+2+\epsilon} + n^{1-\beta+\epsilon}\Big),
\end{equation}
which by \eqref{decom-Q-232} yields  \eqref{eq:112132-2}.
To be used in the proof of \eqref{eq:726251} we recall that  according to \eqref{eq:1212124432} we have  
\begin{equation}\label{eq:23097673422}
 \E[ V_{s+j}^{n,0}  |\g^{1}_s] = \Phi_{\rho^n_j}(U^n_{s+j,s})- \E[ \Phi_{\rho^n_j}(U^n_{s+j,s})].
\end{equation}
For all $\gamma\in (p,\beta)$ such that $-2<(\alpha-k)\gamma<-1$ we have by \eqref{eq:23097673422} that 
\begin{align}
{}& \E[ | H^{(1)}_n|]  \leq 2 \sum_{s=-\infty}^{k-1} \sum_{j=k-s}^{n-s} \E[ | \Phi_{\rho^n_j}(U^n_{s+j,s})|] 
\leq K \sum_{s=-\infty}^{k-1}  \sum_{j=k-s}^{n-s} \E[ | U^n_{s+j,s}|^\gamma] \\
{} & \leq K \sum_{s=-k+1}^\infty  \sum_{j=k+s}^{n+s} j^{(\alpha-k)\gamma}=  K\Big(  \sum_{s=-k+1}^n   \sum_{j=k+s}^{n+s} j^{(\alpha-k)\gamma} + \sum_{s=n=1}^\infty   \sum_{j=k+s}^{n+s} j^{(\alpha-k)\gamma}\Big) \\
{}& \leq K \Big(  \sum_{s=-k+1}^n  s^{(\alpha-k)\gamma+1} + \sum_{s=n+1}^\infty n s^{(\alpha-k)\gamma} \Big) 
\leq K n^{(\alpha-k)\gamma +2}, \label{eq:12301802812}
\end{align}
where the second inequality follows by \eqref{est-H-beta}, the third inequality follows by  \eqref{eq:72138}, 
the fourth inequality follows by $(\alpha-k)\gamma<-1$,  and the last inequality follows by $(\alpha-k)\gamma+1>-1$. 
Similarly, we have for  all $\gamma\in (p,\beta)$ with  $-2<(\alpha-k)\gamma<-1$ that
\begin{align}
 \E[ | H_{n}^{(2)}|] {}& \leq   2\sum_{s=k}^{n}  \sum_{j=n-s+1}^\infty \E[ |\Phi_{\rho_j^\infty} ( U_{j+s,s}^\infty) |] \leq   K \sum_{s=0}^{n-k}  \sum_{j=s+1}^\infty \E[ | U_{j+n-s,n-s}^\infty) |^\gamma] \\ 
{}& \leq   K \sum_{s=0}^{n-k}  \sum_{j=s+1}^\infty j^{(\alpha-k)\gamma}\leq K  \sum_{s=1}^{n-1}  s^{(\alpha-k)\gamma+1}\leq K n^{(\alpha-k)\gamma +2}.\label{eq:H-2fjlsjf}
\end{align}
We will need more involved estimates on $H_n^{(3)}$. To this end we start with the following simple inequality 
\begin{align}
\E[ | H^{(3)}_n| ] {}& \leq 2 \sum_{s=k}^{n-1} \sum_{j=1}^{n-s} \E[ | \Phi_{\rho^n_j}(U^n_{s+j,s}) -  \Phi_{\rho_j^\infty} ( U_{s+j,s}^\infty)|] \\
{}& \leq 2 n  \sum_{j=1}^{n} \E[ | \Phi_{\rho^n_j}(U^n_{j,0}) -  \Phi_{\rho_j^\infty} (U_{j,0}^\infty)|] .\label{est-H^3-slf}
\end{align}
By adding and subtracting $\Phi_{\rho_j^n}(U^\infty_{j,0})$ we get the decomposition
\begin{align}\label{decom-=sflj}
 \Phi_{\rho^n_j}(U^n_{j,0}) -  \Phi_{\rho_j^\infty} ( U_{j,0}^\infty) = {}& C^n_j+ D^n_j
 \shortintertext{where} 
 C^n_j =\Phi_{\rho_j^n}(U^n_{j,0}) -   \Phi_{\rho_j^n} ( U_{j,0}^\infty) \qquad{}& \text{and}\qquad  D^n_j =  \Phi_{\rho_j^n} ( U^\infty_{j,0}) - \Phi_{\rho_j^\infty}(U^\infty_{j,0}).
\end{align}
In the following we will show that for all $\epsilon>0$ we have that 
 \begin{equation}\label{eq:7239723}
(a)\!:  \sum_{j=1}^n \E[ |C^n_j|]\leq K \Big(
   n^{  (\alpha-k)\beta+1+\epsilon}+n^{-\beta+\epsilon}\Big) \qquad \qquad (b)\!: 
\sum_{j=1}^n \E[ |D^n_j|] \leq K n^{(\alpha-k)\beta+1}.
\end{equation}
To prove \eqref{eq:7239723}(a) we note that   $g_n(s)=n^\alpha g(s/n)$ and $g(s)=s^\alpha f(s)$ we have for $s\geq 0$ that  
\begin{equation}\label{eq:83733}
 \eta_n(s):= g_n(s)-s^{\alpha}= n^{\alpha} (s/n)^{\alpha} \{ f(s/n) -f(0)\} =n^{\alpha} \psi_1(s/n) \psi_2(s/n)
\end{equation}
where  $\psi_1(s)=s^\alpha$ and $\psi_2(s)= f(s)- f(0)$ for $s\geq 0$. 
For all $s>k$ there exists, as a consequence of the mean-value theorem, a  $\xi^n_s\in [s-k,s]$ such that 
\begin{equation}\label{eq:82627}
 (D^k \eta_n)(s) = \eta_n^{(k)}(\xi^n_s)= n^{\alpha -k}\sum_{l=0}^k \binom{k}{l} \psi_1^{(l)}(\xi_s^n/n)
\psi_2^{(k-l)}(\xi_s^n/n). 
\end{equation}
Eq.~\eqref{eq:82627}   implies that 
\begin{equation}\label{eq:8366483}
  |(D^k \eta_n)(s)|\leq K\Big[\Big(\sum_{l=0}^{k-1} n^{l-k} | \xi^n_s|^{\alpha-l}\Big)+ |\xi^n_s|^{\alpha-k+1} n^{-1}\Big],
\end{equation} 
where we have used that $\psi_1^{(l)}(t)=\alpha (\alpha-1)\cdots (\alpha - l+1) t^{\alpha- l}$ for $t>0$, that $\psi^{(l)}_2$ is bounded  on $(0,\infty)$ for $l=1,\dots,k$, and that $| \psi_2(t)|\leq K t$ for all $t>0$. 
Since $\phi^n_j(s)-\phi^\infty_j(s) = D^k \eta_n(j-s)$ we obtain by \eqref{eq:8366483} the estimate 
\begin{align}\label{est-a-j}
{}& \| \phi^n_j-\phi^\infty_j \|_{L^\beta([0,1])} \leq K \sum_{l=0}^{k}a_{l,j,n}\qquad 
\text{where  }\\
a_{l,j,n}=  n^{l-k}  j^{\alpha-l}{}&  \quad \text{for } l=0,\dots,k-1,\qquad \text{and}\qquad  a_{k,j,n} = 
n^{-1} j^{\alpha-k+1} .
\end{align}
We note that $a_{k-1,j,n}= a_{k,j,n}$, and for all $l=0,\dots,k-1$ and $j=1,\dots,n$ we have $a_{l,j,n}= (n/j)^{l} n^{-k} j^\alpha\leq (n/j)^{k-1} n^{-k} j^\alpha = n^{-1} j^{\alpha-k+1}$, which by \eqref{est-a-j} shows that 
\begin{equation}\label{eqasfljsf}
 \| \phi^n_j-\phi^\infty_j \|_{L^\beta([0,1])} \leq K n^{-1} j^{\alpha-k+1} \qquad \quad j=1,\dots,n. 
\end{equation}
First we suppose  that $\beta>1$. For all $\tilde \epsilon>0$ we have according to \eqref{est-234l1j32lh3} of Lemma~\ref{est-H-inf},   \eqref{eq:72138} and \eqref{eqasfljsf} that 
\begin{align}\label{eqlrjslfdjlsf}
 \sum_{j=1}^n \E[ |C^n_j|]\leq {}& K \sum_{j=1}^n \Big(
  j^{(\alpha-k)(\beta-1-\tilde \epsilon)}(n^{-1}j^{\alpha-k+1}) +(n^{-1} j^{\alpha-k+1})^{\beta} \Big)\\
 =  {}&  K \Big( n^{-1} \sum_{j=1}^n j^{(\alpha-k)\beta+1-\tilde \epsilon (\alpha-k)}  +n^{-\beta}\sum_{j=1}^n  j^{(\alpha-k+1)\beta} \Big)
 \\ \leq   {}&  K \Big( n^{-1+(\alpha-k)\beta+2-\tilde \epsilon (\alpha-k)}  +n^{-\beta+(\alpha-k)\beta+\beta+1}\Big), \label{eq:232432}
\end{align}
where we have used that $(\alpha-k)\beta>-2$ and $\beta>1$ in the second inequality. Eq.~\eqref{eq:232432}  shows \eqref{eq:7239723}(a) by choosing $\tilde\epsilon$ close enough to zero. 
 On the other hand, suppose that $\beta\leq 1$.   For all $\tilde \epsilon>0$ we have according to \eqref{eq:97236554340} of Lemma~\ref{est-H-inf} and \eqref{eqasfljsf} that 
\begin{align}\label{eqlrjslfdjlsf-2}
 \sum_{j=1}^n \E[ |C^n_j|]\leq {}& K \sum_{j=1}^n  (n^{-1}j^{\alpha-k+1})^{\beta-\tilde \epsilon} = K n^{-\beta+\tilde \epsilon} \sum_{j=1}^n j^{(\alpha-k+1)(\beta-\tilde \epsilon)}.
\end{align} 
For $(\alpha-k+1)\beta>-1$ and $\tilde \epsilon$ chosen small enough,  \eqref{eqlrjslfdjlsf-2} implies that  
\begin{equation}
 \sum_{j=1}^n \E[ |C^n_j|]\leq K n^{(\alpha-k)\beta+1+\epsilon},
\end{equation}
 which shows \eqref{eq:7239723}(a). When $(\alpha-k+1)\beta\leq -1$ then the sum on the right-hand side of \eqref{eqlrjslfdjlsf-2} converges and we obtain the estimate
\begin{equation}
  \sum_{j=1}^n \E[ |C^n_j|]\leq K n^{-\beta+\tilde \epsilon}, 
\end{equation}
which completes the proof of \eqref{eq:7239723}(a).

To show \eqref{eq:7239723}(b) we use  
%
%
%
Lemma~\ref{helplem1} to obtain 
\begin{equation}
\Big | |\rho_{j}^n|^\beta-   |\rho_j^\infty |^\beta\Big|  \leq  \Big| \| \phi^n_j \|_{L^\beta(\R)}^\beta- \| \phi_j^\infty \|_{L^\beta(\R)}^\beta \Big| 
 \leq K  n^{(\alpha-k)\beta+1} .\label{eq:63734}
\end{equation} 
For any $\gamma\in (p,\beta)$ such that $(\alpha-k)\gamma <-1$ we have  
\begin{align}
\E[  | D^n_j | ] = {}&  \E[  | \Phi_{\rho^n_j}( U_{j,0}^n)-\Phi_{ \rho_j^\infty}(U_{j,0}^n)| ] 
\leq K  \Big| |\rho^n_j |^\beta-  | \rho^\infty_j |^\beta \Big|  \E[ | U_{j,0}^n |^\gamma   ]  
\\ \leq {}& K  \Big| |\rho^n_j |^\beta-  | \rho^\infty_j |^\beta \Big|  \| \phi^n_j \|_{L^\beta([0,1])}^\gamma \leq K n^{(\alpha-k)\beta+1} j^{(\alpha-k)\gamma},
\end{align}
where the first inequality follows by  \eqref{est-H-rho-1-2}, the second inequality follows by 
 \eqref{eq:72138} and the last inequality follows by \eqref{eq:63734}. 
Since $(\alpha-k)\gamma<-1$, the estimate \eqref{eq:7239723}(b) follows. 

The estimates 
\eqref{est-H^3-slf} and \eqref{eq:7239723} yields that 
\begin{equation}\label{est-Q^n_r}
 \E[|H^{(3)}_n|]  \leq 2n   \sum_{j=1}^{n} \Big( \E[ | C^n_j|] + \E[|D^n_j|]\Big) \leq K  \Big( n^{(\alpha-k)\beta+2}+ n^{1-\beta+\epsilon}\Big) ,
\end{equation}
and completes the proof of the proposition. \qed

\subsection{Proof of Lemma~\ref{helplem1}}\label{help-lemma1}
We have  that $f(x)=g(x)x^{-\alpha}$ for $x>0$, $f(0)=1$ and $f$ is assumed to be right differentiable at zero, and hence we  may and do extend $f$ to a differentiable function from $\R$ which also will be denoted $f$. We recall the notation from \eqref{eq:23412343}: 
\begin{align*} 
\phi_{j}^n(s)= D^kg_n(j-s),\qquad g_n(x)=n^{\alpha} g(x/n), \qquad \phi_j^\infty(u)=h_k(j-u).
\end{align*}
By substitution we have that 
\begin{align*}
\|\phi^n_j\|_{L^\beta(\R)}^{\beta} = \int_0^{\infty} |D^kg_n(x)|^{\beta}\, dx 
\qquad \text{and} \qquad \|\phi^{\infty}_j\|_{L^\beta(\R)}^{\beta} = 
\int_0^{\infty} |h_k(x)|^{\beta} \,dx. 
\end{align*}
From Lemma \ref{helplem} and condition $\alpha < k- 1/\beta$ we obtain for all $n\geq 1$ that 
\begin{align} \label{est1}
A_n:=\int_{n}^{\infty} |h_{k} (x)|^{\beta}\, dx \leq K  \int_{n}^{\infty} x^{(\alpha -k)\beta} \,dx 
\leq K n^{(\alpha -k)\beta+1}.
\end{align}
The same estimate holds for the quantity $ \int_n^{\infty} |D^kg_n(x)|^{\beta}\, dx$.
On the other hand, we have that 
\begin{align} \label{est2}
B_n:=\left| \int_{0}^{k} |D^kg_n(x)|^{\beta}\, dx  - \int_{0}^{k} |h_k(x)|^{\beta}\, dx \right|
\leq K n^{-1}.
\end{align}
This follows by the estimate $| |x|^\beta - |y|^\beta|\leq K \max\{ |x|^{\beta-1},|y|^{\beta-1}\} |x-y|$ for all $x,y>0$, and that for all  $x\in [0,k]$ we have by differentiability of $f$ at zero that 
\begin{equation}
| D^kg_n(x)- h_{k}(x)| \leq Kn^{-1}x^{\alpha}.
\end{equation}
Recalling that $g(x)=x^{\alpha}_{+} f(x)$ and using $k$th order Taylor expansion of $f$ at $x$, we deduce the following identity
\begin{align*}
D^kg_n(x)={}&  n^{\alpha} \sum_{j=0}^k (-1)^j \binom{k}{j} g\big((x-j)/n\big) \\
= {}& \sum_{j=0}^k (-1)^j \binom{k}{j} \big(x-j\big)_{+}^{\alpha}
\left( \sum_{l=0}^{k-1} \frac{f^{(l)}(x/n)}{l!} (-j/n)^l + \frac{f^{(k)}(\xi_{j,x})}{k!} (-j/n)^k \right) \\
={}&  \sum_{l=0}^{k-1} \frac{f^{(l)}(x/n)}{l!} \left( \sum_{j=0}^k (-1)^j \binom{k}{j} (-j/n)^l \big(x -j\big)_{+}^{\alpha} \right) \\
{}&+ \left( \sum_{j=0}^k  \frac{f^{(k)}(\xi_{j,x})}{k!} (-1)^j \binom{k}{j} (-j/n)^k \big(x-j\big)_{+}^{\alpha} \right),  
\end{align*}
where $\xi_{j,x}$ is a certain intermediate point. Now, by rearranging terms we can find coefficients $\lambda_0^n,\cdots,\lambda_k^n:[k,n] \to \R$
and $\tilde{\lambda}_0^n,\cdots,\tilde{\lambda}_k^n :[k,n] \to \R$
(which are in fact bounded functions in $x$ uniformly in $n$)
such that  
\begin{align*}
D^kg_n(x)&= \sum_{l=0}^k \lambda_l^n(x) n^{-l} \left( \sum_{j=l}^k (-1)^j \binom{k}{j} j(j-1) \cdots (j-l+1) \big(x-j\big)_{+}^{\alpha} \right) \\
&= \sum_{l=0}^k \tilde{\lambda}_l^n(x) n^{-l} \left( \sum_{j=l}^k (-1)^j \binom{k-l}{j-l}  \big(x-j\big)_{+}^{\alpha} \right) 
=: \sum_{l=0}^k r_{l,n} (x). 
\end{align*}
At this stage we remark that the term $r_{l,n} (x)$ involves $(k-l)$th order differences of the function 
$x_{+}^\alpha$  and $\lambda_0^n(x)=\tilde{\lambda}_0^n(x)=f(x/n)$. 
Now, observe that 
\begin{align*}
 C_n:={}& 
\int_{k}^{n} \left| |D^kg_n(x)|^{\beta} - |h_{k}(x)|^{\beta} \right|  \,
dx \\
 \leq {}&  K\int_{k}^{n} \max\{|D^kg_n(x)|^{\beta-1}, |h_{k}(x)|^{\beta-1}\} |D^kg_n(x)- h_{k}(x)| \,dx.
\end{align*}
Since $r_{0,n}(x)=f(x/n) h_{k}(x)$ and $f(0)=1$,  it holds  that 
\begin{equation}
|r_{0,n}(x) - h_{k}(x) | \leq K  (x/n)|h_k(x)|.
\end{equation}
We deduce that 
\begin{align}
&\int_{k}^{n} \max\{|D^kg_n(x)|^{\beta-1}, |h_{k}(x)|^{\beta-1}\} |r_{0,n}(x) - h_{k}(x)| \,dx 
 \leq K n^{-1} \int_{k}^{n} x^{(\alpha-k) \beta +1}\, dx \\
\label{est4} & \leq K
\begin{cases}
n^{-1} & \text{when } \alpha \in (0,k-2/\beta) \\
n^{(\alpha-k) \beta+1} & \text{when } \alpha \in (k-2/\beta, k-1/\beta) 
\end{cases}
\end{align}  
For $1\leq l\leq k$,  we readily obtain the approximation
\begin{align*} 
\int_{k}^{n} \max\{|D^kg_n(x)|^{\beta-1}, |h_{k}(x)|^{\beta-1}\} |r_{l,n}(x)| \,dx 
\leq K n^{-l} \int_{k}^{n} x^{(\alpha -k)\beta  +l } \,dx. 
\end{align*}
If $\alpha \in (k-2/\beta, k-1/\beta) $, then 
$(\alpha -k)\beta  +l > -1$ and we have 
\begin{align} \label{est3a} 
\int_{k}^{n} x^{(\alpha -k)\beta  +l } \,dx \leq Kn^{(\alpha -k)\beta  +l+1}.
\end{align}
When $\alpha \in (0,k-2/\beta)$
it holds that 
\begin{align} \label{est3}
 \int_{k}^{n} x^{(\alpha -k)\beta  +l } \,dx \leq 
\begin{cases}
K & (\alpha -k)\beta  +l < -1 \\
K \log(n)n^{(\alpha -k)\beta  +l+1} & (\alpha -k)\beta  +l \geq -1.
\end{cases}
\end{align}
By \eqref{est4}, \eqref{est3a} and \eqref{est3} we conclude that  
\begin{align*} 
C_n
\leq K
\begin{cases}
 n^{-1} & \text{when } \alpha \in (0,k-2/\beta) \\
n^{(\alpha -k) \beta+1} & \text{when } \alpha \in (k-2/\beta, k-1/\beta) 
\end{cases}
\end{align*}
Since $(\alpha -k) \beta+1<-1$ if and only if $\alpha < k-2/\beta$, the result readily follows from \eqref{est1} and \eqref{est2}. 
\qed

\section*{Acknowledgements}
We would like to thank Donatas Surgailis, who helped us with the proof  of
Theorem~\ref{sec-order}(i).

\bibliographystyle{chicago}
 
\end{document}